\documentclass[a4paper,reqno,11pt,oneside]{amsart}
\usepackage{graphicx} 
\usepackage{amsmath,geometry}
\usepackage{graphicx}
\usepackage{mathrsfs}
\usepackage{amssymb,amsmath, amsfonts, amsthm}
\usepackage{latexsym}
\usepackage{color} 
\usepackage[dvips]{epsfig}
\usepackage[colorlinks]{hyperref}
\usepackage{bm}

\numberwithin{equation}{section}

\renewcommand{\(}{\left(}
\renewcommand{\)}{\right)}
\newcommand{\beq}{\begin{equation}}
	\newcommand{\eeq}{\end{equation}}
\newcommand{\ba}{\begin{aligned}}
	\newcommand{\ea}{\end{aligned}}

\renewcommand{\d }{\delta }

\renewcommand{\l }{\lambda}

\renewcommand{\O}{\Omega}

\newcommand{\D}{{\mathfrak{D}}}

\newcommand{\Ui}{\U_i}
\newcommand{\Uh}{\U_h}
\newcommand{\Wd}{{W}_{\bm\delta,\bm\xi}}

\newcommand{\E}{\mathcal{E}}
\renewcommand{\H}{{H^1_0(\Om)}}

\definecolor{darkblue}{rgb}{0.05, .05, .65}
\definecolor{darkgreen}{rgb}{0.1, .65, .1}
\definecolor{darkred}{rgb}{0.8,0,0}

\theoremstyle{plain}
\newtheorem{thm}{Theorem}[section]
\newtheorem{lem}[thm]{Lemma}
\newtheorem{prop}[thm]{Proposition}

\numberwithin{equation}{section}
\theoremstyle{remark}
\newtheorem{oss}{Remark}[section]

\DeclareMathOperator{\dist}{dist}
\DeclareMathOperator{\supp}{supp}
\DeclareMathOperator{\vspan}{span}

\renewcommand{\(}{\left(}
\renewcommand{\)}{\right)}

\renewcommand{\O}{\mathcal O}

\newcommand{\eps}{\varepsilon}
\newcommand{\N}{\mathcal N_{\bm\de,\bm\xi}}
\newcommand{\Om}{\Omega}

\renewcommand{\r}{\rangle}
\renewcommand{\l}{\langle}
\renewcommand{\leq}{\leqslant}
\newcommand{\R}{{\mathbb{R}^N}}
\newcommand{\RR}{{\mathbb{R}}}

\renewcommand{\L}{\mathcal{L}}
\newcommand{\U}{\mathcal U}

\newcommand{\e}{\varepsilon}
\newcommand{\de}{\delta}
\newcommand{\la}{\lambda}
\newcommand{\al}{\alpha}
\renewcommand{\i}{ i^\star}

\begin{document}

\title[Nodal cluster solutions for the Brezis-Nirenberg problem]{Nodal cluster solutions for the Brezis-Nirenberg problem in dimensions $N\geq 7$}

\author[M. Musso]{Monica Musso}
\address{M. Musso - Department of Mathematical Sciences, University of Bath, BA2 7AY, UK.}
\email{mm2683@bath.ac.uk}

\author[S. Rocci]{Serena Rocci}
\address{S. Rocci - Dipartimento di Scienze di Base e Applicate per l’Ingegneria,
Sapienza Università di Roma, Via Antonio Scarpa 10, 00161 Roma (Italy) }
\email{serena.rocci@uniroma1.it}

\author[G. Vaira]{Giusi Vaira}
\address{G. Vaira - Dipartimento di Matematica, Università degli Studi di Bari Aldo Moro, Via Orabona 4, 70125 Bari (Italy) }
\email{giusi.vaira@uniba.it}

 \begin{abstract}
     We show that the classical Brezis-Nirenberg problem
     $$
     \Delta u + |u|^{4 \over N-2} u + \varepsilon u = 0 ,\quad {\mbox {in}} \quad \Omega, \quad u= 0 ,  \quad {\mbox {on}} \quad \partial \Omega$$
     admits  nodal solutions  clustering around a point on the boundary of $\Omega$ as $\varepsilon \to 0$, for smooth bounded domains $\Omega \subset \R $ in dimensions $N\geq 7$.
 \end{abstract}
	\maketitle
	
	\section{Introduction}
	In this paper we find a new family of sign-changing solutions to the classical Brezis-Nirenberg problem 
	\beq\label{BN}\tag{BN}
	-\Delta u =|u|^{4 \over N-2}u+\e u\quad \hbox{in}\,\, \Omega,\quad 
		u=0\quad \hbox{on}\,\, \partial\Omega\eeq
	where $\eps>0$ is a small parameter and  $\Omega$ is a smooth bounded  domain in $\mathbb R^N$, $N\geq 7$.

 \medskip
 
 In their seminal 1983 paper \cite{bn}, Brezis and Nirenberg initiated the study of positive solutions to \eqref{BN} and demonstrated that for dimensions $N \geq 4$, the problem  admits a solution for $\eps \in (0, \lambda_{1,\Omega} )$, where $\lambda_{1,\Omega}$ represents the first eigenvalue of $-\Delta$ with $0$-Dirichlet boundary conditions on $\partial \Omega$.  If dimension $N$ is $3$, they proved the existence of $\la_{*,\Omega} >0$ (whose definition depends on $\Omega$) and of a positive solution to \eqref{BN} if $\lambda \in (0,\lambda_{*,\Omega})$. If $\Omega=B$ the unit ball, then
 $\la_{*,B} = {\lambda_{1,B} \over 4}$; for general domains see \cite{dru}.  Multiplying the equation in \eqref{BN} against the eigen-function associated to $\lambda_{1,\Omega}$ and integrating by parts on $\Omega$ show that no positive solutions exist  for $\eps \geq \lambda_{1,\Omega}$. Additionally, Pohozaev identity \cite{poho}  gives that problem \eqref{BN} has no non-trivial solutions 
  when $\Omega$ is star-shaped and $\eps=0$. Conversely, Bahri and Coron \cite{ba-co} presented an existence result for a positive solution to problem (1.1) for $\Omega$ with a nontrivial topology and $\eps =0$. Subsequently, considerable attention has been devoted to understanding the possibility of multiple positive solutions to \eqref{BN} in the regime $\eps \to 0$ \cite{rey1,ba-li-rey, mu-pi,mu-sa,de-do-mu} and also to understanding the limiting behavior of the positive solutions $u_\eps$ of \eqref{BN} as $\eps \to 0$ \cite{han,rey2}. 

  \medskip
Concerning  the existence of sign-changing solutions to \eqref{BN}, this has been established for all range of $\eps >0$: it has been proven  in \cite{CSS} for $\eps \in (0, \lambda_{1,\Omega})$ and $N\geq 6$, and in \cite{CFP} for $\eps \geq \lambda_{1,\Omega}$ and $N\geq 4$. Devillanova and Solimini \cite{de-so} proved the existence of infinitely many sign-changing solutions to \eqref{BN} for any $\eps >0$ when $N\geq 7$. Dimension $7$ seems to be a threshold case as for $4\leq N \leq 6$ there are no radial sign-changing solutions  for \eqref{BN}, when $\Omega$ is a ball and $\eps \in (0, \lambda_{**})$, for some $\lambda_{**}>0$ \cite{at-br-pe}.

\medskip
This paper wants to give a contribution in the understanding of multiple sign-changing solutions to \eqref{BN} in the regime $\eps \to 0$. It is well known that in this regime a crucial role is played by the {\it bubbles}, namely the positive solutions to \eqref{BN} when $\eps=0$ and $\Omega= \R.$
For any $\delta >0$ and $\xi \in \R$, the bubbles
\begin{equation} \label{bubble}\U_{\de,\xi}(x)=\alpha_N\frac{\delta^{\frac{N-2}{2}}}{\left(\d^2+|x-\xi|^2\right)^{\frac{N-2}{2}}},\quad \alpha_N:=\left[N(N-2)\right]^{\frac{N-2}{4}}
\end{equation}
are all the solutions of the problem 
	\beq\label{plim}
 \Delta u + u^{N+2 \over N-2} =0 \quad {\mbox {in}} \quad \R, \quad u \in H^1 (\R).
	\eeq

\medskip
The asymptotic analysis of low-energy sign-changing solutions to \eqref{BN} as $\eps \to 0$ has been studied in \cite{ben-el-pa} for $N\geq 4$:  assuming their existence, such solutions $u_\eps$ have a simple positive and  negative blow-up behaviour at two distinct points of $\Omega$ as $\eps \to 0$, provided the rates of blow-up for the positive and the negative parts are comparable. Roughly speaking, they can be described as follows
$$
\begin{aligned}
u_\eps (x) \sim \U_{\delta_{1\eps }, \xi_{1\eps} }(x) & -  \U_{\delta_{2\eps }, \xi_{2\eps} } (x) \quad {\mbox {with}} \quad \delta_{i\eps}, \, \to 0, \, \xi_{i\eps} \to \xi_i \in \Omega \quad i=1,2, \\
& \quad {\delta_{1\eps} \over \delta_{2\eps} }= O(1), \quad \xi_1\not= \xi_2 , \quad {\mbox {as}} \quad \eps \to 0.
\end{aligned}
$$
Construction, asymptotic analysis and multiplicity of sign-changing solutions exhibiting this type of simple blow-up as $\eps \to 0$ were obtained in \cite{ca-clapp,mi-pi,ba-mi-pi}.

However, in the case of the unit ball, the low-energy radial sign-changing solutions obtained in \cite{CSS} do not have a simple blow-up if $N\geq 7$. Indeed,  both their positive and negative parts  blow-up in the form of a positive and a negative bubble both centered at the center of the ball as $\eps \to 0$, with non comparable  rates of blow-up \cite{pre2}. Roughly speaking, in this case solutions look like
$$
\begin{aligned}
u_\eps (x) \sim \U_{\delta_{1\eps }, 0} (x) & -  \U_{\delta_{2\eps }, 0 } (x) \quad {\mbox {with}} \quad \delta_{i\eps} \, \to 0, \quad i=1,2,  \quad \, \\
& \quad {\delta_{1\eps} \over \delta_{2\eps} }= o(1),  \quad {\mbox {as}} \quad \eps \to 0.
\end{aligned}
$$
This behaviour is known as {\it tower of bubbles} (see \cite{de-do-mu0}). In \cite{pre2} it is proven that sign-changing tower of bubbles for \eqref{BN} exist as $\eps \to 0$ for dimensions $N\geq 7$ in a general domain. In contrast,  in low dimensions $N = 4, 5, 6,$ sign-changing bubble-towers cannot exist, as shown in \cite{ia-pa}. 

\medskip
In \cite{va} Vaira constructed a different type of sign-changing solutions to \eqref{BN} which blow-up in the form of a concentrated bubble and blow-up occurs at a point of the boundary of $\Omega$. Bubbling at the boundary is not always allowed  \cite{rey2}, and some extra requirement on  the domain $\Omega$ seems to be necessary.
In \cite{va} it is assumed that $\Omega$ is a smooth bounded domain with non-trivial topology  such that the problem 
	\beq\label{BN0}
	 -\Delta u_0 =|u_0|^{4 \over N-2}u_0\quad \hbox{in}\,\, \Omega, \quad u_0=0\quad \hbox{on}\,\, \partial\Omega, \quad u_0>0\quad\hbox{in}\,\, \Omega\eeq
	has a positive solution $u_0$ which is non-degenerate, in the sense that  the following  linear problem
	\begin{equation}\label{non-deg}
			-\Delta v= p|u_0|^{4\over N-2}v \quad \mbox{ in } \quad \Om, \quad 
			v=0 \quad \mbox{ in } \quad  \partial\Om
	\end{equation} admits only the trivial solution $v=0$. Existence of solutions to \eqref{BN0} for domain with non-trivial topology has been obtained by \cite{ba-co}. Besides, for generic $\Omega$ these solutions are non-degenerate \cite{sa-te}.

 Let $\nu$ be the unitary outer normal to $\partial \Omega$. Assuming that the function $\xi \in \partial \Omega \mapsto \nabla u_0(\xi) \cdot \nu (\xi) $ has a non-degenerate critical point $\xi_0$, Vaira proves the existence of a sign-changing solution  to problem \eqref{BN} of the form 
 $$
 \begin{aligned}
     u_\eps (x) &\sim u_0 (x) - \U_{\delta , \xi} (x), \quad {\mbox {with}} \quad \delta \sim\eps^{\frac{2(N-1)}{N^2-6N+4}} \\
     & \xi -\xi_0 \sim \eps^{\frac{N-2}{N^2-6N+4}} \quad {\mbox {as}} \quad \eps \to 0.
 \end{aligned}$$
 Here $\U_{\delta , \xi}$ is again the bubble defined in \eqref{bubble}.

 \medskip
 The main result of this paper is to prove that a sign-changing cluster solution to \eqref{BN} around $\xi_0$ is possible. Clustering configurations
are those where the solutions blow-up as the sum of a finite number of bubbles, of comparable heights,
whose centers converge to the same point. Clustering configurations are known to exist in several problems related to semi-linear elliptic equations with critical non-linearity, but none was known for the Brezis-Nirenberg problem \eqref{BN}.

\medskip
To state our result, let us 
 denote by $PW$ the projection of a function $W$ onto $H^1_0(\Omega)$, i.e. 
	$$\Delta PW=\Delta W\quad \hbox{in}\,\, \Omega, \qquad PW=0\quad \hbox{on}\,\, \partial\Omega.$$

 \medskip
Our main result is the following

\begin{thm}\label{teo1}
Let $\Omega$ be a smooth bounded domain in $\R$ with $N\geq 7$, such that Problem \eqref{BN0} has a  solution $u_0$, which is non-degenerate in the sense that the linear problem \eqref{non-deg} has only the trivial solution. Assume there exists a  critical point $\xi_0 \in \partial \Omega $ of the function for $\xi \in \partial \Omega \to \nabla u_0 (\xi) \cdot \nu (\xi)$, where $\nu$ is the unitary outer normal to $\partial \Omega$, such that the second variation $D_{N-1}^2 \left( \nabla u_0 (\xi) \cdot \nu (\xi)\right)$
is positive definite.

Let $k \in \mathbb{N}$. Then there exist $\bar \eps >0$ and a constant $C>0$ such that, for all $\eps \in (0, \bar \eps)$ there exists a sign-changing solution $u_\eps $ to \eqref{BN} given by
$$
u_\eps (x) = u_0 (x) - \sum_{j=1}^k P \U_{\delta_{j \eps} , \xi_{j \eps}} (x) + \phi_\eps (x) $$
where
$$
\delta_{j\eps}  = \eps^{2(N-1) \over N^2-6N+4}\, d_{j \eps} , \quad \xi_{j \eps} \not= \xi_{i \eps} \quad {\mbox {for}} \quad i \not= j, \quad \xi_{j\eps} = \xi_0 + \eps^{N-2 \over N^2-6N+4} \hat \xi_{j\eps} \in \Omega
$$
with
$$
|d_{j\eps}| \leq C, \quad |\hat \xi_{j\eps} | \leq C \quad \forall j=1, \ldots , k,
$$
and
    $$
\| \phi_\eps \|_{H_0^1 (\Omega )}  \leq C \eps^{\frac{N^3-8N+8}{2N(N^2-6N+4)}  + \sigma}
$$
for any $\sigma >0$ arbitrarily small.
\end{thm}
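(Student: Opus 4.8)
The plan is to use a Lyapunov--Schmidt finite-dimensional reduction built around the approximate solution
$$
\Wd (x) = u_0(x) - \sum_{j=1}^k P\U_{\delta_j,\xi_j}(x),
$$
with the concentration parameters scaled as in the statement, $\delta_j = \e^{2(N-1)/(N^2-6N+4)} d_j$ and $\xi_j = \xi_0 + \e^{(N-2)/(N^2-6N+4)}\hat\xi_j$. First I would set up the functional-analytic framework: rewrite \eqref{BN} as $u = (-\Delta)^{-1}\bigl(|u|^{4/(N-2)}u + \e u\bigr)$ in $\H$, substitute $u = \Wd + \phi$, and split the resulting equation into the orthogonality condition and its complement with respect to the approximate kernel $\mathcal K_{\bm\de,\bm\xi} = \vspan\{P\psi^0_{\delta_j,\xi_j}, P\psi^i_{\delta_j,\xi_j}\}$, where $\psi^0, \psi^i$ are the $N+1$ Jacobi fields of each bubble (dilation and translations). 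The linearized operator around $\Wd$, restricted to the orthogonal complement of $\mathcal K_{\bm\de,\bm\xi}$, must be shown to be invertible with inverse bounded uniformly in $\e$; this uses the non-degeneracy of $u_0$ (to control the $u_0$-part of the linearization) together with the standard non-degeneracy of the bubbles \eqref{plim} (to control each bubble block), and relies crucially on the bubbles being well-separated at the relevant scale, which follows from $\xi_j \neq \xi_i$ and the fact that $|\xi_j - \xi_i| \gg \delta_j$ in the cluster regime.

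Next I would estimate the error term $R_{\bm\de,\bm\xi} = (-\Delta)^{-1}\bigl(|\Wd|^{4/(N-2)}\Wd + \e \Wd\bigr) - \Wd$ in an appropriate norm (dual Sobolev or $L^{2N/(N+2)}$). The main contributions come from: the interaction of $u_0$ with each bubble near the boundary, which produces a term of order $\e^{(N-1)/(N^2-6N+4)}$-type scaled by $\nabla u_0(\xi_0)\cdot\nu$; the linear term $\e\, P\U_{\delta_j,\xi_j}$; the bubble--bubble interactions $\U_{\delta_i,\xi_i}\U_{\delta_j,\xi_j}^{4/(N-2)-1}$, which are of higher order because of the scale separation; and the difference $P\U - \U$ controlled by the regular part of the Green's function. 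Combining the invertibility with a contraction mapping argument in a ball of radius $\sim \e^{(N^3-8N+8)/(2N(N^2-6N+4)) + \sigma}$ yields, for each admissible choice of $(\bm d, \bm{\hat\xi})$ in a compact set, a unique $\phi = \phi_{\bm\de,\bm\xi}$ solving the infinite-dimensional part of the problem, smooth in the parameters, with the stated bound on $\|\phi_\e\|_{\H}$.

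The final and most delicate step is solving the reduced (bifurcation) equations: one must find $(\bm d, \bm{\hat\xi})$ such that the projections of the error onto each $P\psi^0_{\delta_j,\xi_j}$ and $P\psi^i_{\delta_j,\xi_j}$ vanish. This is equivalent, by the variational structure, to finding a critical point of the reduced energy $\mathcal J_\e(\bm d, \bm{\hat\xi}) = E_\e(\Wd + \phi_{\bm\de,\bm\xi})$. I would expand $\mathcal J_\e$ and isolate its leading term, which should take the form
$$
\mathcal J_\e(\bm d,\bm{\hat\xi}) = k\, c_N + \e^{\alpha}\left[ \sum_{j=1}^k \Bigl( a_1 d_j \nabla u_0(\xi_0)\cdot\nu + a_2 d_j^2\, \hat\xi_j^{\mathsf T} D^2_{N-1}(\nabla u_0\cdot\nu)\,\hat\xi_j - a_3 \log d_j \Bigr) - a_4 \sum_{i\neq j}\varphi_{ij}(\bm d,\bm{\hat\xi}) \right] + \text{l.o.t.},
$$
for suitable positive constants and a suitable power $\alpha$, where $\varphi_{ij}$ is the (positive) interaction potential between the $i$-th and $j$-th bubbles, singular as $\hat\xi_i \to \hat\xi_j$. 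The hard part is exactly here: one must show this leading-order function has a stable critical point in the interior of the configuration space $\{|d_j|\le C,\ |\hat\xi_j|\le C,\ \hat\xi_i\neq\hat\xi_j\}$. The mechanism is a balance --- each bubble is pulled toward $\xi_0$ by the $d_j^2 \hat\xi_j^{\mathsf T}D^2_{N-1}(\nabla u_0\cdot\nu)\hat\xi_j$ term (positive definite, hence confining), pushed apart from its neighbors by the repulsive interaction $\varphi_{ij}$, and each height $d_j$ is pinned by the competition between the linear term $a_1 d_j \nabla u_0(\xi_0)\cdot\nu$ (note $\nabla u_0(\xi_0)\cdot\nu < 0$ since $u_0 > 0$ inside and vanishes on the boundary) and the logarithmic term $-a_3\log d_j$. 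One shows the existence of an interior minimum (or min-max point) of this function by a compactness/degree argument, verifies its stability under the l.o.t. perturbations (which are $C^1$-small uniformly), and concludes. The resulting $(\bm d_\e, \bm{\hat\xi}_\e)$ produces the solution $u_\e$ of the theorem.
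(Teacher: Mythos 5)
Your overall skeleton (rewrite \eqref{BN} via $\i$, split along the approximate kernel spanned by the $P\psi^j_{\de_i,\xi_i}$, invert the linearization using the non-degeneracy of $u_0$ and of the bubbles, estimate the error, run a contraction in a ball of radius $\e^{\hat\theta/2+\sigma}$, then solve a reduced variational problem) coincides with the paper's Section 3--4. The gap is in the reduced problem, which is exactly the delicate part. Your ansatz treats $d_j$ and $\hat\xi_j$ as order-one free unknowns at the single scales $\de_j=\e^{\alpha}d_j$, $\xi_j=\xi_0+\e^{\beta}\hat\xi_j$. With this parametrization the bubble--bubble interaction $\de_i^{\frac{N-2}{2}}\de_h^{\frac{N-2}{2}}|\xi_i-\xi_h|^{-(N-2)}$ is of size $\e^{(\alpha-\beta)(N-2)}=\e^{\theta}$, i.e.\ exactly the same order as the single-bubble terms (boundary term $\de^{N-2}H(\xi,\xi)$, the $\e\de^2$ term, and $\de^{\frac{N-2}{2}}u_0(\xi)$), contradicting your claim that the interactions are of higher order. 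At that common order the repulsion pushes the points apart to a strictly larger scale, so the configuration space you describe has no interior stable critical point, and the compactness/degree argument you invoke does not close. The paper's resolution is the key missing idea: a two-scale expansion around the one-bubble (Vaira) critical point $(d_0,t_0,\xi_0)$ of \eqref{dd0}, namely $\de_i=\e^{\alpha}d_0+\e^{\hat\alpha}d_i$, $\xi_i=\hat\xi_i+(\e^{\beta}t_0+\e^{\tilde\beta}t_i)\nu(\hat\xi_i)$ with the cluster spread tangentially at the intermediate scale $\e^{\hat\beta}$, $\hat\beta<\beta$ (see \eqref{deltai}). Then the $O(\e^{\theta})$ part of the reduced energy is independent of the free parameters, the $O(\e^{\hat\theta})$ linear terms cancel because $(d_0,t_0)$ solves the system \eqref{sys}, and what remains at order $\e^{\hat\theta}$ is a positive-definite quadratic form in $(\bm d,\bm t)$ plus a $\bm\tau$-functional in which the mutual repulsion $\sum_{h<i}d_0^{N-2}|\tau_i-\tau_h|^{-(N-2)}$ is balanced against the quadratic form generated by $D^2_{N-1}\partial_\nu u_0(\xi_0)$; this is where the positive-definiteness hypothesis and the choice of $\hat\beta$ are used, and it is this structure that yields a $C^0$-stable critical point.

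Relatedly, your guessed leading-order reduced energy is structurally wrong for this problem: no logarithmic term $-a_3\log d_j$ appears, and the heights are not pinned by a linear-versus-log competition. The scaling $\alpha=\frac{2(N-1)}{N^2-6N+4}$, $\beta=\frac{N-2}{N^2-6N+4}$ comes from the algebraic balance of the three terms in \eqref{dd0}, i.e.\ $-\mathtt B d^2+\frac{\alpha_N}{2^{N-1}}\mathtt C\,d^{N-2}t^{-(N-2)}-\mathtt C\,\partial_\nu u_0(\xi_0)\,d^{\frac{N-2}{2}}t$, whose critical point fixes $(d_0,t_0)$ at leading order for every bubble simultaneously; also the Hessian of $\partial_\nu u_0$ enters the expansion multiplied by $d_0^{\frac{N-2}{2}}t_0$ (through $u_0(\xi_i)$), not by $d_j^2$. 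Without the two-scale refinement and the cancellation via \eqref{sys}, the finite-dimensional problem as you set it up is not shown to have a stable critical point, so the proposal as written does not prove the theorem.
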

 
 \medskip
The solutions described in the theorem are rather delicate to capture, and precise expansions of the parameters $\delta_{j\epsilon}$ and the points $\xi_{j\epsilon}$ at two consecutive scales are required in the construction. This is described in details in Section \ref{ans}.

	\medskip
The method we use to prove Theorem \ref{teo1} also applies to the construction
of sign-changing solutions exhibiting a cluster configuration near the boundary of $\Omega$ for the almost critical problem
$$
	-\Delta u =|u|^{{4 \over N-2}-\eps}u\quad \hbox{in}\,\, \Omega,\quad 
		u=0\quad \hbox{on}\,\, \partial\Omega
  $$
	where $\eps>0$ is a small parameter and  $\Omega$ is a smooth bounded  domain in $\mathbb R^N$, $N\geq 7$.
This observation is already present in \cite{va} and we will not elaborate further on this point.

 \medskip
 Clustering configurations are known in the literature for perturbation of the Yamabe problem to find metrics on Riemannian manifolds with constant scalar curvature. These have been found in high dimensions $N\geq 7$ in \cite{pi-va1}, in dimensions $4$ and $5$ in \cite{thizy-ve}, see also \cite{ro-ve}. We dont't know if clustering sign-changing solutions   exist for the Brezis-Nirenberg problem \eqref{BN} in low dimensions $4,5,6$, but if it does the form of the solution should though be different from the one obtained in Theorem \ref{teo1}. 

 \medskip
 Finally, we mention that several interesting results have been obtained on the
existence of sign changing solutions to the Brezis-Nirenberg problem in regimes different from the one treated in this paper, namely when $\eps$ converges to some fixed $\eps_* >0$. Results in this direction are contained for instance  in \cite{iava,pi-va2}.

 \medskip
 \noindent

{\bf Acknowledgments.} The authors would like to express their gratitude to Angela Pistoia for many interesting discussions around this topic.
M. Musso has been supported by EPSRC research Grant EP/T008458/1 while G. Vaira has been supported by Gnampa project "Proprietà qualitative delle soluzioni di equazioni ellittiche".
	
	\section{The setting of the problem } \label{ans}

  We consider the Hilbert space $H^1_0(\Om)$ equipped with the usual inner product \begin{equation*}
		\l u, v \r = \int_\Om \nabla u \cdot\nabla v
	\end{equation*} which induces the norm
	\begin{equation*}
		\|u\|_{H^1_0(\Om)}= \(\int_\Om |\nabla u|^2\)^{\frac 12}.
	\end{equation*} For $r\in[1,\infty)$ and $u\in L^r(\Om)$ we set $|u|_{r,\Om} =
	\(\int_\Om |u|^r\)^{\frac 1r}.$
	
	Let $\i: L^{\frac{2N}{N+2}}(\Om) \to H^1_0(\Om)$ be the adjoint operator  of the immersion $ i: H^1_0(\Om) \hookrightarrow L^{\frac{2N}{N-2}}(\Om)$. By definition $u=\i(f)$ if and only if \[\l u,\varphi\r = \int_\Om f\varphi \mbox{ for all } \varphi\in H^1_0(\Om)\]
	or equivalently $u$ weakly  solves \[-\Delta u = f \mbox{ in } \Om , \quad u=0 \mbox{ on } \partial\Om .\] 
	The operator $\i: L^{\frac{2N}{N+2}}(\Om) \to H^1_0(\Om)$ is continuous as \beq \label{ibound} \|\i(f)\|_{H^1_0(\Om)}\leq S^{-1}|f|_{\frac{2N}{N+2};\Om}\eeq where $S$ is the best constant for the Sobolev embedding. 
	 
	In terms of the operator $\i$, problem \eqref{BN} can be formulated as \begin{equation}
		u=\i(|u|^{p-1}u+\eps u) \label{prob} .
	\end{equation}
	We look for cluster solutions of the problem \eqref{BN} which change sign. They  have the form 
	\begin{equation}\label{dd}
 u_\e(x)= W_{\bm\de,\bm\xi}(x)+\phi_{\bm\de,\bm\xi}(x) \quad \mbox{ where } \quad W_{\bm\de,\bm\xi}(x)=u_0(x)-{\sum_{i=1}^k P\Ui(x)}.
 \end{equation}
 Here $k$ is a fixed given integer, $u_0$ is the positive non-degenerate solution to \eqref{BN0}, 
 \[P\Ui=P\U_{\de_i,\xi_i}= \i(\Ui^p), \quad \bm\de = \(\de_1,\cdots,\de_{k}\) \in \RR^k \quad {\mbox {and}} \quad \bm\xi =\(\xi_1,\cdots,\xi_{k}\)\in \Om^k.
 \]
 In our construction the scaling parameters $\de_i$ will be positive and small, while the points $\xi_i$  will collapse into each other, as $\e \to 0$.

	In \cite{va} Vaira constructs a solution to problem \eqref{BN} of the form \eqref{dd}, with $k=1$ under the assumption that there exists   a non-degenerate critical point $\xi_0$ of the function $\xi \in \partial \Omega \mapsto \partial_\nu u_0(\xi)$, where $\nu$ is the inner unit normal on the boundary. Such solution blows-up, as $\e \to 0$, at $\xi_0$, in the sense that 
 the scaling parameter $\delta$ and the point $\xi$ in \eqref{dd}  can be described at main order as
	\begin{equation} \label{d0}
		\begin{aligned}
			&\delta \sim \eps^\alpha d_0 &\mbox{ with } \alpha =\frac{2(N-1)}{N^2-6N+4} \\
			&\xi\sim \xi_0+t_0 \eps^\beta \nu(\xi_0) &\mbox{ with } \beta =\frac{N-2}{N^2-6N+4} ,
		\end{aligned} \quad {\mbox { as }} \eps \to 0.
	\end{equation}
One has $\delta \to 0$ and $\xi \to \xi_0$ as $\eps \to 0.$	 The point $(d_0,t_0,\xi_0) \in \mathbb{R}^+ \times \mathbb{R} \times \partial \Omega $ is a critical point of the function
	\beq \label{dd0} \Psi(d,t,\xi)=-\mathtt C \partial_\nu u_0(\xi) d^\frac{N-2}{2}t+\frac{\alpha_N}{2^{N-1}}\mathtt C \frac{d^{N-2}}{t^{N-2}}-\mathtt B d^2 \eeq
	where $\mathtt C$ and 
	 $\mathtt B$ are the explicit positive constants
	\beq \mathtt C= \int_\R \U_{1,0}^p \quad and  \quad \mathtt B= 
	\frac 12\int_\R \U_{1,0}^2 . \eeq
 A direct computation gives that $(d_0,t_0,\xi_0)$ satisfies the system
	\begin{equation}\label{sys}
		\left\{\begin{aligned}&-2\mathtt B d_0+\frac{\alpha_N}{2^{N-1}}\frac{d_0^{N-3}}{t_0^{N-2}}\mathtt C-\mathtt C\frac{N-2}{2}d_0^{\frac{N-4}{2}}t_0\partial_\nu u_0(\xi_0)=0\\
			&-\frac{\alpha_N(N-2)}{2^{N-1}}\frac{d_0^{N-2}}{t_0^{N-1}}\mathtt C-\mathtt C d_0^{\frac{N-2}{2}}\partial_\nu u_0(\xi_0)=0
			\\& \nabla_{\xi_0} \partial_\nu u_0(\xi_0) = 0. 
		\end{aligned}\right.\end{equation}
  Since by Hopf's Lemma $\partial_\nu u_0(\xi_0)<0$, 
the function $\Psi$ has a critical point in the considered region. 

 The result in \cite{va} indicates that a solution with the form \eqref{dd} and $k >1$ would possibly exhibit a cluster behaviour around the point $\xi_0$. This suggests the form for the scaling parameters $\delta_i$ and the points $\xi_i$ in \eqref{dd}. Let us be more precise.
 
 Locally around $\xi_0$, the boundary 
	$\partial\Omega$ can be described as the set of points  
 $x= (x',\vartheta(x'))$, for a certain smooth function $\vartheta: \RR^{N-1} \to \RR$.  Without loss of generality, we assume  $\partial_{x_j}\vartheta (\xi_0)=0$ for all $j=1,\cdots, N-1$ and we write $\xi_0 = (\xi_0', \vartheta (\xi_0'))$.
For the construction of a cluster solution, we assume that $\bm\xi = (\xi_1,\cdots,\xi_k) \in \Om^k$ and $\bm\de=(\de_1,\cdots,\de_k)\in\RR^k$ in \eqref{dd} have the form
	\beq \label{deltai}\begin{aligned}  &\d_i=\e^{\alpha}d_0+\e^{\hat\alpha}d_i &\mbox{ with } \hat\alpha=\frac{3N^2-6N+4}{N(N^2-6N+4)}; \\
		&\hat\xi_i= (\xi_0'+\eps^{\hat\beta}\tau_i, \vartheta(\xi_0'+\eps^{\hat\beta}\tau_i)) &\mbox{ with } \hat\beta= \frac{(N-2)^2}{N(N^2-6N+4)};\\
		&\xi_i=\hat\xi_i +(\e^\beta t_0+\e^{\tilde \beta}t_i) \nu(\hat\xi_i) &\mbox{ with }
		\tilde\beta:=\frac{2N^2-6N+4}{N(N^2-6N+4)} .\end{aligned} \eeq
	where $d_0,t_0, \xi_0$, $\alpha$ and $\beta$ are defined in \eqref{d0}, $\hat{\xi}_i\in\partial\Omega$, $\tau_i\in\RR^{N-1}$, $\tau_i , d_i \in \mathbb{R}$ are parameters to be found. For the moment,  we make the following assumptions on these parameters: we assume there exist $a>0$ and $\rho >0$ such that
 $\tau_i$, $\tau_i$ and $d_i$
 \beq\label{par}
 |d_i| , \, |t_i | < a, \forall i=1, \ldots , k, \quad |\tau_i - \tau_h | > \rho \quad \forall i \not= h.
 \eeq

 {	\begin{oss}\label{distxi}
		Without loss of generality, we can choose a coordinate system such that $\nabla_{N-1} \vartheta (\xi_0') = 0$. Now a computation shows that  { $$ \ba |\hat\xi_i-\hat\xi_h|^2 &= \eps^{2\hat\beta}\sum_{\ell=1}^{N-1} (\tau_i-\tau_h)^2_\ell + (\vartheta(\xi_0'+\eps^{\hat\beta}\tau_i)-\vartheta(\xi_0'+\eps^{\hat\beta}\tau_h))^2 \\&= \eps^{2\hat\beta} |\tau_i-\tau_h|^2 + \eps^{2\hat\beta} \underbrace{\nabla_{N-1} \vartheta (\xi_0')}_{=0}\cdot (\tau_i-\tau_h) + o(\eps^{2\hat\beta}). \ea $$ and then
			\beq \label{diff xi}\begin{aligned}|\xi_i-\xi_h|^2&=|\hat\xi_i+(\e^\beta t_0+\e^{\tilde\beta} t_i)\nu(\hat\xi_i)-\hat\xi_h-(\e^\beta t_0+\e^{\tilde\beta} t_h)\nu(\hat\xi_h)|^2\\
				&= |\hat\xi_i-\hat\xi_h+o(\eps^{\hat\beta})|^2\\
				&=\e^{2\hat\beta}|\tau_i-\tau_h|^2 + o\left(\e^{2\hat\beta}\right).\end{aligned}\eeq}
	\end{oss}}

	It is important to observe that $\alpha<\hat\alpha$ and $\hat\beta<\beta<\tilde\beta$. Let us call
	\beq \theta= 1+2\alpha = (\alpha-\beta)(N-2)= \frac{N(N-2)}{N^2-6N+4} \label{theta},\eeq and
	\beq \hat\theta = 1+2\hat\alpha = (\alpha-\hat\beta)(N-2) = \frac{N^3-8N+8}{N(N^2-6N+4)}  \label{htheta}.\eeq

	Now we are able to state our main result.
	
	\begin{thm}\label{main thm}
		Assume there exists  is a $C^1$-stable critical point $\xi_0$ of the function $\xi \in \partial \Omega \mapsto \partial_\nu u_0(\xi)$  such that $D^2_{N-1}\partial_\nu u_0(\xi_0 )$ is positive definite, and let $k \in \mathbb{N}$. Then there exists $\eps_0>0$ such that for any $\eps \in(0,\eps_0)$ the problem \eqref{BN} has a sign-changing cluster solution which blows-up at $\xi_0$. More precisely, there exist constants $C$, $a$, $\rho$, a function 
$\phi_\eps \in H^1_0 (\Omega )$, points $\bm\xi_\eps = (\xi_{1\eps} ,\cdots,\xi_{k \eps}) \in \Om^k$ with $\xi_{i\eps} \not= \xi_{h \eps}$ if $i\not= h$, and parameters $\bm\de_\eps =(\de_{1 \eps},\cdots,\de_{k \eps})\in\RR^k$ satisfying \eqref{deltai}-\eqref{par} such that $u_\eps$ defined in \eqref{dd} is a solution to \eqref{BN}. Moreover 
$$
\| \phi_\eps \|_{H_0^1 (\Omega )}  \leq C \eps^{{\hat \theta \over 2} + \sigma}
$$
for some $\sigma >0$ arbitrarily small.
  
	\end{thm}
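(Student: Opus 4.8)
# Proof Proposal for Theorem 2.3

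\textbf{Overall strategy: Lyapunov–Schmidt reduction.} The plan is to carry out the standard finite-dimensional reduction adapted to the cluster ansatz \eqref{dd}--\eqref{deltai}. Write the solution as $u_\e = \Wd + \phi$ where $\Wd = u_0 - \sum_{i=1}^k P\Ui$ is the approximate solution and $\phi$ is a small remainder. Using the operator $\i$, problem \eqref{BN} becomes $\phi = \i\big(|\Wd+\phi|^{p-1}(\Wd+\phi) + \e(\Wd+\phi)\big) - \Wd$. Decompose $H^1_0(\Om)$ as the direct sum of the approximate kernel $K_{\bm\de,\bm\xi} = \vspan\{P\psi^j_{\de_i,\xi_i} : i=1,\dots,k,\ j=0,1,\dots,N\}$ (spanned by the projected derivatives of the bubbles with respect to the dilation and translation parameters) and its orthogonal complement $K^\perp_{\bm\de,\bm\xi}$. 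First I would solve the auxiliary equation in $K^\perp$ for $\phi = \phi_{\bm\de,\bm\xi}$, obtaining a unique small solution for parameters in the admissible range \eqref{par}; then I would solve the finite-dimensional (bifurcation) equation by showing that the reduced energy has a critical point, which by \eqref{dd0}--\eqref{sys} and the nondegeneracy hypotheses corresponds to the expansions \eqref{deltai}.

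\textbf{Step 1: Linear theory.} The key ingredient is uniform invertibility of the linearized operator $L_{\bm\de,\bm\xi}(\phi) = \phi - \i\big(p|\Wd|^{p-1}\phi\big)$ restricted to $K^\perp_{\bm\de,\bm\xi}$. One shows there is a constant $C>0$, independent of $\e$ (and of admissible $\bm\de,\bm\xi$), such that $\|L_{\bm\de,\bm\xi}\phi\|_{\H} \geq C\|\phi\|_{\H}$ for all $\phi \in K^\perp_{\bm\de,\bm\xi}$. This is proved by contradiction: a normalized sequence $\phi_n$ with $L_{\bm\de_n,\bm\xi_n}\phi_n \to 0$ is rescaled around each bubble; since the $\de_i$ are mutually comparable (all $\sim \e^\alpha d_0$) but the mutual distances $|\xi_i-\xi_h| \sim \e^{\hat\beta}|\tau_i - \tau_h| \gg \de_i$ by \eqref{diff xi}, the rescaled limits decouple and each solves the nondegenerate linearized equation $-\Delta w = p\,\U_{1,0}^{p-1} w$ on $\R$ (whose kernel is exactly spanned by $\partial_\de \U, \partial_{\xi_j}\U$), together with the contribution from the nondegenerate $u_0$ via \eqref{non-deg}; the orthogonality to $K_{\bm\de_n,\bm\xi_n}$ forces all limits to vanish, contradicting normalization. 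The proximity of the bubbles to the boundary requires care in the estimates on $P\Ui - \Ui$, which is controlled by the standard boundary-layer expansion $\varphi_{\de_i,\xi_i} := \Ui - P\Ui \sim \de_i^{(N-2)/2} H(x,\xi_i)$.

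\textbf{Step 2: The nonlinear auxiliary equation.} With uniform invertibility in hand, a contraction-mapping argument in a ball of $K^\perp_{\bm\de,\bm\xi}$ of radius $C\e^{\hat\theta/2+\sigma}$ yields a unique $\phi_{\bm\de,\bm\xi}$, smooth in the parameters. The radius is dictated by the size of the error term $\|\i(|\Wd|^{p-1}\Wd + \e\Wd) - \Wd\|_{\H}$, which one must estimate carefully: the interaction between $u_0$ and the bubbles, the bubble–bubble interactions, the boundary-layer terms, and the linear term $\e \Wd$ all contribute, and the dominant contribution is of order $\e^{\hat\theta/2+\sigma}$ where $\hat\theta$ is as in \eqref{htheta} — this is exactly where the delicate choice of exponents $\hat\alpha, \hat\beta, \tilde\beta$ in \eqref{deltai} becomes essential, since a crude ansatz would leave an error that cannot be absorbed.

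\textbf{Step 3: The reduced problem and main obstacle.} Finally I would expand the reduced energy $\tilde J(\bm\de,\bm\xi) := J_\e(\Wd + \phi_{\bm\de,\bm\xi})$, where $J_\e$ is the Euler functional of \eqref{BN}. Writing the parameters in the form \eqref{deltai} in terms of $(d_i, t_i, \tau_i)$, one gets an expansion whose leading part is $k$ copies of Vaira's function $\Psi$ from \eqref{dd0} plus lower-order coupling terms encoding the mutual repulsion of the bubbles and the second-order geometry of the boundary (governed by $D^2_{N-1}\partial_\nu u_0(\xi_0)$). \textbf{The main obstacle is precisely this energy expansion: one needs the expansion at two consecutive scales accurately enough to see the term that pins down the cluster variables $\tau_i$ (forcing $|\tau_i - \tau_h| \gtrsim \rho$, i.e. genuine separation) and the term that fixes the $d_i, t_i$.} The repulsive bubble-interaction term $\sim \sum_{i\neq h}(\de_i\de_h)^{(N-2)/2}/|\xi_i-\xi_h|^{N-2}$ competes with the curvature-type term coming from $\partial_\nu u_0$ near $\xi_0$ restricted to the sphere configurations; showing this competition produces a topologically stable critical point (using the $C^1$-stability of $\xi_0$ and positive definiteness of $D^2_{N-1}\partial_\nu u_0(\xi_0)$, together with a standard degree/min-max argument for the $(d_i,t_i,\tau_i)$) is the heart of the proof. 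Once a stable critical point $(\bm d_\e, \bm t_\e, \bm\tau_\e)$ of $\tilde J$ is located in the admissible region \eqref{par}, the corresponding $u_\e = \Wd + \phi_{\bm\de,\bm\xi}$ solves \eqref{BN}, and the bound on $\|\phi_\e\|_{\H}$ follows from Step 2, completing the proof.
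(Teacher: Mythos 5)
Your proposal is correct and follows essentially the same route as the paper: a Lyapunov–Schmidt reduction with the cluster ansatz \eqref{dd}--\eqref{deltai}, uniform invertibility of the linearized operator on $K^\perp_{\bm\de,\bm\xi}$ proved by a rescaling/contradiction argument, a contraction mapping giving $\phi_{\bm\de,\bm\xi}$ of size $\e^{\hat\theta/2+\sigma}$ driven by the error estimate, and an expansion of the reduced energy whose $\e^{\hat\theta}$-order part (positive quadratic terms in $d_i,t_i$, bubble repulsion, and the $D^2_{N-1}\partial_\nu u_0(\xi_0)$ term) yields a stable critical point in $(\bm d,\bm t,\bm\tau)$. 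This matches the paper's Propositions 3.5--3.7 and 4.2--4.4 step by step.
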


Theorem \ref{teo1} is a direct consequence of Theorem \ref{main thm}.

The rest of the paper is devoted to prove Theorem \ref{main thm}. The proof is done via a reduction method.

In Section \ref{reduction} we prove that, for given $\bm\xi = (\xi_1,\cdots,\xi_k) \in \Om^k$ and $\bm\de=(\de_1,\cdots,\de_k)\in\RR^k$ satisfying \eqref{deltai}-\eqref{par}, there exists  $\phi_\eps$   solution of a projected problem. We then show that a true solution to our problem can be achieved by finding a specific set of parameters $\bm\xi = (\xi_1,\cdots,\xi_k) \in \Om^k$ and $\bm\de=(\de_1,\cdots,\de_k)\in\RR^k$. This is the reduced finite dimensional problem that we treat in the subsequent sections.

	\section{Reduction to a finite dimensional problem}\label{reduction}

The purpose of this section is to find 
the term $\phi_\e$ in \eqref{dd} for given $\bm\xi = (\xi_1,\cdots,\xi_k) \in \Om^k$ and $\bm\de=(\de_1,\cdots,\de_k)\in\RR^k$ satisfying \eqref{deltai}-\eqref{par}. The term $\phi_\eps$ will be small in $\eps$ and  satisfy a set of orthogonal conditions.   To introduce these orthogonality conditions,  consider the linear problem
	\[-\Delta \psi = p \U_{1,0}^{p-1} \psi \mbox{ in } \R,\]
	whose set of solution is spanned by the functions 
	\[\psi^0(x) = \frac{N-2}{2}\U_{1,0}(x) +  \nabla \U_{1,0}(x) \cdot x= \alpha_N \frac{N-2}{2} \frac{1-|x|^2}{(1+|x|^2)^{N/2}}  \] and
	\[\psi^j(x) = \frac{\partial \U_{1,0}}{\partial x_j} (x) = -\alpha_N (N-2) \frac{x_j}{(1+|x|^2)^{N/2}} . \]
	Set \[\psi_{\de,\xi}^j (x) = \frac{1}{\de^{\frac{N-2}{2}}}\psi^j\(\frac{x-\xi}{\de}\) \mbox{ for all }   j=0,\cdots, N\] 
	and  define  \[P\psi_i^j = \i \left(p\Ui^{p-1}\psi_i^j\right) \mbox{ for all } i=1,\cdots,k \mbox{ and } j=0,\cdots,N\] where $\Ui=\U_{\de_i,\xi_i}$ and $\psi_i^j=\psi_{\de_i,\xi_i}^j$.
	For any $\bm\de\in\RR^k$ and $\bm\xi\in\Omega^k$, consider the space
	\[K_{\bm\de,\bm\xi} = \vspan \{P\psi_i^j \mbox{ for all }  i=1,\cdots, k \mbox{ and } j=0,\cdots, N\} \] and its orthogonal 
	\[K^\perp_{\bm\de,\bm\xi} = \{ \phi \in H^1_0(\Om) : \l \phi, P\psi_i^j \r = 0 \mbox{ for all }  i=1,\cdots, k \mbox{ and }  j=0,\cdots, N  \} .\]
	The reminder term introduced in \eqref{dd} will satisfy $\phi_\eps\in K_{\bm\de,\bm\xi}^\perp$.
	
	Now let us recall some useful results.

\begin{lem}
		Let $N>6$ and $q \in \left(\frac{p+1}{2},p+1\right]$. 
		Let $\de\in(0,\infty)$, $\xi\in \Om$ and $\U_{\delta , \xi}$ as in \eqref{bubble}.   Then there exist there exists a constant $C=C(a)>0$ and $\eps_0>0$ such that for any $\eps\in (0,\eps_0)$ 
		it holds 	\beq |\U_{\de,\xi}|_{2^\star,\Om} \leq C, \label{est U2star}  \eeq
		\beq |\U_{\de,\xi}|_{\frac{2N}{N+2},\Om} \leq C \de^2, \label{est Uconj}\eeq
		\beq |\psi^j_{\de,\xi}|_{\frac{2N}{N+2},\Om} \leq C\de^2 . \label{psi conj}\eeq
	\end{lem}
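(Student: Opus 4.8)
The three estimates \eqref{est U2star}, \eqref{est Uconj}, \eqref{psi conj} are all of the same elementary nature: one reduces each $L^r(\Om)$ norm to an explicit integral over $\R$ by the change of variables $y = (x-\xi)/\de$, and then either bounds the integral over all of $\R$ (when it converges) or estimates the tail contribution coming from the finite domain. The plan is to treat them in that order. First, for \eqref{est U2star}, I would use the scaling invariance $\U_{\de,\xi}(x) = \de^{-(N-2)/2}\U_{1,0}((x-\xi)/\de)$, which has $|\cdot|_{\dst,\R}$-norm independent of $\de,\xi$; hence $|\U_{\de,\xi}|_{\dst,\Om} \le |\U_{1,0}|_{\dst,\R} = S^{N/2}$ (or whatever normalization is being used), so one can take $C$ equal to this fixed constant, with no restriction on $\de$ or $\xi$ needed. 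This is immediate since $\Om \subset \R$ and $\U_{1,0} \in L^{\dst}(\R)$ because $2^\star (N-2)/2 = N$ and the integrand decays like $|y|^{-2N}$ at infinity, which is integrable for $N \ge 1$.

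Next, for \eqref{est Uconj}, the same substitution gives
\[
|\U_{\de,\xi}|_{\frac{2N}{N+2},\Om}^{\frac{2N}{N+2}} = \de^{-\frac{(N-2)N}{N+2}} \int_\Om \U_{1,0}\!\left(\tfrac{x-\xi}{\de}\right)^{\frac{2N}{N+2}} dx = \de^{N - \frac{(N-2)N}{N+2}} \int_{(\Om-\xi)/\de} \U_{1,0}(y)^{\frac{2N}{N+2}}\, dy .
\]
The exponent of $\de$ in front is $N - \frac{(N-2)N}{N+2} = \frac{2N \cdot 2}{N+2} = \frac{4N}{N+2}$, so taking the $\frac{N+2}{2N}$-th power yields exactly $\de^2$, provided the remaining integral is bounded uniformly. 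That integral, extended to all of $\R$, converges if and only if $\U_{1,0}(y)^{\frac{2N}{N+2}} \sim |y|^{-\frac{(N-2)N}{N+2}}$ is integrable at infinity, i.e. $\frac{(N-2)N}{N+2} > N$, equivalently $N(N-2) > N(N+2)$, which is \emph{false} --- so the integral over all of $\R$ diverges and one genuinely needs the domain to be bounded. Since $\Om$ is bounded, $(\Om-\xi)/\de \subset B_{R/\de}(0)$ for $R = \operatorname{diam}\Om$, and
\[
\int_{B_{R/\de}(0)} \U_{1,0}(y)^{\frac{2N}{N+2}}\, dy \le C + C \int_1^{R/\de} r^{-\frac{(N-2)N}{N+2}} r^{N-1}\, dr \le C\, (R/\de)^{\,N - \frac{(N-2)N}{N+2}} = C\, \de^{-\frac{4N}{N+2}} .
\]
Multiplying by the prefactor $\de^{\frac{4N}{N+2}}$ gives a bound $\le C R^{\frac{4N}{N+2}}$ independent of $\de$, and raising to the power $\frac{N+2}{2N}$ gives $\le C\de^0 \cdot \de^{0}$... wait, one must be careful: the $\de^2$ in the statement comes from combining the prefactor $\de^{\frac{4N}{N+2}}$ with the bound $C$ (not $C\de^{-4N/(N+2)}$) on the integral. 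The point is that the \emph{correct} accounting is: the prefactor is $\de^{\frac{4N}{N+2}}$, the integral over $\R$ diverges, but its divergent part, cut off at scale $R/\de$, contributes a factor $(\de)^{-(\cdots)}$ that must cancel. Let me restate cleanly: one splits $\int_{B_{R/\de}} = \int_{B_1} + \int_{B_{R/\de}\setminus B_1}$; the first is a fixed constant, the second is $\le C(R/\de)^{N-\frac{(N-2)N}{N+2}}$ when $N - \frac{(N-2)N}{N+2} > 0$, i.e. $N+2 > N-2$, always true, so this term behaves like $\de^{-(N-\frac{(N-2)N}{N+2})} = \de^{-\frac{4N}{N+2}}$ and dominates. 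Hence $|\U_{\de,\xi}|_{\frac{2N}{N+2},\Om}^{\frac{2N}{N+2}} \le \de^{\frac{4N}{N+2}} \cdot C\de^{-\frac{4N}{N+2}} = C$, giving $|\U_{\de,\xi}|_{\frac{2N}{N+2},\Om} \le C$ --- which is \emph{not} $\de^2$. So actually the $\de^2$ must come from the \emph{convergent} regime: since the integrand decays too slowly, the bound is dominated by the large-$y$ region and the honest statement is that $|\U_{\de,\xi}|_{\frac{2N}{N+2},\Om}$ is bounded by a constant times $(R)^{\cdots}$, independent of $\de$; the factor $\de^2$ appears only if one instead integrates $\U_{\de,\xi}^{\frac{2N}{N+2}}$ against something, or if $R/\de$ is replaced by a fixed radius. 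The resolution is that in \eqref{est Uconj} the relevant computation keeps the prefactor $\de^{\frac{4N}{N+2}}$ and bounds the integral \emph{over $\R$ restricted away from a $\de$-ball} — but since $\Om$ has fixed size, the correct reading is $\de^{\frac{4N}{N+2}} \cdot C$, so that $|\U_{\de,\xi}|_{\frac{2N}{N+2},\Om} \le C\de^{\frac{4N/(N+2)}{2N/(N+2)}} = C\de^2$, \emph{where the integral $\int_{B_{R/\de}} \U_{1,0}^{\frac{2N}{N+2}}$ is instead bounded by a constant} — this is true precisely when $\frac{(N-2)N}{N+2} < N$ fails, so the integral must be bounded differently: one uses $\U_{1,0}(y)^{\frac{2N}{N+2}} \le \U_{1,0}(y)^{\frac{2N}{N+2}}$ and the \emph{finite measure} of $(\Om-\xi)/\de$ is $\le C\de^{-N}$... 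I am going in circles.

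The clean way, which I would write up, is: since $q := \frac{2N}{N+2} = \frac{p+1}{2}$ lies in $(\frac{p+1}{2}, p+1]$... actually it equals the left endpoint; the Lemma hypothesis is $q \in (\frac{p+1}{2}, p+1]$ so we apply the general fact with $q$ there and specialize. For general $q$ in that range, $\U_{1,0}^q$ decays like $|y|^{-(N-2)q}$ with $(N-2)q > N$ iff $q > \frac{N}{N-2} = \frac{p+1}{2}$, so the integral $\int_\R \U_{1,0}^q$ converges, and the change of variables gives $|\U_{\de,\xi}|_{q,\Om}^q = \de^{N - \frac{(N-2)q}{1}\cdot\frac{q}{q}}$... let me just do it: $|\U_{\de,\xi}|_{q,\Om}^q = \de^{-\frac{(N-2)q}{2}} \int_\Om \U_{1,0}((x-\xi)/\de)^q dx = \de^{N - \frac{(N-2)q}{2}} \int_{(\Om-\xi)/\de} \U_{1,0}^q \le \de^{N-\frac{(N-2)q}{2}} \|\U_{1,0}\|_{L^q(\R)}^q$, hence $|\U_{\de,\xi}|_{q,\Om} \le C \de^{\frac Nq - \frac{N-2}{2}}$. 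Plugging $q = \frac{2N}{N+2}$: $\frac Nq - \frac{N-2}{2} = \frac{N+2}{2} - \frac{N-2}{2} = 2$, giving \eqref{est Uconj}. The key nontrivial input is therefore only that $\int_\R \U_{1,0}^q < \infty$, which holds because $q > \frac{p+1}{2}$ — and this is exactly why the hypothesis $q \in (\frac{p+1}{2}, p+1]$ and $N > 6$ are imposed. Finally, \eqref{psi conj} follows verbatim: $\psi^j$ decays like $|x|^{-(N-1)}$, which is even faster than $\U_{1,0}$'s $|x|^{-(N-2)}$, so $\int_\R |\psi^j|^{\frac{2N}{N+2}} < \infty$ (indeed $(N-1)\frac{2N}{N+2} > N$ for $N \ge 2$), and the same scaling $\psi^j_{\de,\xi}(x) = \de^{-(N-2)/2}\psi^j((x-\xi)/\de)$ gives $|\psi^j_{\de,\xi}|_{\frac{2N}{N+2},\Om} \le C\de^{\frac{N+2}{2} - \frac{N-2}{2}} = C\de^2$.

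The main obstacle — more a matter of care than of difficulty — is tracking the exponents of $\de$ through the scaling change of variables and confirming the borderline integrability: the whole content of the Lemma is that $\U_{1,0}^q$ and $|\psi^j|^{\frac{2N}{N+2}}$ are integrable on $\R$ in the stated range, and that the power of $\de$ produced by scaling is exactly $\frac Nq - \frac{N-2}{2}$, which evaluates to $2$ at $q = \frac{2N}{N+2}$. Once integrability is granted one may extend all integrals from $(\Om-\xi)/\de$ to $\R$, and the boundedness of $\Om$ (hence of the $2^\star$-integral) and the smallness bounds on the parameters play no essential role beyond ensuring the constant $C$ is uniform.
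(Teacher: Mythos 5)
The paper states this lemma without proof (it is recalled as a standard fact, essentially from \cite{va} and \cite{rey1}), so the benchmark is the standard scaling argument --- which is exactly what your final ``clean'' computation does, and that computation is correct: $|\U_{\de,\xi}|_{q,\Om}^q=\de^{\,N-\frac{(N-2)q}{2}}\int_{(\Om-\xi)/\de}\U_{1,0}^q\le \de^{\,N-\frac{(N-2)q}{2}}\int_{\R}\U_{1,0}^q$, and the exponent $\frac Nq-\frac{N-2}{2}$ equals $2$ at $q=\frac{2N}{N+2}$, with the same scaling giving \eqref{psi conj} and scale invariance giving \eqref{est U2star}.

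However, the writeup goes wrong precisely at the crux before righting itself, and you should fix this. First, the long middle detour rests on a miscomputed exponent: $\U_{1,0}^{\frac{2N}{N+2}}$ decays like $|y|^{-\frac{2N(N-2)}{N+2}}$, not $|y|^{-\frac{N(N-2)}{N+2}}$, and $\frac{2N(N-2)}{N+2}>N$ is equivalent to $N>6$; so under the lemma's hypothesis the integral $\int_{\R}\U_{1,0}^{\frac{2N}{N+2}}$ \emph{does} converge, there is no divergence to compensate, and the entire discussion of cutting off at $R/\de$ is both unnecessary and incorrect. Second, your statement that $q=\frac{2N}{N+2}$ ``equals the left endpoint $\frac{p+1}{2}$'' is false for $N>6$: since $p+1=\frac{2N}{N-2}$, one has $\frac{p+1}{2}=\frac{N}{N-2}$ and $\frac{2N}{N+2}>\frac{N}{N-2}\iff N>6$, with equality only at $N=6$. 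This strict inequality is the one fact your clean computation silently uses (it is what makes $\int_{\R}\U_{1,0}^{\frac{2N}{N+2}}$ finite) and is the only place the dimension restriction enters, so it must be stated and verified rather than asserted after having claimed the contrary. Finally, a minor point: for $j=0$ the function $\psi^0$ decays like $|x|^{-(N-2)}$, the same rate as $\U_{1,0}$, not $|x|^{-(N-1)}$; the bound \eqref{psi conj} for $j=0$ therefore follows from the same integrability check as \eqref{est Uconj} (again using $N>6$), while your faster-decay remark covers only $j=1,\dots,N$. With these corrections the proof is complete and is the standard argument the paper has in mind.
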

	
	Let us denote by $G(x, y)$ the Green's function of the Laplace operator with Dirichlet boundary condition and let $H(x, y)$ be its regular part, namely 
	$$H(x, y)=\gamma_N\left(\frac{1}{|x-y|^{N-2}}-G(x, y)\right), \quad\forall\,\, x, y\in\Omega,\,\, \hbox{with}\,\, \gamma_N=\frac{1}{(N-2)| \mathbb S^{N-1}|}.$$ 
 {We remark that
	in \cite{rey1} it was shown that for every $\xi\in\Omega$
	$$H(\xi, \xi)=\frac{1}{2^{N-2}{\rm dist}(\xi, \partial\Omega)^{N-2}}+ {o\left(\frac{1}{{\rm dist}(\xi, \partial\Omega)^{N-2}}\right)}.$$
 By using the previous observations we get that $$H(\xi_h, \xi_i)=\frac{1}{2^{N-2}}\frac{1}{\eta_i^{N-2}} + o \(\frac{1}{\eta_i^{N-2}}\)$$ where \beq \eta_i = \dist(\xi_i,\partial \Om) = \e^\beta t_0+\e^{\tilde\beta}t_i .\label{etai}\eeq
	}

	\begin{lem} \label{yan0}
		Let $\de\in(0,\infty)$, $\xi\in \Om$ and $\varphi_{\de,\xi}=\U_{\de,\xi}-P\U_{\de,\xi}$. We have
		\[ \varphi_{\de,\xi}(x) =\alpha_N\de^\frac{N-2}{2}H(\xi,x)+ \O\(\frac{\de^\frac{N+2}{2}}{\dist(\xi,\partial\Om)^N}\) 
  \]
  and \[ 0 \leq \varphi_{\de,\xi}\leq \U_{\de,\xi}.
  \]
		Moreover
		\[ \psi^0_{\de,\xi} - P\psi^0_{\de,\xi} = \alpha_N\frac{N-2}{2}\de^\frac{N-2}{2}H(\xi,x)+ \O\(\frac{\de^\frac{N+2}{2}}{\dist(\xi,\partial\Om)^N}\) \] and for all $j=1,\cdots,N$
		\[ \psi^j_{\de,\xi} - P\psi^j_{\de,\xi} = \alpha_N\de^\frac{N}{2} \frac{\partial H}{\partial \xi_j}(\xi,x)+ \O\(\frac{\de^\frac{N+4}{2}}{\dist(\xi,\partial\Om)^{N}}\). \]
	\end{lem}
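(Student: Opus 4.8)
The plan is to estimate each of the quantities $\varphi_{\de,\xi} = \U_{\de,\xi}-P\U_{\de,\xi}$, $\psi^0_{\de,\xi}-P\psi^0_{\de,\xi}$, and $\psi^j_{\de,\xi}-P\psi^j_{\de,\xi}$ by writing the difference as the unique solution of a Dirichlet problem on $\Om$ and comparing it with an explicit harmonic function built from the regular part $H$ of the Green's function. First I would recall that by definition of the projection, $\varphi_{\de,\xi}$ solves $\Delta \varphi_{\de,\xi}=0$ in $\Om$ with boundary data $\varphi_{\de,\xi}=\U_{\de,\xi}$ on $\partial\Om$; in particular, since $\U_{\de,\xi}>0$ and $\varphi_{\de,\xi}$ is harmonic, the maximum principle immediately yields $0\le\varphi_{\de,\xi}\le\U_{\de,\xi}$ in $\Om$ (the upper bound because $\U_{\de,\xi}-\varphi_{\de,\xi}=P\U_{\de,\xi}$ is superharmonic and nonnegative on the boundary). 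For the asymptotic formula, I would compare $\varphi_{\de,\xi}(x)$ with $h(x):=\alpha_N\de^{\frac{N-2}{2}}H(\xi,x)$. Using the classical expansion $\U_{\de,\xi}(y)=\alpha_N\de^{\frac{N-2}{2}}|y-\xi|^{-(N-2)}\bigl(1+O(\de^2/|y-\xi|^2)\bigr)$ valid for $y\in\partial\Om$ (where $|y-\xi|\ge\dist(\xi,\partial\Om)$), and the identity $H(\xi,y)=\gamma_N|y-\xi|^{-(N-2)}$ for $y\in\partial\Om$, one sees that on $\partial\Om$ the harmonic function $\varphi_{\de,\xi}-h$ has boundary values of size $O\bigl(\de^{\frac{N+2}{2}}\dist(\xi,\partial\Om)^{-N}\bigr)$; the maximum principle then propagates this bound to all of $\Om$, giving the stated $\O\bigl(\de^{\frac{N+2}{2}}/\dist(\xi,\partial\Om)^{N}\bigr)$ remainder.

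Next I would treat $\psi^0_{\de,\xi}$. The key algebraic observation is that $\psi^0_{\de,\xi}$ is, up to a constant, the derivative of $\U_{\de,\xi}$ with respect to the scaling parameter: a direct computation gives $\psi^0_{\de,\xi}=\de\,\partial_\de \U_{\de,\xi}$ (after matching normalizations with the definition of $\psi^0$), so $\psi^0_{\de,\xi}-P\psi^0_{\de,\xi}=\de\,\partial_\de\varphi_{\de,\xi}$. Differentiating the expansion for $\varphi_{\de,\xi}$ with respect to $\de$ — which is legitimate since all the estimates are locally uniform and the harmonicity is preserved under $\partial_\de$ — produces $\de\,\partial_\de\bigl(\alpha_N\de^{\frac{N-2}{2}}H(\xi,x)\bigr)=\alpha_N\frac{N-2}{2}\de^{\frac{N-2}{2}}H(\xi,x)$ as the main term, with the remainder differentiated to the same order $\O\bigl(\de^{\frac{N+2}{2}}/\dist(\xi,\partial\Om)^N\bigr)$. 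Similarly, for $j=1,\dots,N$ one has $\psi^j_{\de,\xi}=-\de\,\partial_{\xi_j}\U_{\de,\xi}$ up to normalization, hence $\psi^j_{\de,\xi}-P\psi^j_{\de,\xi}=-\de\,\partial_{\xi_j}\varphi_{\de,\xi}$; differentiating the leading term $\alpha_N\de^{\frac{N-2}{2}}H(\xi,x)$ in $\xi_j$ and multiplying by $\de$ yields $\alpha_N\de^{\frac{N}{2}}\frac{\partial H}{\partial\xi_j}(\xi,x)$, with remainder of order $\O\bigl(\de^{\frac{N+4}{2}}/\dist(\xi,\partial\Om)^{N}\bigr)$ after accounting for the extra factor of $\de$.

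The main technical obstacle is making the differentiation-in-parameters step fully rigorous: one must justify that $\partial_\de$ and $\partial_{\xi_j}$ commute with taking the harmonic extension and that they act on the $\O(\cdot)$ remainder by producing a remainder of the expected order (rather than losing a power of $\de$ or of $\dist(\xi,\partial\Om)$). The clean way to do this is to avoid differentiating the $\O$-term directly: instead, redo the boundary-comparison argument for $\psi^0_{\de,\xi}-P\psi^0_{\de,\xi}$ and $\psi^j_{\de,\xi}-P\psi^j_{\de,\xi}$ from scratch, using the explicit formulas for $\psi^0,\psi^j$ to expand their restrictions to $\partial\Om$ against the corresponding explicit expansions of $H$ and $\partial H/\partial\xi_j$ on $\partial\Om$, and then invoke the maximum principle for the harmonic difference exactly as in the first step. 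Both routes give the same conclusion; I would present the direct boundary-comparison argument as the cleanest. All of these computations are standard Rey-type estimates (cf. \cite{rey1}), so the proof reduces to carefully tracking the powers of $\de$ and of $\dist(\xi,\partial\Om)$ in each boundary expansion.
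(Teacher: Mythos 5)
Your argument is correct and is essentially the paper's: the paper disposes of this lemma simply by citing Proposition~1 of Rey \cite{rey1}, and the harmonic-extension plus maximum-principle comparison you outline (each difference is harmonic in $\Om$ with boundary data $\U_{\de,\xi}$, resp.\ $\psi^j_{\de,\xi}$, which you expand against $H(\xi,\cdot)$ on $\partial\Om$ and then propagate inside) is precisely the standard proof of that proposition, including your preferred route of redoing the boundary comparison for $\psi^0,\psi^j$ rather than differentiating the remainder in the parameters. Two small normalization points to fix when writing it up: with the paper's definitions one actually has $\psi^0_{\de,\xi}=-\de\,\partial_\de\U_{\de,\xi}$ and $\psi^j_{\de,\xi}=-\de\,\partial_{\xi_j}\U_{\de,\xi}$ (mind the signs), and on $\partial\Om$ the regular part satisfies $H(\xi,y)=|y-\xi|^{2-N}$ in Rey's normalization (no factor $\gamma_N$), since otherwise the leading boundary terms in your comparison would not cancel; with these conventions your estimates of the boundary values as $\O\bigl(\de^{\frac{N+2}{2}}\dist(\xi,\partial\Om)^{-N}\bigr)$, resp.\ $\O\bigl(\de^{\frac{N+4}{2}}\dist(\xi,\partial\Om)^{-N}\bigr)$, go through as stated.
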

	\begin{proof}
		See Proposition 1 in \cite{rey1}. 
	\end{proof}
	
	In particular Lemma \ref{yan0} implies that $P\U_{\de,\xi} \geq 0$ and that there exists a positive constant $C$ such that
	\beq \label{varphi infty}|\varphi_{\de,\xi}|_{\infty,\Om} \leq C \frac{\de^\frac{N-2}{2}}{\dist(\xi,\partial\Om)^{N-2}} . \eeq
	
	In \cite{rey1} it is also shown that \beq \label{varphi 2star}|\varphi_{\delta,\xi}|_{{2^\star},\Omega}\lesssim \frac{\delta^{\frac{N-2}{2}}}{{\rm dist}(\xi, \partial\Omega)^{\frac{N-2}{2}}}.\eeq
	More generally we can estimate the $L^q(\Omega)$ norm of $\varphi_{\de,\xi}$ and the $L^\frac{2N}{N-2}(\Omega)$ of $P\psi_{\de,\xi}^j-\psi_{\de,\xi}^j$.
	\begin{lem} \label{psi diff}
		Let $N>6$, $q \in \left(\frac{p+1}{2},p+1\right]$ and $\bm\de,\bm\xi$ satisfy \eqref{deltai} and \eqref{par} {as in Section \ref{ans}}.  Then there exists a constant $C= C(a)>0$  and $\eps_0>0$ such that for any $\eps\in (0,\eps_0)$ for all $i=1,\cdots,k$ and $j=0,\cdots,N$ it holds \[|\varphi_{\de_i,\xi_i}|_{q,\Om}\leq C \eps^{\frac{\theta}{2}}\] and  
  \[|P\psi_i^j-\psi_i^j|_{\frac{2N}{N-2},\Om}\leq C \eps^\sigma\] for some $\sigma>0$ arbitrarily small.
	\end{lem}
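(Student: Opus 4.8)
The plan is to estimate the two quantities separately, using the pointwise and $L^{2^\star}$ bounds from Lemma~\ref{yan0} together with the scaling relations~\eqref{deltai} and the distance formula~\eqref{etai}. For the first estimate, recall that $0\le\varphi_{\de_i,\xi_i}\le\U_{\de_i,\xi_i}$, so I would interpolate: for $q\in(\frac{p+1}{2},p+1]$ write $q=\theta_0\cdot 2^\star+(1-\theta_0)\cdot\infty$ in the exponent sense and combine~\eqref{varphi 2star} with~\eqref{varphi infty}. Concretely,
$$
|\varphi_{\de_i,\xi_i}|_{q,\Om}^q=\int_\Om\varphi_{\de_i,\xi_i}^q\leq |\varphi_{\de_i,\xi_i}|_{\infty,\Om}^{q-2^\star}\int_\Om\varphi_{\de_i,\xi_i}^{2^\star}=|\varphi_{\de_i,\xi_i}|_{\infty,\Om}^{q-2^\star}|\varphi_{\de_i,\xi_i}|_{2^\star,\Om}^{2^\star},
$$
which is legitimate since $q>2^\star/ \text{(something)}$ — more precisely I would split the exponent as $q - \frac{p+1}{2}\cdot(\text{a convenient value})$; the point is that for $q$ in this range one gets a power of $\de_i^{(N-2)/2}/\dist(\xi_i,\partial\Om)^{(N-2)/2}$ raised to a positive exponent at least as large as would come from $q=\frac{p+1}{2}$. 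Plugging $\de_i\sim\eps^\alpha d_0$ and $\dist(\xi_i,\partial\Om)=\eta_i\sim\eps^\beta t_0$ from~\eqref{etai} gives
$$
\frac{\de_i^{\frac{N-2}{2}}}{\eta_i^{\frac{N-2}{2}}}\sim\eps^{(\alpha-\beta)\frac{N-2}{2}}=\eps^{\theta/2}
$$
by the definition~\eqref{theta} of $\theta$, and since the interpolated exponent is at least $1$ for $q$ in the stated range, one obtains $|\varphi_{\de_i,\xi_i}|_{q,\Om}\leq C\eps^{\theta/2}$. I would need to check that the worst case (smallest exponent, i.e. $q$ closest to $\frac{p+1}{2}$) still yields exactly the power $\eps^{\theta/2}$ and not something smaller — this amounts to verifying that $\frac{q}{2^\star}\cdot\frac{2^\star(N-2)}{2\cdot q}\cdot\frac{1}{(\text{normalization})}\ge \theta/2$; with $q\le p+1=2^\star$ the $L^{2^\star}$ estimate~\eqref{varphi 2star} alone already suffices and gives precisely $\eps^{\theta/2}$, so this is the clean route.

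For the second estimate I would use the expansion of $\psi^j_{\de,\xi}-P\psi^j_{\de,\xi}$ from Lemma~\ref{yan0}. The main term is $\alpha_N\de_i^{N/2}\frac{\partial H}{\partial\xi_j}(\xi_i,\cdot)$ (for $j=1,\dots,N$) or $\alpha_N\frac{N-2}{2}\de_i^{(N-2)/2}H(\xi_i,\cdot)+\O(\cdot)$ for $j=0$. Since $H(\xi_i,\cdot)$ and its derivatives are bounded in $L^\infty(\Om)$ by negative powers of $\dist(\xi_i,\partial\Om)$, and $\Om$ has finite measure, the $L^{2N/(N-2)}(\Om)$-norm of $H(\xi_i,\cdot)$ is controlled by $C/\dist(\xi_i,\partial\Om)^{N-2}$ up to constants (indeed the singularity of $\partial H/\partial\xi_j$ in the interior is integrable in $L^{2N/(N-2)}$ once $N\ge 7$; more simply one uses $|H(\xi_i,x)|\lesssim \eta_i^{-(N-2)}$ from the formula recalled before Lemma~\ref{yan0}). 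Thus for $j=0$ one gets
$$
|P\psi_i^0-\psi_i^0|_{\frac{2N}{N-2},\Om}\lesssim \de_i^{\frac{N-2}{2}}\eta_i^{-(N-2)}\sim\eps^{\alpha\frac{N-2}{2}-\beta(N-2)},
$$
and the exponent $\frac{N-2}{2}\alpha-(N-2)\beta=\frac{N-2}{2}(\alpha-2\beta)$ should be checked to be positive (using $\alpha=\frac{2(N-1)}{N^2-6N+4}$, $\beta=\frac{N-2}{N^2-6N+4}$, so $\alpha-2\beta=\frac{2(N-1)-2(N-2)}{N^2-6N+4}=\frac{2}{N^2-6N+4}>0$ for $N\ge7$), hence this is $\eps^{\sigma_0}$ for a definite $\sigma_0>0$; one then takes any $\sigma\in(0,\sigma_0)$. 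For $j=1,\dots,N$ the power is even larger ($\de_i^{N/2}$ instead of $\de_i^{(N-2)/2}$). In both cases one also has to verify the remainder terms $\O(\de_i^{(N+2)/2}/\eta_i^N)$ resp. $\O(\de_i^{(N+4)/2}/\eta_i^N)$ are of strictly higher order in $\eps$, which follows from $\alpha>0$ since raising $\de_i$ to a larger power and $\eta_i$ to a comparable power only improves the estimate.

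The main obstacle, and the place where care is genuinely needed, is the $L^{2N/(N-2)}$ estimate of the gradient-of-regular-part terms: $\partial H/\partial\xi_j(\xi_i,\cdot)$ has, besides the boundary blow-up as $\dist\to0$, an interior singularity inherited from differentiating $|x-\xi_i|^{-(N-2)}$, which behaves like $|x-\xi_i|^{-(N-1)}$ near $x=\xi_i$; this is in $L^{2N/(N-2)}$ precisely when $\frac{2N}{N-2}(N-1)<N$ — false — so one cannot naively integrate the pointwise bound. The correct resolution is that $H(x,y)$ is \emph{smooth} on $\Om\times\Om$ (it is the harmonic corrector; the singularity is only in $G$, not in $H$), so $\partial H/\partial\xi_j$ is bounded on compact subsets and its $L^\infty(\Om)$-norm is genuinely $\O(\eta_i^{-(N-1)})$ or $\O(\eta_i^{-(N-2)})$ up to the boundary, with no interior singularity. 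Once this is correctly invoked (citing the regularity of $H$ and the behaviour near the boundary from \cite{rey1}, as in the displayed estimate for $H(\xi_h,\xi_i)$ above), the rest is a bookkeeping exercise in the exponents $\alpha,\beta,\theta$; I would also double-check, for the $\varphi$ estimate, that the interpolation exponent associated with the smallest admissible $q$ does not dip below what is needed, though as noted the bound~\eqref{varphi 2star} handles the whole range $q\le 2^\star$ directly and the case $q>2^\star$ does not arise since $q\le p+1=2^\star$.
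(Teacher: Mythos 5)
Your proposal is correct, but it takes a different route from the paper only in the sense that the paper does not argue at all: its proof of this lemma is a one-line citation of Lemma 2.2 in \cite{va}, whereas you reconstruct the estimates directly. Your first bound is the clean one: since $q\le p+1=2^\star$ and $\Om$ is bounded, \eqref{varphi 2star} together with $\de_i\sim \eps^{\alpha}$ and $\eta_i=\dist(\xi_i,\partial\Om)\sim\eps^{\beta}$ from \eqref{deltai}--\eqref{etai} gives $|\varphi_{\de_i,\xi_i}|_{q,\Om}\lesssim \de_i^{(N-2)/2}\eta_i^{-(N-2)/2}\sim\eps^{\theta/2}$ by \eqref{theta}, uniformly in the range $q\in(\frac{p+1}{2},p+1]$. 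Your second bound correctly combines the expansions of Lemma \ref{yan0} with the maximum-principle bound $0\le H(\xi_i,\cdot)\le C\eta_i^{-(N-2)}$, the smoothness of the regular part $H$ (so that $\nabla_\xi H(\xi_i,\cdot)$ has no interior singularity and is $O(\eta_i^{-(N-1)})$ in $L^\infty(\Om)$), and the sign check $\alpha-2\beta=\frac{2}{N^2-6N+4}>0$; the remainders are the leading terms times $(\de_i/\eta_i)^2=\eps^{2(\alpha-\beta)}\to0$, so everything is a positive power of $\eps$, which is all the statement asks. What your approach buys is self-containedness, at the cost of exponent bookkeeping that the paper outsources. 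Two small warts to fix in writing it up: (i) the displayed ``interpolation'' inequality $\int_\Om\varphi^q\le|\varphi|_{\infty,\Om}^{q-2^\star}\int_\Om\varphi^{2^\star}$ is reversed when $q<2^\star$ (pointwise $\varphi^{q-2^\star}\ge|\varphi|_{\infty,\Om}^{q-2^\star}$), so it should simply be deleted in favour of the embedding $L^{2^\star}(\Om)\subset L^{q}(\Om)$ that you ultimately invoke; (ii) for $j=1,\dots,N$ the justification ``the power is even larger because $\de_i^{N/2}$ replaces $\de_i^{(N-2)/2}$'' is incomplete, since the $L^\infty$ bound simultaneously worsens from $\eta_i^{-(N-2)}$ to $\eta_i^{-(N-1)}$; the net exponent is $\frac N2\alpha-(N-1)\beta=\frac{2(N-1)}{N^2-6N+4}=\alpha>0$, so the conclusion stands, but this one line of arithmetic should be made explicit.
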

	\begin{proof}
		The proof easily follows from Lemma 2.2 in \cite{va}.
	\end{proof}
 
	Now let us define the operators
	\[\Pi (u) = \sum_{i=1}^k \sum_{j=0}^N\l u,P\psi_i^j \r P\psi_i^j\] and
	\[\Pi^\perp (u) = u-\Pi(u) .\]
	Moreover the problem \eqref{prob} is equivalent to solve the system
	\begin{equation}\label{eq pi}
		\Pi \{ u_\eps -\i\(f(u_\eps)+\eps u_\eps\) \} =0
	\end{equation} and
	\begin{equation}\label{eq piperp}
		\Pi^\perp \{ u_\eps -\i\(f(u_\eps)+\eps u_\eps\) \}=0.
	\end{equation}

 Let us also define the linear operator
	\beq \label{L def}\L_{\bm\de,\bm\xi}\phi = \Pi^\perp\left\{\phi-\i\left[ f'\(u_0-\sum_{h=1}^k \Uh\)\phi \right]\right\} , \eeq
	the error term	
	\begin{equation} \label{err}
		\E_{\bm\de,\bm\xi} = \Pi^\perp \left\{\Wd -\i \left[ f\(\Wd\)+\eps\Wd \right]\right\} 
	\end{equation}
	and the nonlinear terms
	\begin{align}
		&\mathcal N^1_{\bm\de,\bm\xi}\phi =\Pi^\perp \{\i \left[f(\Wd+\phi) - f(\Wd)-f'(\Wd)\phi +\eps\phi \ \right]\},  \label{N1}\\
		&\mathcal N^2_{\bm\de,\bm\xi}\phi =\Pi^\perp \left\{\i \left[\(f'\(u_0-\sum_{h=1}^k P\Uh\) - f'\(u_0-\sum_{h=1}^k \Uh\)\)\right]\phi\right\},\label{N2} 
	\end{align}
	where \normalcolor{$f(u)=|u|^{p-1}u$}.

	\begin{oss}
		We want to stress that $\Pi^\perp_{\bm\de,\bm\xi}$ is a continuous map, i.e. there exists a constant $C>0$ such that for any $\eps>0$, $\bm\de\in\RR^k$, $\bm\xi\in\Om^k$ it holds
		\[ \|\Pi^\perp_{\bm\de,\bm\xi} u\|_{\H}\leq C \|u\|_\H \quad \mbox{ for all } \quad u\in\H.\]
	\end{oss}
	
	Two important inequalities we will use throughout all our work are the following.
	\begin{lem}\label{yan} For all $a>0$ and $b\in \RR$, we have
		\[||a+b|^r-a^r| \leq \begin{cases}
			C(r)\min\{a^{r-1}b, b^r\} &\mbox{ if } r<1,\\
			C(r)\(a^{r-1}b+b^r\) &\mbox{ if } r\geq 1
		\end{cases} \]
		and 
		\[||a+b|^r(a+b)-a^{r+1}-(1+r)a^rb| \leq \begin{cases}
			C(r)\min\{|b|^{r+1}, a^{r-1}b^2\} &\mbox{ if } 0\leq r<1,\\
			C(r)\(|b|^{r+1},a^{r-1}b^2\) &\mbox{ if } r\geq 1.
		\end{cases} \]
	\end{lem}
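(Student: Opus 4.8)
The plan is to establish both inequalities by elementary one‑variable estimates, proving first the bound on $\big||a+b|^r-a^r\big|$ and then obtaining the second bound by integrating the first. I do not expect a real obstacle here; the only delicate point is the degeneracy of $t\mapsto t^r$ near $t=0$ when $r<1$, which forces a case distinction according to the size of $|b|$ compared with $a$.

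For the first inequality I would distinguish two regimes. If $|b|\leq a/2$, then $a+b\geq a/2>0$, so $|a+b|^r=(a+b)^r$, and the mean value theorem applied to $t\mapsto t^r$ on the interval with endpoints $a$ and $a+b$ gives $\big||a+b|^r-a^r\big|=r\,\xi^{\,r-1}|b|$ for some $\xi\in[a/2,2a]$; since $\xi^{\,r-1}\leq C(r)\,a^{r-1}$ whether $r<1$ or $r\geq 1$, this gives $\big||a+b|^r-a^r\big|\leq C(r)\,a^{r-1}|b|$. If $|b|>a/2$ and $r<1$, I would use the H\"older-type inequality $|x^r-y^r|\leq|x-y|^r$ valid for $x,y\geq 0$ and $0<r\leq 1$ (which follows from $u^r+(1-u)^r\geq 1$ on $[0,1]$) together with $\big||a+b|-a\big|\leq|b|$ to get $\big||a+b|^r-a^r\big|\leq|b|^r$, and moreover $|b|^r=|b|^{r-1}|b|\leq(a/2)^{r-1}|b|=C(r)\,a^{r-1}|b|$, so in fact $\big||a+b|^r-a^r\big|\leq C(r)\min\{a^{r-1}|b|,\,|b|^r\}$ in this range; combining with the previous case proves the claim for $r<1$. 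If $r\geq 1$, I would instead split on the sign of $a+b$: when $a+b\geq 0$ the mean value theorem bounds $\big|(a+b)^r-a^r\big|$ by $r\,\xi^{\,r-1}|b|$ with $0\leq\xi\leq a+|b|$, hence by $C(r)\big(a^{r-1}|b|+|b|^r\big)$; when $a+b<0$ one has $a<|b|$ and the crude bound $\big||a+b|^r-a^r\big|\leq|a+b|^r+a^r\leq|b|^r+a^{r-1}|b|$. This covers the case $r\geq 1$.

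For the second inequality, set $g(t)=|t|^r t$; since $r>0$ we have $g\in C^1(\RR)$ with $g'(t)=(r+1)|t|^r$, and $g(a)=a^{r+1}$, $g'(a)b=(1+r)a^r b$. I would then write
\[
|a+b|^r(a+b)-a^{r+1}-(1+r)a^r b=\int_0^b\big(g'(a+s)-g'(a)\big)\,ds=(r+1)\int_0^b\big(|a+s|^r-a^r\big)\,ds
\]
and insert the first inequality (with $|s|\leq|b|$). When $0\leq r<1$, using $\big||a+s|^r-a^r\big|\leq C(r)\,a^{r-1}|s|$ and integrating gives the bound $C(r)\,a^{r-1}b^2$, while using $\big||a+s|^r-a^r\big|\leq C(r)\,|s|^r$ and integrating gives $C(r)\,|b|^{r+1}$; together these give $C(r)\min\{|b|^{r+1},a^{r-1}b^2\}$. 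When $r\geq 1$, inserting $\big||a+s|^r-a^r\big|\leq C(r)\big(a^{r-1}|s|+|s|^r\big)$ and integrating gives $C(r)\big(a^{r-1}b^2+|b|^{r+1}\big)$, which is the asserted estimate (the right-hand side being read as the sum). The case $r=0$ is trivial since then both sides vanish. Since these are classical facts one could also simply cite them, but the argument above is short and self-contained.
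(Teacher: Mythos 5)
Your proof is correct, but note that the paper does not prove this lemma at all: it simply cites Lemma 2.2 in \cite{yan}, so your argument is a genuinely self-contained alternative. Your route is the natural one: for the first estimate, the mean value theorem in the regime $|b|\leq a/2$ (where $\xi$ is comparable to $a$, and where $a^{r-1}|b|$ is indeed the smaller of the two quantities when $r<1$, so the minimum bound follows), the subadditivity inequality $|x^r-y^r|\leq |x-y|^r$ together with $\big||a+b|-a\big|\leq |b|$ in the regime $|b|>a/2$ for $r<1$, and the sign splitting for $r\geq 1$; for the second estimate, the representation $g(a+b)-g(a)-g'(a)b=(r+1)\int_0^b\big(|a+s|^r-a^r\big)\,ds$ with $g(t)=|t|^rt$ cleanly reduces it to the first, and the case $r=0$ is indeed trivial. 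Two small points of interpretation, both handled correctly by you: the statement's $b^r$, $a^{r-1}b$ must be read with $|b|$ in place of $b$ (otherwise they are meaningless for $b<0$ and non-integer $r$), and the expression $C(r)\big(|b|^{r+1},a^{r-1}b^2\big)$ in the case $r\geq 1$ is a typo for the sum $C(r)\big(|b|^{r+1}+a^{r-1}b^2\big)$, which is exactly what your integration yields. What your approach buys is independence from the cited reference and an argument that makes transparent where the constants $C(r)$ come from; what the citation buys the authors is brevity, since these are standard elementary inequalities in this literature.
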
 \begin{proof}
		See Lemma 2.2 in \cite{yan}.
	\end{proof}
	
\begin{color}{blue}	
	
	\end{color}
	
	Now we want to solve \eqref{eq piperp}. For this aim we start proving the invertibility of $\L_{\bm\de,\bm\xi}$ defined in \eqref{L def}.
	\begin{prop} \label{prop L inv}
		Let $N>6$ and $\bm\de,\bm\xi$ satisfy \eqref{deltai} and \eqref{par} {as in Section \ref{ans}}. Then 
		there exists $C= C(a)>0$ and $\eps_0>0$ such that for any $\eps \in (0,\eps_0)$ 
		it holds \begin{equation}\label{L in}\|\L_{\bm\de,\bm\xi}(\phi)\|_\H\geq C\|\phi\|_{\H} \mbox{ for any } \phi \in K_{\bm\de,\bm\xi}^\perp .\end{equation} 
		Furthermore, the operator $\L_{\bm\de,\bm\xi}$ is invertible and its inverse is continuous.
	\end{prop}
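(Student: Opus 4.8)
The plan is the standard contradiction/compactness argument for linearized operators around a sum of bubbles, adapted to the cluster configuration. Suppose \eqref{L in} fails. Then there are sequences $\eps_n \to 0$, parameters $\bm\de_n, \bm\xi_n$ of the form \eqref{deltai} satisfying \eqref{par}, and functions $\phi_n \in K_{\bm\de_n,\bm\xi_n}^\perp$ with $\|\phi_n\|_{\H} = 1$ such that $\L_{\bm\de_n,\bm\xi_n}(\phi_n) =: h_n$ with $\|h_n\|_{\H} \to 0$. Unwinding the definition \eqref{L def} and applying $\Pi_{\bm\de_n,\bm\xi_n}^\perp$, we can write
\[
\phi_n - \i\!\left[ f'\!\Big(u_0 - \textstyle\sum_{h=1}^k \U_h\Big)\phi_n\right] = h_n + \sum_{i=1}^k\sum_{j=0}^N c_{ij}^n\, P\psi_i^j
\]
for scalars $c_{ij}^n$. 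The first step is to show $\sum_{ij}|c_{ij}^n| \to 0$: testing the identity against each $P\psi_i^j$, using the near-orthogonality of the $P\psi_i^j$ (which holds because $|\tau_i-\tau_h|>\rho$ keeps the bubbles at mutually separated scales, cf. Remark \ref{distxi} and \eqref{diff xi}), the bounds of Lemma \ref{psi diff}, and \eqref{ibound}, one gets a nearly-diagonal linear system for the $c_{ij}^n$ with right-hand side $o(1)$, hence $c_{ij}^n = o(1)$.

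The second step is the local blow-up analysis. For each fixed $i$, rescale: set $\tilde\phi_n^i(y) = \de_{i,n}^{(N-2)/2}\,\phi_n(\de_{i,n} y + \xi_{i,n})$. Since $\de_{i,n} = \eps_n^\alpha d_0(1+o(1))$ and the mutual distances $|\xi_{i,n}-\xi_{h,n}| \sim \eps_n^{\hat\beta}|\tau_i - \tau_h|$ with $\hat\beta < \alpha$ force $|\xi_{i,n}-\xi_{h,n}|/\de_{i,n} \to \infty$, the other bubbles and their projections disappear in this scale, and $H^1_0(\Om)$-boundedness of $\phi_n$ gives $\tilde\phi_n^i \rightharpoonup \tilde\phi^i$ weakly in $\mathcal D^{1,2}(\R)$, satisfying the limit equation $-\Delta \tilde\phi^i = p\,\U_{1,0}^{p-1}\tilde\phi^i$ in $\R$. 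The orthogonality $\langle \phi_n, P\psi_i^j\rangle = 0$ passes to the limit (again using Lemma \ref{psi diff} to replace $P\psi_i^j$ by $\psi_i^j$ up to $\eps_n^\sigma$) and yields $\langle \tilde\phi^i, \psi^j\rangle_{\mathcal D^{1,2}} = 0$ for all $j=0,\dots,N$; by the non-degeneracy of $\U_{1,0}$ (the kernel is exactly $\vspan\{\psi^0,\dots,\psi^N\}$) we conclude $\tilde\phi^i \equiv 0$. Likewise, one checks $\phi_n \rightharpoonup \phi^\infty$ in $H^1_0(\Om)$ with $-\Delta \phi^\infty = p\,u_0^{p-1}\phi^\infty$ in $\Om$, $\phi^\infty = 0$ on $\partial\Om$, so the non-degeneracy hypothesis \eqref{non-deg} on $u_0$ forces $\phi^\infty \equiv 0$.

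The third step is to upgrade weak to strong convergence, i.e. to show $\|\phi_n\|_{\H}\to 0$, contradicting $\|\phi_n\|_{\H}=1$. Test the identity above against $\phi_n$ itself:
\[
\|\phi_n\|_{\H}^2 = \int_\Om f'\!\Big(u_0 - \textstyle\sum_h \U_h\Big)\phi_n^2 + \langle h_n,\phi_n\rangle + \sum_{ij} c_{ij}^n\langle P\psi_i^j,\phi_n\rangle .
\]
The last two terms are $o(1)$ by Step 1 and $\|h_n\|\to 0$. For the first integral, split $\Om$ into the regions $B(\xi_{i,n}, R\de_{i,n})$, a neighborhood of $\supp u_0$ away from the bubbles, and the complement. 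On each ball, change variables and use $\tilde\phi_n^i \to 0$ strongly in $L^{2^*}_{loc}$ (Rellich, from the weak $\mathcal D^{1,2}$ convergence to $0$) together with $|\U_{1,0}^{p-1}|_{N/2}$ finite; on the region near $u_0$, use $\phi_n \to 0$ strongly in $L^{2^*}_{loc}(\Om)$; on the complement, Hölder plus \eqref{est U2star} and smallness of the tail. Summing, the integral is $o(1)$, so $\|\phi_n\|_{\H}^2 = o(1)$ — the contradiction. Finally, once \eqref{L in} is established, $\L_{\bm\de,\bm\xi}$ is injective on $K_{\bm\de,\bm\xi}^\perp$ with closed range; since it is a compact perturbation of $\Pi^\perp$ restricted to $K^\perp$ (the map $\phi \mapsto \Pi^\perp\i[f'(\cdots)\phi]$ is compact by the compact Sobolev embedding $H^1_0 \hookrightarrow L^{2N/(N+2)'}$), Fredholm alternative gives surjectivity, and \eqref{L in} gives the norm bound on $\L_{\bm\de,\bm\xi}^{-1}$.

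\medskip
\noindent\emph{Main obstacle.} The delicate point is Step 3, in particular handling the interaction region where two bubbles meet: because the scales $\de_{i,n}$ are \emph{comparable} (all $\sim \eps_n^\alpha d_0$) while the centers are only $\eps_n^{\hat\beta}$-apart with $\hat\beta<\alpha$, one must be careful that the cross terms $\int \U_i^{p-1}\U_h$-type contributions and the coupling in the almost-diagonal system of Step 1 are genuinely negligible; this is exactly where the separation condition $|\tau_i-\tau_h|>\rho$ in \eqref{par} is used, and it is what makes the cluster (as opposed to a single bubble) case more than a cosmetic modification of \cite{va}.
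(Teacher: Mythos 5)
Your proposal follows essentially the same contradiction-and-rescaling scheme as the paper's proof: write the identity with a remainder in $K_{\bm\de,\bm\xi}$, show its coefficients vanish, blow up at each concentration point (using that $|\xi_i-\xi_h|/\de_i\sim\eps^{\hat\beta-\alpha}\to\infty$ so the other bubbles vanish at that scale) to invoke the non-degeneracy of $\U_{1,0}$, use the non-degeneracy of $u_0$ for the unrescaled weak limit, upgrade weak to strong convergence to reach the contradiction, and conclude invertibility by the Fredholm alternative. One small quibble: the compactness of $\phi\mapsto\Pi^\perp\,\i\left[f'\(u_0-\sum_h \U_h\)\phi\right]$ does not follow from a ``compact embedding'' of $\H$ into $L^{2N/(N-2)}(\Om)$ (the critical embedding is not compact); it follows from $f'\(u_0-\sum_h\U_h\)\in L^{N/2}(\Om)$ together with an approximation/Rellich argument, a standard fact which the paper likewise asserts without proof.
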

	\begin{proof} We will argue as in the proof of Lemma 1.7 in \cite{mu-pi}. The main difference is that our points are converging to the same point $\xi_0\in \partial\Om$ while in \cite{mu-pi} they are converging to different points in $\Omega$.
		In view of a contradiction, assume that there exist
		\begin{itemize}		\item[$\cdot$] sequences $\eps_n \to 0$ as $n \to \infty$, $|d_i|,|t_i|<a$ 
  for all $i=1,\cdots, k$ as $n \to \infty$, 
			\item[$\cdot$]  $\phi_n\in K^\perp_n := K_{\bm\de_n,\bm\xi_n}^\perp$ ,  with $\|\phi_n\|_\H = 1$; \item[$\cdot$] $\L_n\phi_n :=\L_{\pmb{\de_n},\pmb{\xi_n}}(\phi_n)=h_n$ with $\|h_n\|_\H\to 0 .$ 
		\end{itemize}
		We have that 
		\begin{align}\label{phin eq}
			\phi_{n} = i^*\left[f'\(u_0-\sum \U_{h_n}\)\phi_{n}\right]+h_n+w_n
		\end{align} where $w_{n} \in K_n = K_{\bm\de_n,\bm\xi_n}$, i.e. 
  $$ w_{n} = \sum_{i=1}^k \sum_{j=0}^N c_{i_n}^j P\psi_{i_n}^j. $$
		Observe that  \beq \label{diffN2} |f'\(\sum \Uh\)-\sum f'\(\Uh\)|_{\frac N2,\Om} = \O\(\eps^{2(\alpha-2\beta)}\).\eeq Indeed 
		\begin{align*}
			|f'\(\sum \Uh\)-\sum f'\(\Uh\)|_{\frac N2,\Om}  \leq & C  |f'\(\sum \Uh\)-\sum f'\(\Uh\)|_{\frac N2 \Omega \backslash \cup B_{\eta_i}{\xi_i}} 
			\\&+ C \sum_i \(|f'\(\sum_{h=1}^k \Uh\)-f'(\Ui)|_{\frac N2,B_{\eta_i}{\xi_i}} + \sum_{h\neq i} |f'(\Uh)|_{\frac N2,B_{\eta_i}{\xi_i}}\)  \\
			\leq &  C \sum_i \(|f'\(\sum_{h\neq i} \Uh\)|_{\frac N2,B_{\eta_i}{\xi_i}} + \sum_{h\neq i} |f'(\Uh)|_{\frac N2,B_{\eta_i}{\xi_i}}\) 
			\\&+ C \sum |f'\(\sum \Uh\)-\sum f'\(\Uh\))|_{\frac N2, \Omega \backslash \cup B_{\eta_i}{\xi_i}} \\
			&\leq C  \sum_{{i,h}}  \( \(\sum_{h\neq i} \(\frac{\de_h}{\eta_h^2}\)^\frac{N-2}{2}\)^\frac{4}{N-2} + \sum_{h\neq i} \(\frac{\de_h}{\eta_h^2}\)^2 \) |B_{\eta_i}{\xi_i}|^\frac{2}{N}  
			\\&+ C \(\sum \(\frac{\de_h}{\eta_h^2}\)^\frac{N-2}{2}\)^\frac{4}{N-2} + \sum \(\frac{\de_h}{\eta_h^2}\)^2
		\end{align*} where $\eta_i$ is defined in \ref{etai}.
		Let us also observe that $w_n\in K_n $ is orthogonal to  $\phi_n,h_n\in K_n^\perp$.
		\begin{itemize}
			\item[$\cdot$] {Step 1}. \textit{It holds that \[ \|w_n\|_{H_0^1(\Om)} \to 0 \mbox{ as } n \to \infty.\]} 
			Observe that \[ 0 = \l\phi_n,w_n\r =\sum_{i=1}^k\sum_{j=0}^N c_{i_n}^j \int_\Om \phi_n f'(\U_{i_n})\psi_{i_n}^j .\]
			Using \eqref{phin eq} and  using Lemma \ref{yan},  we have 
			\begin{align*}	\|w_{n}\|_\H^2&=\underbrace{\l\phi_n,w_n\r}_{=0}- \l i^*\left[f'\left(u_0-\sum_{h=1}^k \U_{h_n}\right)\phi_n\right],w_n\r-\underbrace{\l h_n,w_n\r}_{=0} \\&= -\int_\Om \left[f'(u_0-\sum_{h=1}^k \U_{h_n})-f'\(\sum_{h=1}^k \U_{h_n}\)\right]\phi_n w_n  \\&- \int_\Om \left[f'\(\sum_{h=1}^k \U_{h_n}\)-\sum_{h=1}^k f'(\U_{h_n})\right]\phi_n w_n - \sum_{h=1}^k\int_\Om f'(\U_{h_n}) \phi_n w_n \\&=  -\sum_{i=1}^k\sum_{j=0}^N c_{i_n}^j\int_\Om \left[\underbrace{f'\(u_0-\sum_{h=1}^k \U_{h_n}\)-f'\(\sum_{h=1}^k \U_{h_n}\)}_{\leq f'(u_0)}\right]\phi_n P\psi_{i_n}^j \\&- \int_\Om \left[f'\(\sum_{h=1}^k \U_{h_n}\)-\sum_{h=1}^k f'(\U_{h_n})\right]\phi_n w_n \\&- \sum_{h,i=1}^k\sum_{j=0}^N c_{i_n}^j\int_\Om f'(\U_{h_n}) \phi_n (P\psi_{i_n}^j-\psi_{i_n}^j) +\underbrace{\sum_{h,i=1}^k\sum_{j=0}^N c_{i_n}^j\int_\Om f'(\U_{h_n})\phi_n\psi_{i_n}^j}_{=0}\\&\leq {f'(u_0)}\sum_{h,i=1}^k\sum_{j=1}^N |c_{i_n}^j||\phi|_{\frac{2N}{N-2},\Om} \underbrace{|P\psi_{i_n}^j|_{\frac{2N}{N+2},\Om}}_{\normalcolor{}{\O(\de_{i_n})^2}} \\&+|f'\(\sum_{h,i=1}^k \U_{h_n}\)-\sum_{h,i=1}^k f'(\U_{h_n})|_{\frac{N}{2},\Om} |\phi_n|_{\frac{2N}{N-2},\Om}|w_n|_{\frac{2N}{N-2},\Om}\\&+\sum_{h,i=1}^k\sum_{j=0}^N |c_{i_n}^j| |f'(U_{h_n})|_{\frac N2,\Om} |\phi_n|_{\frac{2N}{N-2},\Om}|P\psi_{i_n}^j-\psi_{i_n}^j|_{\frac{2N}{N-2},\Om}
			\end{align*}
			where $|P\psi_{i_n}^j|_{\frac{2N}{N+2},\Om}$,$|f'(\sum \U_{h_n})-\sum f'(\U_{h_n})|_{\frac{N}{2},\Om}$ and $ |P\psi_{i_n}^j-\psi_{i_n}^j|_{\frac{2N}{N-2},\Om}$ go to zero as $\eps_n \to 0$ by \eqref{psi conj}, \eqref{diffN2} and Lemma \ref{psi diff}   and $|f'( \U_{h_n})|_{\frac{N}{2},\Om}$ is bounded as $\U_{1,0}\in L^\frac{N}{2}(\R)$ and
			\[|f'(\Uh)|_{\frac N2,\Omega} \leq \alpha^{p-1}  |U_{1,0}|_{\frac N2, \R}. \]
			Then we have
			\beq \|w_n\|^2_\H \leq o_n(1) \sum_{j=0}^N \sum_{i=1}^k|c_{i_n}^j| \label{wn1}+ o_n(1)\|w_n\|_\H.\eeq
			\normalcolor{}{Moreover, 
				\beq \label{wn0}\|w_n\|^2_\H = \sum c_{i_n}^j c_{r_n}^s \l P\psi_{i_n}^j,P\psi_{r_n}^s\r = \sum c_{i_n}^j c_{r_n}^s[\de_{j,s}\de_{i,r}+o_n(1)] .\eeq
				Putting together \eqref{wn1} and \eqref{wn0},} we have that $c_{i_n}^j$ are bounded and consequently  $\|w_n\|_\H\to 0 $ as $n \to \infty$ . Now as $\|\phi_n\|$ is bounded in $H_0^1(\Om)$, it converges weakly in $\mathcal{D}^{1,2}(\Om) $ 
			to $\phi_\infty$ . We want to show that $\phi_\infty=0$.
			\item[$\cdot$] Step 2.       
			For all $i=1,\cdots,k$ let us define \[\tilde\phi_{i_n}(y)=\de_{i_n}^{\frac{N-2}{2}}\phi_n(\de_{i_n}y+\xi_{i_n}) \mbox{ for all } y\in\Om_{i_n}= \frac{\Om-\xi_{i_n}}{\de_{i_n}} ,\] such that $\|\tilde{\phi}_{i_n}\|_{H^1_0(\Om_{i_n})} = \|\phi_n\|_{\H} = 1$. Then $\tilde{\phi}_{i_n}$ is bounded and there exists  $\tilde{\phi}_{i_\infty}$ such that  $\tilde{\phi}_{i_n} \to \tilde{\phi}_{i_\infty}$ weakly in $\mathcal D^{1,2}(\R)$. We observe that 
			\[\tilde{\phi}_{i_n} = \de_{i_n}^2 \i_{i_n} \left[  f'\( u_0(\de_{i_n}y+\xi_{i_n})-\de_{i_n}^{-\frac{N-2}{2}}\U_{1,0}-\sum_{h\neq i} \U_h(\de_{i_n}y+\xi_{i_n}) \)\tilde{\phi}_{i_n}  \right] + \tilde h_{i_n}(y)+\tilde w_{i_n}(y)  ,\] 
			where $\i_{i_n}$ is the adjoint operator  of the immersion $ \i_{{i_n}}: H^1_0({\Om_{i_n}}) \hookrightarrow L^{\frac{2N}{N-2}}({\Om_{i_n}})$, $\tilde h_{i_n}(y) = h_n(\de_{i_n}y+\xi_{i_n})$ and  $\tilde w_{i_n}(y) = (\de_{i_n}y+\xi_{i_n}).$ Recalling that $|\xi_i-\xi_h|^2 = \O (\eps^{2\beta})$ by \eqref{distxi}, for all $\varphi \in C_c^\infty$ such that $\supp \varphi \subset \Om_{i_n}$, we have \normalcolor{}{
				\begin{align*}
					\l \tilde{\phi}_{i_n}&,\varphi \r_{H_0^1(\Om_{i_n})} =  \de_{i_n}^2 \int_{\Om_{i_n}} f'\(u_0-\sum\Uh\)\tilde{\phi}_{i_n}\varphi + \l \underbrace{h_n^i+w_n^i, \varphi\r}_{=o_n(1)}\\ \leq& \de_{i_n}^2 \int\limits_{\Om_{i_n}}f'\(u_0(\de_{i_n}y+\xi_{i_n})-\de_{i_n}^{-\frac{N-2}{2}}\U_{1,0}-\al_N\sum_{h \neq i} \frac{\de_h^{\frac{N-2}{2}}}{ (\de_h^2+|\de_i y +\xi_{i_n}-\xi_{h_n}|^2)^{\frac{N-2}{2}} }\) \tilde{\phi}_{i_n} \varphi  \\&+o_n(1)
					\\=&\int\limits_{\Om_{i_n}}\left[f'\(\U_{1,0}(y)-\de_{i_n}^{\frac{N-2}{2}}u_0(\de_{i_n}y+\xi_{i_n})+\al_N\sum_{h\neq i} {\frac{\de_{h_n}^{\frac{N-2}{2}}\de_{i_n}^{\frac{N-2}{2}}}{(\de_h^2+|\de_i y+\xi_{i_n}-\xi_{h_n}|^2)^\frac{N-2}{2}})} \) \right. \\ & \big.-f'(\U_{1,0}) \Bigg]\tilde{\phi}_{i_n}\varphi   +p\int\limits_{\Om_{i_n}} \U_{1,0}^{p-1}\tilde{\phi}_{i_n} \varphi+o_n(1)
					\\ \leq&  \int\limits_{\Om_{i_n}} f'\(\de_{i_n}^\frac{N-2}{2}u_0(\de_{i_n}y+\xi_{i_n})-\al_N\sum_{h\neq i} \underbrace{\frac{\de_{h_n}^{\frac{N-2}{2}}\de_{i_n}^{\frac{N-2}{2}}}{(\de_h^2+|\de_i y+\xi_{i_n}-\xi_{h_n}|^2)^\frac{N-2}{2}})}_{\O(\eps_n^{(\al-\hat\beta)(N-2)})=o_n(1)}\)\tilde{\phi}_{i_n} \varphi \\&+
					\int_{\Om_{i_n}} f'(\U_{1,0}) \tilde{\phi}_{i_n} \varphi + o_n(1)\\ \to & p\int_{\R} \U_{1,0}^{p-1} \varphi\tilde\phi_{i_\infty} 
			\end{align*}}  
			\normalcolor{}{where the first term goes to zero by the Dominated Convergence Theorem.} Hence at the limit \[-\Delta \tilde{\phi}_{i_\infty} = p\U_{1,0}^{p-1}\tilde{\phi}_{i_\infty} \qquad \mbox{weakly in } \mathcal D^{1,2}(\R).\]
			We want to prove that the limit function $\tilde{\phi}_{i_\infty}$ is null. It is sufficient to show that \[\tilde{\phi}_{i_\infty}\in \ker(-\Delta-pU^{p-1})^\perp , \] i.e.
			\[\int_{\R}\nabla\tilde{\phi}_{i_\infty}\cdot \nabla\psi^j= \int_{\R} \tilde{\phi}_{i_\infty}f'(\U_{1,0})\psi^j = 0 \mbox{ for all }j=0,\cdots,N. \]
			Indeed, if $j=0$ we have that
			\begin{align*}
				0=\l\phi_n,P\psi_{i_n}^j\r&= \int_\Om \nabla \phi_{n}(x)\cdot \nabla P\psi_{i_n}^j dx = \int_\Om \phi_n(x) f'(U_i(x))\psi_i^j(x) dx \\&= \int_{\Om_{in}} \tilde{\phi}_{i_n}(y)  f'(\U_{1,0}(y))\psi^0(y) dy  \to \int_{\R} \tilde{\phi}_{i_\infty}f'(\U_{1,0})\psi^0
			\end{align*} and similarly for $j=1,\cdots,N$.  
			\normalcolor{}{ As $\tilde\phi_{i_n}\rightharpoonup\tilde\phi_{i_\infty}$ in $\mathcal D^{1,2}(\R)$ and $f'(U_{1,0})\psi^0_{1,0}\in L^{\frac{2N}{N+2}}(\RR^N)$ then $\tilde\phi_{i_\infty}\rightharpoonup 0$ in $\mathcal D^{1,2}(\R)$.} 
			\item[$\cdot$] Step 3. 
			Now for all $\varphi \in C_c^\infty(\Om)$ we have that \normalcolor{}{
				\begin{align*}
					\l {\phi_{n}},\varphi \r_{H_0^1(\Om)} &=  \int_\Om f'\left(u_0-\sum \U_{h_n}\right){\phi_{n}} \varphi + \l \underbrace{h_n+w_n, \varphi\r}_{=o_n(1)} \\&= 		\int_{\Om}f'\left(u_0\right)\phi_n\varphi + \int_{\Om } \underbrace{\left[ f'(u_0-\sum \U_{h_n})-f'(u_0)\right]}_{\leq f'(\sum \U_{h_n})} \phi_n\varphi +o_n(1) 
					\\&\leq \int_\Om f'(u_0)\phi_n\varphi + \int_\Om \left[f'\(\sum \U_{h_n}\)-f'\(\U_{1_n}\)\right]|\phi_n||\varphi|+ \int_\Om f'\(\U_{1_n}\)|\phi_n||\varphi|+ o_n(1) \\&\leq \int_\Om f'(u_0)\phi_n\varphi + \sum \int_\Om f'\(\U_{i_n}\)|\phi_n||\varphi|+ o_n(1)
					\\&= \int_\Om f'(u_0)\phi_n\varphi + \sum \de_{i_n}^\frac{N+2}{2}\int_{\Om_{i_n}}f'\(\sum \U_{h_n}(\de_{i_n}y+\xi_{i_n})\)|\tilde \phi_{i_n}||\varphi|+ o_n(1)
					\\&= \int_\Om f'(u_0)\phi_n\varphi + \sum \de_{i_n}^\frac{N-2}{2}\int_{\Om_{i_n}}f'\(U_{1,0}\)|\tilde \phi_{i_n}||\varphi|+ o_n(1)
					\\&  \to p\int_\Om u_0^{p-1} \varphi\phi_{\infty} 
			\end{align*} }
			as $\tilde\phi_{i_n}$ converges to zero weakly in $\D^{1,2}(\R)$ and $ \U_{1,0}\in L^\frac{2N}{N+2}(\R)$.
			Hence $\phi_\infty$ weakly satisfies $-\Delta\phi_\infty = p u_0^{p-1}\phi_\infty$ in $\Om$ and, by the non degeneracy of $u_0$, we have that $\phi_\infty=0$.
			
			\item[$\cdot$] Step 4. Now we want to show that $\phi_n\to 0$ strongly in $H_0^1(\Om)$. 		
			Indeed
			\begin{align*}
				\|\phi_n\|_{H_0^1(\Om)} &= \normalcolor{}\l \i(f'\(u_0-\sum \U_{h_n}\)\phi_n,\phi_n \r + \underbrace{\l h_n+w_n,\phi_n \r}_{=0}\\&= \int_\Om \(f'\(u_0-\sum \U_{h_n}\) -f'\(\sum \U_{h_n}\)\) \phi_n^2+\int_\Om f'\(\sum \U_{h_n}\) \phi_n^2 \\&\leq  \int_\Om f'\(u_0\) \phi_n^2+\int_{\Om_{i_n}} f'\(U_{1,0}+\de_{i_n}^\frac{N-2}{2}\sum_{h_n\neq i_n} \U_{h_n}(\de_{i_n}y+\xi_{i_n})\) \tilde\phi_{i_n}^2 \\&\leq \int_\Om f'(u_0)\phi_n^2+\int_{\Om_{i_n}} f'(U_{1,0}) \tilde\phi_{i_n}^2 + \sum_{h\neq i } \de_{i_n}^2\int_{\Om_{i_n}} \al_N^\frac{4}{N-2}\( \frac{\de_{h_n}}{\de_{h_n}^2+|\de_{i_n}y+\xi_{i_n}-\xi_{h_n}|^2}\)^2\tilde\phi_{i_n}^2 
			\end{align*}
			where \normalcolor{}{the last term converges to zero by the Dominated Convergence Theorem.} Moreover \normalcolor{}{ $\phi_n^2$ and $\tilde\phi^2_{h_n}$ are uniformly bounded respectively in $L^{\frac{N}{N-2}}(\Om)$ and $L^{\frac{N}{N-2}}(\RR^N)$ and they converge to zero almost everywhere in $\Om$ and $\R$. Hence $\phi_n^2$ and $\tilde\phi_{h_n}^2$ converge weakly to zero in $L^\frac{N}{N-2}(\Om)$ and $L^\frac{N}{N-2}(\R)$ }. As $f'(u_0)\in L^\frac{N}{2}(\Om)$ and $  f'\(U_{1,0}\) \in L^\frac{N}{2}(\R)$, we have that $\|\phi_n\|_\H~=~o_n(1)$, and this contradicts the hypothesis  $\|\phi_n\|=1$ and prove that $\|\L_{\bm\de,\bm\xi}\phi\|~\geq~C~\|\phi\|$.
			\item[$\cdot$] Step 5. Now we have to prove that $\L_{\bm\de,\bm\xi}$ is invertible and its inverse is continuous. Indeed, we know that $\Pi^\perp\circ \i:L^{\frac{2N}{N-2}}(\Om)\to H_0^1(\Om)$ is a {compact} operator, then $\L_{\bm\de,\bm\xi}=Id-K$ where $K$ is a compact operator. By \eqref{L in} we also know that $\L_{\bm\de,\bm\xi}$ is injective. Thus,
			by the Fredholm’s alternative theorem is also surjective.
		\end{itemize}
	\end{proof}
	
	Now we want to estimate the error term defined in \eqref{err}.
	\begin{prop} \label{estimate err}
		Let $N>6$ and $\bm\de,\bm\xi$ satisfy \eqref{deltai} and \eqref{par} {as in Section \ref{ans}}. Then 
		there exists $C= C(a)>0$ and $\eps_0>0$ such that for any $\eps \in (0,\eps_0)$ 
		the error term satisfies
		\begin{equation*}
			\|\E_{\bm\de,\bm\xi}\|_{H^1_0(\Om)} \leq C \eps^{\frac{\hat\theta}{2}+\sigma} 
		\end{equation*} for some $\sigma>0$ arbitrarily small, where $\hat\theta$ is given by \eqref{htheta}.
	\end{prop}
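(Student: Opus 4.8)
The plan is to estimate $\|\E_{\bm\de,\bm\xi}\|_{\H}$ by exploiting the continuity of $\Pi^\perp$ and of $\i$ (the bound \eqref{ibound}), so that it suffices to control the $L^{\frac{2N}{N+2}}$-norm of
$$
-\Delta \Wd - f(\Wd) - \eps \Wd = -\Delta u_0 + \sum_{i=1}^k \Ui^p - f\Big(u_0 - \sum_{i=1}^k P\Ui\Big) - \eps\Big(u_0 - \sum_{i=1}^k P\Ui\Big),
$$
using $-\Delta u_0 = u_0^p$ and $-\Delta P\Ui = \Ui^p$. First I would split the nonlinear discrepancy
$$
f\Big(u_0 - \sum_i P\Ui\Big) - u_0^p + \sum_i \Ui^p
$$
into three groups: (a) the "interaction" term coming from replacing $\sum_i P\Ui$ by $\sum_i \Ui$ and then by the individual bubbles, i.e. $f(u_0 - \sum_i P\Ui) - f(\sum_i P\Ui) - u_0^p$ type cancellations controlled by Lemma \ref{yan} with $r = p$; (b) the bubble-bubble interaction $f(\sum_i P\Ui) - \sum_i \Ui^p$, which by Lemma \ref{yan} is bounded pointwise by cross terms $\sum_{i\ne h} \Ui^{p-1}\Uh$ plus $\sum_i |\varphi_{\de_i,\xi_i}|^p$-type remainders; and (c) the projection error $\sum_i (\Ui^p - (P\Ui)^p)$ which is governed by $\varphi_{\de_i,\xi_i} = \Ui - P\Ui$ and the estimate in Lemma \ref{yan0}. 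The term $f(u_0)$ minus its counterpart is handled by the $C^1$-regularity of $f$ away from the bubbles and the smallness of $\varphi_{\de_i,\xi_i}$ near them.

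Next I would estimate each piece in $|\cdot|_{\frac{2N}{N+2},\Om}$. The linear term $\eps \Wd$ contributes $\eps|u_0|_{\frac{2N}{N+2}} + \eps\sum_i |P\Ui|_{\frac{2N}{N+2}} \lesssim \eps + \eps\sum_i \de_i^2 \lesssim \eps$ by \eqref{est Uconj} and boundedness of $u_0$; since $\eps = \eps^{1}$ and $\frac{\hat\theta}{2} = \frac{N^3-8N+8}{2N(N^2-6N+4)} < 1$ for $N \ge 7$, this is of lower order — I would double-check this inequality, as it is exactly where dimension $N\ge 7$ enters. The projection-error term (c) is $O(\de_i^{\frac{N-2}{2}} / \dist(\xi_i,\partial\Om)^{\frac{N-2}{2}})$-type in the relevant norm by \eqref{varphi infty}, \eqref{varphi 2star} and Lemma \ref{psi diff}, which with $\de_i \sim \eps^\alpha$ and $\dist(\xi_i,\partial\Om) = \eta_i \sim \eps^\beta t_0$ gives exactly order $\eps^{(\alpha-\beta)\frac{N-2}{2}} = \eps^{\theta/2}$; but $\theta/2 > \hat\theta/2$, so this is subordinate. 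The dominant contribution is the bubble-bubble interaction term (b): using $|\xi_i - \xi_h|^2 \sim \eps^{2\hat\beta}|\tau_i - \tau_h|^2$ from Remark \ref{distxi} and a standard interaction integral estimate, $\int \Ui^{p-1}\Uh$ scales like $(\de_i\de_h/|\xi_i-\xi_h|^2)^{\frac{N-2}{2}}$ up to logarithms, i.e. like $\eps^{(\alpha - \hat\beta)(N-2)} = \eps^{\hat\theta}$; converting to the $\frac{2N}{N+2}$-norm and taking a root produces $\eps^{\hat\theta/2}$, with the arbitrarily small $\sigma$ absorbing borderline logarithmic or volume factors.

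The main obstacle I anticipate is the careful bookkeeping of scales in the interaction estimate (b): one must verify that among all cross terms, the self-interaction scale $\eps^{\hat\theta}$ genuinely dominates — in particular that the presence of $u_0$ (a smooth, nonvanishing background) does not create a larger error through the region where it is of comparable size to the bubbles, and that the boundary geometry (the graph $\vartheta$ and the normal $\nu(\hat\xi_i)$) only perturbs distances at order $o(\eps^{\hat\beta})$ as recorded in Remark \ref{distxi}. I would organize this by splitting $\Om$ into the $k$ balls $B_{\eta_i}(\xi_i)$ (rescaled to $\Om_{i}$) and the complement, estimating on each rescaled ball where one bubble dominates and the others are $O(\eps^{(\alpha-\hat\beta)(N-2)})$ small, exactly as in the analogous computation \eqref{diffN2} already carried out in the proof of Proposition \ref{prop L inv}. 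The computation is essentially that of Lemma 2.2–2.3 in \cite{va} adapted to $k$ bubbles clustering at a single boundary point, so I would lean on that reference for the one-bubble case and only detail the new cross-interaction terms.
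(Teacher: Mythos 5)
Your overall route is the same as the paper's: bound $\|\E_{\bm\de,\bm\xi}\|_{\H}$ by $|{\cdot}|_{\frac{2N}{N+2},\Om}$ via the continuity of $\Pi^\perp\circ\i$, use $-\Delta u_0=u_0^p$ and $-\Delta P\Ui=\Ui^p$, split $\Om$ into the balls $B_{\eta_i}(\xi_i)$ and their complement, and control the pieces with Lemma \ref{yan}, \eqref{varphi infty}--\eqref{varphi 2star} and the one-bubble computations of \cite{va}. The genuine gap is in your treatment of the linear term $\eps\Wd$: you dismiss $\eps|u_0|_{\frac{2N}{N+2},\Om}\sim\eps$ as lower order on the grounds that $\frac{\hat\theta}{2}<1$ for $N\ge 7$, but this inequality is false in part of the range. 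Indeed $\hat\theta<2$ is equivalent to $N^3-12N^2+16N-8>0$, which holds only for $N\ge 11$; for $N=7,8,9,10$ one has $\frac{\hat\theta}{2}>1$ (e.g. $\hat\theta=\frac{295}{77}\approx 3.83$ at $N=7$), so $\eps$ is \emph{not} $O(\eps^{\frac{\hat\theta}{2}+\sigma})$ and your argument for absorbing the term $\eps u_0$ collapses exactly in the lowest admissible dimensions. You flagged this as the point to double-check, and in fact you have put your finger on the delicate spot: in the paper's own proof this contribution appears as the term $(VI)=\eps|u_0|_{\frac{2N}{N+2},\Om}$ in the decomposition but is never estimated, so simply ``leaning on the reference'' does not close this step either; a genuine fix requires an extra idea (for instance building the ansatz on the continuation $u_\eps$ of $u_0$ solving $-\Delta u=|u|^{p-1}u+\eps u$, which exists by non-degeneracy and removes the $O(\eps)$ discrepancy), which is absent from your proposal. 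The companion term $\eps\sum_i|P\Ui|_{\frac{2N}{N+2},\Om}\lesssim\eps\de_i^2\sim\eps^{\theta}$ is harmless, as you say.

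A secondary inaccuracy is your identification of the dominant contribution. The bubble--bubble cross term is not estimated by ``taking a root'' of the interaction integral: measured directly in $|\cdot|_{\frac{2N}{N+2},\Om}$ it is of size roughly $\big(\de_i\de_h/|\xi_i-\xi_h|^2\big)^{\frac{N-2}{2}}\sim\eps^{\hat\theta}$ for $N\ge 7$ (in the paper it is absorbed into the bounds for $(I)$ and $(III)$ using $\U_h\lesssim \de_h^{\frac{N-2}{2}}/\eta_h^{N-2}$ on $B_{\eta_i}(\xi_i)$), hence it sits well below the threshold. The terms that come closest to saturating $\eps^{\frac{\hat\theta}{2}+\sigma}$ in the paper's accounting are those coming from the interaction of the bubbles with $u_0$ and with the boundary, e.g. $|f(u_0)|_{\frac{2N}{N+2},\cup_i B_{\eta_i}(\xi_i)}\sim\eta_i^{\frac{N+2}{2}}=\eps^{\beta\frac{N+2}{2}}$ and the $\de_i^2\eta_i^{\frac{N-6}{2}}$-type terms in $(I)$, not the mutual bubble interaction. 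This misattribution does not by itself break the estimate, but combined with the point above it means the proposal as written does not establish the stated bound in the dimensions $7\le N\le 10$.
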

	\begin{proof}
		Observing that
		$$	\E_{\bm\de,\bm\xi} = \Pi^\perp\{\i\left[f(\Wd)+\eps\Wd\right] -\i [f(u_0)-\sum_{h=1}^k f(\U_h)] \} $$ and by \eqref{ibound} we get
		\begin{align*}
			\|\E_{\bm\de,\bm\xi}\|_{H^1_0(\Om)} &\leq  |f(\Wd)-f(u_0)+\sum f(P\Uh)+\eps(\Wd)|_{\frac{2N}{N+2},\Om} \\&\leq 
			{\underbrace{\big|f(\Wd)-f\(\sum P\Uh\)\big|_{\frac{2N}{N+2};\cup B_{\eta_i}(\xi_i)}}_{(I)} +\sum_h \underbrace{|f(\U_h)|_{\frac{2N}{N+2}; \Om \backslash \cup B_{\eta_i}(\xi_i)}}_{(II)} }\\&+\underbrace{\big|\sum f(\Uh)-f\(\sum P\Uh\)\big|_{\frac{2N}{N+2},\cup B_{\eta_i}(\xi_i)}}_{(III)}\\& + \underbrace{|f(\Wd)-f(u_0)|_{\frac{2N}{N+2};\Om \backslash\cup B_{\eta_i}(\xi_i)}}_{(IV)}+ \underbrace{|f(u_0)|_{\frac{2N}{N+2}; \cup B_{\eta_i}(\xi_i)}}_{(V)}\\&+ \underbrace{\eps |u_0|_{\frac{2N}{N+2}}}_{(VI)}+\underbrace{\eps\sum_h |P\Uh|_{\frac{2N}{N+2};\Om}}_{(VII)}
			.
		\end{align*}
		Recalling that $\eta_h=\dist(\xi_h,\partial\Om)= \O(\eps^\beta)$, we observe that 
		$$ \ba
		(V) &\leq \sum |u_0^p|_{\frac{2N}{N+2}, B_{\eta_i}(\xi_i)} \leq  \sum_i \(\int_0^{\eta_i} r^{N-1} \)^{\frac{N+2}{2N}} \\&= \sum\O\(\eta_i^{\frac{N+2}{2}}\)= \O\(\eps^{\frac{N^2-4}{N^2-6N+4}}\)= \O\(\eps^{\frac{\hat\theta}{2}+\sigma}\)
		\ea $$
		and that \normalcolor{}{\beq \U_{\de_h,\xi_h} \lesssim   \(\frac{\de_h}{\eta_h^2}\)^{\frac{N-2}{2}} = \O\(\eps^{(\alpha-2\beta)\frac{N-2}{2}}\)= \O\(\eps^{\frac{N-2}{N^2-6N+4}}\) \label{U out}\eeq  in $\Om \backslash B_{\eta_h}(\xi_h)$ for all $h=1,\cdots, k$.} Then for all $N\geq 7$
		\begin{align*}
			(I) &\leq \sum_i
			\Bigg|f(u_0-\sum_h P\Uh) -  f\(\sum_h P\Uh\) \Bigg|_{\frac{2N}{N+2};B_{\eta_i}(\xi_i)} \\&\leq \sum_i \( \Bigg|f'\(\sum_h P\Uh\)u_0 \Bigg|_{\frac{2N}{N+2};B_{\eta_i}(\xi_i)}+ | u_0^p  |_{\frac{2N}{N+2};B_{\eta_i}(\xi_i)}  \)
			\\& \leq C \sum_i \(\Bigg|f'\(\sum_h P\Uh\)- f'\( \Ui\)\Bigg|_{\frac{2N}{N+2};B_{\eta_1}(\xi_1)} + |f'\(\Ui\) |_{\frac{2N}{N+2};B_{\eta_1}(\xi_1)}+ \(\int_0^{\eta_i} r^{N-1}\)^{\frac{N+2}{2N}}\) 
			\\&\leq C \sum_i \(\big|f'\(\Ui-P\Ui\)\big|_{\frac{2N}{N+2};B_{\eta_i}(\xi_i)} + \sum_{h\neq i}|f'\(P\Uh\) |_{\frac{2N}{N+2};B_{\eta_i}(\xi_i)}\)\\&+C\sum_i\(|f'\(\Ui\) |_{\frac{2N}{N+2};B_{\eta_i}(\xi_i)}+ \eta_i^{\frac{N+2}{2}}\)
			\\ &\leq C\sum_i \(|\varphi_{\de_i,\xi_i}|_{\infty,\Om}^{\frac{4}{N-2}}\eta_i^{\frac{N+2}{2}}+\eta_i^\frac{N+2}{2}\sum_{h\neq i} \frac{\de_h^2}{\eta_h^4} + \de_i^2 \(\int_0^{\eta_i} r^{N\frac{N-6}{N+2}-1}\)^\frac{N+2}{2}+\eta_i^\frac{N+2}{2}\)
			\\&\leq C \sum_i \frac{\de_i^2}{\eta_i^4}\eta_i^\frac{N+2}{2}+ +\eta_i^\frac{N+2}{2}\sum_{h\neq i} \frac{\de_h^2}{\eta_h^4} + \de_i^2 \eta_i^{\frac{N-6}{2}}+\eta_i^\frac{N+2}{2} = \O(\eps^{\frac{\hat\theta}{2}+\sigma}) 
		\end{align*} by \eqref{U out} and \eqref{varphi infty}.
		Now
		\beq \ba\label{fUn}		(II) &\leq  |f(\Uh)|_{\frac{2N}{N+2},\Om \backslash  B_{\eta_h}(\xi_h)}=\alpha_N^p \(\int_{\Om\backslash B_{\eta_h}(0)}  \(\frac{\de_h}{\de_h^2+|y|^2}\)^N\)^{\frac{N+2}{2N}} \\&\leq C \(\int_{\frac{\eta_h}{\de_h}}^{+\infty}  \frac{r^{N-1}}{(1+r^2)^{N}}\)^{\frac{N+2}{2N}} \leq C\(\frac{\de_h}{\eta_h}\)^{\frac{N+2}{2}}\\&= \O \(\eps^{\frac{N+2}{2}(\alpha-\beta)}\) = \O\(\eps^{\frac{N(N+2)}{N^2-6N+4}}\) =\O\(\eps^{\frac{\hat\theta}{2}+\sigma}\)
		\ea\eeq
		and analogously
		\beq \ba\label{Un}
		|\Uh|_{\frac{2N}{N+2},\Om \backslash \cup B_{\eta_i}(\xi_i)} &\leq  |\Uh|_{\frac{2N}{N+2},\Om \backslash  B_{\eta_h}(\xi_h)}= \alpha_N\(\int_{\Om\backslash B_{\eta_h}(0)} \(\frac{\de_h}{\de_h^2+|y|^2}\)^{N\frac{N-2}{N+2}}\)^{\frac{N+2}{2N}} \\&\leq C \de_h^2\(\int_{\frac{\eta_h }{\de_h}}^{+\infty} \alpha_N \frac{r^{N-1}}{(1+r^2)^{N\frac{N-2}{N+2}}}\)^{\frac{N+2}{2N}} \leq C\de_h^2 \(\frac{\de_h}{\eta_h}\)^{\frac{N-6}{2}}\\&= \O \(\eps^{2\alpha+\frac{N-6}{2}(\alpha-\beta)}\) = \O\(\eps^{\frac{N^2+2N-8}{2(N^2-6N+4)}}\) =\O\(\eps^{\frac{\hat\theta}{2}+\sigma}\).
		\ea\eeq

		Then, by \eqref{fUn} and \eqref{Un}
		\begin{align*}
			(IV)&\leq \big|\(\sum P\Uh\)^p|_{\frac{2N}{N+2};\Om \backslash\cup B_{\eta_i}(\xi_i)}+ |u_0^{p-1}\sum P\Uh|_{\frac{2N}{N+2};\Om \backslash\cup B_{\eta_i}(\xi_i)}
			\\&\leq C \sum|\(P\Uh\)^p|_{\frac{2N}{N+2};\Om \backslash\cup B_{\eta_i}(\xi_i)}+ C \sum|P\Uh|_{\frac{2N}{N+2};\Om \backslash\cup B_{\eta_i}(\xi_i)}
			\\&\leq C \sum|\(\Uh\)^p|_{\frac{2N}{N+2};\Om \backslash\cup B_{\eta_i}(\xi_i)}+ C \sum|\Uh|_{\frac{2N}{N+2};\Om \backslash \cup B_{\eta_i}(\xi_i)}= \O(\eps^{\frac{\hat\theta}{2}+\sigma}).
		\end{align*}
		In a similar way, by \eqref{varphi infty} and \eqref{U out} 
		\begin{align*}
			(III) \leq& \sum_i \Bigg|\sum_h f(\Uh)-f\(\sum_h P\Uh\)\Bigg|_{\frac{2N}{N+2},B_{\eta_i}(\xi_i)} \\
			\leq& \sum_i \( |f(\Ui)-f(\sum P\Uh)|_{\frac{2N}{N+2},B_{\eta_i}(\xi_i)} + \sum_{h\neq i} |f(\Uh)|_{\frac{2N}{N+2},B_{\eta_i}(\xi_i)} \)\\ 
			\leq& C \sum_i \(|f'(\Ui)\(\varphi_i+\sum_{h\neq i} P\Uh\) |_{\frac{2N}{N+2},B_{\eta_i}(\xi_i)}+ \sum_{h\neq i} |f(\Uh)|_{\frac{2N}{N+2},B_{\eta_i}(\xi_i)}\)
			\\ \leq& C \sum_i \( |f'(\Ui)|_{\frac{2N}{N+2},B_{\eta_i}(\xi_i)} \(|\varphi_{\de_i,\xi_i}|_{\infty,\Om} + \sum_{h\neq i}\(\frac{\de_h}{\eta_h^2}\)^\frac{N-2}{2} \)+ |B_{\eta_i}(\xi_i)|^{\frac{N+2}{2N}} \sum_{h\neq i} \(\frac{\de_h}{\eta_h^2}\)^\frac{N-2}{2}\)  \\
			\leq & C \sum_i  \( \de_i^{\frac{N-2}{2}} \( \int_0^{\frac{\eta_h}{\de_h}} r^{\frac{N^2-7N-2}{N+2}}\)^{\frac{N+2}{2N}}\) \sum \(\frac{\de_h^\frac{N-2}{2}}{\eta_h^{N-2}}\) + \eta_i^\frac{N+2}{2} \sum_{h\neq i} \(\frac{\de_h}{\eta_h^2}\)^\frac{N-2}{2} \\
			\leq & C \sum_i \( \de_i^{\frac{N-2}{2}} \( \frac{\eta_i}{\de_i}\)^{\frac{N-6}{2}} \sum \(\frac{\de_h^\frac{N-2}{2}}{\eta_h^{N-2}}\) + \eta_i^\frac{N+2}{2} \sum_{h\neq i} \(\frac{\de_h}{\eta_h^2}\)^\frac{N-2}{2} \) \\
			=& \O\(\eps^{(\alpha-\beta) \frac{N+2}{2}}\) + \O\(\eps^{\alpha \frac{N-2}{2}-\beta\frac{N-6}{2}}\) = \O\(\eps^{\hat\theta+\sigma}\) . 
		\end{align*}
	\end{proof}
	
	Now we can solve $\eqref{eq piperp}$ using a fixed point argument.
	\begin{prop}\label{contraction}Let $N>6$ and $\bm\de,\bm\xi$
 satisfy \eqref{deltai} and \eqref{par} {as in Section \ref{ans}}. Then
		there exists $C= C(a)>0$ and $\eps_0>0$ such that for any $\eps \in (0,\eps_0)$ 
		there exists a unique $\phi_{\bm\de,\bm\xi} \in K_{\bm\de,\bm\xi}^\perp$
		solving 
  \begin{equation}\label{problem}\L_{\bm\de,\bm\xi}\phi_{\bm\de,\bm\xi} = \E_{\bm\de,\bm\xi}+\N^1\phi_{\bm\de,\bm\xi}+\N^2\phi_{\bm\de,\bm\xi}
\end{equation}	
   and satisfying 
		\begin{equation} \label{phi est}
			\|\phi_{\bm\de,\bm\xi}\|_\H \leq C \eps^{\frac{\hat\theta}{2}+\sigma} 
		\end{equation}
		for some $\sigma>0$.
		
	\end{prop}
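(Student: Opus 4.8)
The plan is to recast the projected equation \eqref{problem} as a fixed point problem and to apply the contraction mapping principle. By Proposition \ref{prop L inv}, $\L_{\bm\de,\bm\xi}$ is invertible on $K_{\bm\de,\bm\xi}^\perp$ with $\|\L_{\bm\de,\bm\xi}^{-1}\|\leq C(a)$ uniformly in $\bm\de,\bm\xi$ satisfying \eqref{deltai}--\eqref{par} and in $\eps$ small, so \eqref{problem} is equivalent to the fixed point problem $\phi = T(\phi)$, where
\[
T(\phi):=\L_{\bm\de,\bm\xi}^{-1}\big(\E_{\bm\de,\bm\xi}+\mathcal N^1_{\bm\de,\bm\xi}\phi+\mathcal N^2_{\bm\de,\bm\xi}\phi\big),
\]
and $T$ maps $K_{\bm\de,\bm\xi}^\perp$ into itself since $\E_{\bm\de,\bm\xi}$, $\mathcal N^1_{\bm\de,\bm\xi}\phi$ and $\mathcal N^2_{\bm\de,\bm\xi}\phi$ are all of the form $\Pi^\perp\{\,\cdot\,\}$. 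The goal is to show that, for a suitable large constant $C$, the map $T$ is a contraction of the closed ball $\mathcal B:=\{\phi\in K_{\bm\de,\bm\xi}^\perp:\|\phi\|_\H\leq r\}$ with $r:=C\,\eps^{\hat\theta/2+\sigma}$.

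First I would collect the nonlinear estimates, which is where $N\geq 7$ enters: then $p=\tfrac{N+2}{N-2}<2$, so $p-1=\tfrac{4}{N-2}\in(0,1)$, and Lemma \ref{yan} (with an elementary sign adjustment, since $\Wd$ may change sign) gives the pointwise bound $|f(\Wd+\phi)-f(\Wd)-f'(\Wd)\phi|\leq C\min\{|\phi|^{p},|\Wd|^{p-1}|\phi|^2\}\leq C|\phi|^p$ — note that $f'(\Wd)$ drops out. Using \eqref{ibound}, the boundedness of $\Pi^\perp$, Hölder's inequality and the Sobolev embedding (here $\tfrac{2N}{N+2}\,p=2^\star$, so $\big|\,|\phi|^p\,\big|_{\frac{2N}{N+2},\Om}\leq C\|\phi\|_\H^{\,p}$) one obtains $\|\mathcal N^1_{\bm\de,\bm\xi}\phi\|_\H\leq C\big(\|\phi\|_\H^{\,p}+\eps\,\|\phi\|_\H\big)$. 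Writing $f(\Wd+\phi_1)-f(\Wd+\phi_2)-f'(\Wd)(\phi_1-\phi_2)=\big(\int_0^1[f'(\Wd+\phi_2+t(\phi_1-\phi_2))-f'(\Wd)]\,dt\big)(\phi_1-\phi_2)$ and using $|f'(a)-f'(b)|\leq C|a-b|^{p-1}$ (valid precisely because $p-1\leq 1$), the same tools give
\[
\|\mathcal N^1_{\bm\de,\bm\xi}\phi_1-\mathcal N^1_{\bm\de,\bm\xi}\phi_2\|_\H\leq C\big(\max\{\|\phi_1\|_\H,\|\phi_2\|_\H\}^{\,p-1}+\eps\big)\,\|\phi_1-\phi_2\|_\H.
\]
For $\mathcal N^2_{\bm\de,\bm\xi}$, which is linear in $\phi$, the inequality $|f'(a)-f'(b)|\leq C|a-b|^{p-1}$ with $a-b=\sum_h\varphi_{\de_h,\xi_h}$, together with Hölder and Lemma \ref{psi diff} (recall $2^\star\in(\tfrac{p+1}{2},p+1]$, whence $|\varphi_{\de_h,\xi_h}|_{2^\star,\Om}\leq C\eps^{\theta/2}$, and $\tfrac{N(p-1)}{2}=2^\star$), yields $\big|f'(u_0-\sum_h P\Uh)-f'(u_0-\sum_h\Uh)\big|_{\frac N2,\Om}\leq C\eps^{(p-1)\theta/2}$ and hence $\|\mathcal N^2_{\bm\de,\bm\xi}\phi\|_\H\leq C\,\eps^{(p-1)\theta/2}\,\|\phi\|_\H$.

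With these bounds the argument closes in the usual way. Since $p>1$ and $\hat\theta,\theta>0$, for $\phi\in\mathcal B$ we have $\|\phi\|_\H^{\,p}\leq r^p=o(r)$, $\eps\|\phi\|_\H=o(r)$ and $\|\mathcal N^2_{\bm\de,\bm\xi}\phi\|_\H=o(r)$ as $\eps\to 0$; combining this with the error estimate $\|\E_{\bm\de,\bm\xi}\|_\H\leq C_0\,\eps^{\hat\theta/2+\sigma}$ of Proposition \ref{estimate err} and choosing $C$ larger than $2C_0$ times the uniform bound on $\|\L_{\bm\de,\bm\xi}^{-1}\|$, one gets $\|T\phi\|_\H\leq r$, i.e.\ $T(\mathcal B)\subseteq\mathcal B$, for $\eps$ small. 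Likewise, the Lipschitz estimates above give $\|T\phi_1-T\phi_2\|_\H\leq C\big(r^{p-1}+\eps^{(p-1)\theta/2}+\eps\big)\|\phi_1-\phi_2\|_\H\leq\tfrac12\|\phi_1-\phi_2\|_\H$ once $\eps$ is small enough. The contraction mapping principle then provides a unique $\phi_{\bm\de,\bm\xi}\in\mathcal B$ solving \eqref{problem}, and $\phi_{\bm\de,\bm\xi}\in\mathcal B$ is precisely the bound \eqref{phi est}.

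I do not expect a serious conceptual obstacle: the scheme is the standard Lyapunov--Schmidt fixed point step, and the heavy analytic content has already been absorbed into Propositions \ref{prop L inv} and \ref{estimate err}. The only delicate point is the bookkeeping of Lebesgue exponents — after applying \eqref{ibound}, every term must be estimated in $L^{\frac{2N}{N+2}}(\Om)$ and then controlled by $\|\phi\|_\H$ through Sobolev, which repeatedly relies on $2^\star=\tfrac{2N}{N+2}\,p=\tfrac{N(p-1)}{2}$ and on $p-1\in(0,1)$; this last fact (equivalently $N\geq 7$) is what lets Lemma \ref{yan} furnish the integrable power $|\phi|^p$ instead of a $|\phi|^2$ term that would require $L^\infty$ control of $\phi$. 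Uniformity of all constants over $\bm\de,\bm\xi$ satisfying \eqref{deltai}--\eqref{par} is inherited from Propositions \ref{prop L inv}, \ref{estimate err} and Lemma \ref{psi diff}.
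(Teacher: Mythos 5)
Your proposal is correct and follows essentially the same route as the paper: the same fixed point reformulation $\phi=\L_{\bm\de,\bm\xi}^{-1}(\E_{\bm\de,\bm\xi}+\mathcal N^1\phi+\mathcal N^2\phi)$ on the ball of radius $C\eps^{\hat\theta/2+\sigma}$ in $K_{\bm\de,\bm\xi}^\perp$, with the same estimates $\|\mathcal N^1\phi\|_\H\lesssim\|\phi\|_\H^p+\eps\|\phi\|_\H$ (via Lemma \ref{yan} and $p-1<1$), $\|\mathcal N^2\phi\|_\H\lesssim\big(\sum_h|\varphi_{\de_h,\xi_h}|_{2^\star,\Om}\big)^{p-1}\|\phi\|_\H=O(\eps^{2(\alpha-\beta)})\|\phi\|_\H$, and the analogous Lipschitz bounds, combined with Propositions \ref{prop L inv} and \ref{estimate err} and the contraction mapping principle. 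No gaps; your exponent bookkeeping matches the paper's.
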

	%
	\begin{proof}
		We want to show that $\mathcal T_{\bm\de,\bm\xi}:K_{\bm\de,\bm\xi}^\perp\to K_{\bm\de,\bm\xi}^\perp$ is a contraction where$$ \mathcal T_{\bm\de,\bm\xi}\phi = \L_{\bm\de,\bm\xi}^{-1}[\E_{\bm\de,\bm\xi}+\N^1\phi+\N^2\phi] \mbox{ for all } \phi\in K_{\bm\de,\bm\xi}^\perp $$	and the operators $\L_{\bm\de,\bm\xi}$, $\E_{\bm\de,\bm\xi}$, $\N^1$ and $\N^2$  are defined \eqref{L def}, \eqref{err}, \eqref{N1} and \eqref{N2}.

		Let $r>0$ and $\mathcal B_{\bm\de,\bm\xi}=\{\phi\in K_{\bm\de,\bm\xi}^\perp : \|\phi\|_\H \leq r\eps^{\frac{\hat\theta}{2}+\sigma}\}$.
		We observe that 
		\begin{align*}
			\|\N^1\phi\|_\H\leq &C \|\i [f(\Wd+\phi)-f(\Wd)-f'(\Wd)\phi+\eps\phi]\|_\H \\ \leq& C | f(\Wd+\phi)-f(\Wd)-f'(\Wd)\phi|_{\frac{2N}{N+2},\Om} +\eps|\phi|_{\frac{2N}{N+2},\Om} \\ \leq&C |f(\phi)|_{\frac{2N}{N+2},\Om}+ \eps \|\phi\|_\H  \leq C \|\phi\|_\H^p+C\eps\|\phi\|_\H  
		\end{align*} and 
		\begin{align*}
			\|\N^2\phi\|_\H\leq &C \Bigg|\Bigg|\i \left[\(f'\(u_0-\sum_{h=1}^kP\Uh\)-f'\(u_0-\sum_{h=1}^k\Uh\)\)\phi\right]\Bigg|\Bigg|_\H \\\leq &C \Bigg| \(f'\(u_0-\sum_{h=1}^kP\Uh\)-f'\(u_0-\sum_{h=1}^k\Uh\)\)\phi\Bigg|_{\frac{2N}{N+2},\Om} \\ \leq&   C \Big| f'\(\sum\varphi_{\de_h,\xi_h}\)\Big|_{\frac{N}{2},\Om} |\phi|_{\frac{2N}{N-2},\Omega} \leq  C \(\sum|\varphi_{\de_h,\xi_h}|_{\frac{2N}{N-2},\Om}\)^\frac{4}{N-2} \|\phi\|_\H.
		\end{align*}
		Hence by \eqref{varphi 2star} \[\|\mathcal T_{\bm\de,\bm\xi}\phi\|_\H \leq \|\N^1\phi\|_\H+\|\N^2\phi\|_\H \leq C\eps^{\frac{\hat\theta}{2}+\sigma} \] and so $\mathcal T_{\bm\de,\bm\xi}$ maps $\mathcal B_{\bm\de,\bm\xi}$ into itself.
		Moreover for all $\phi_1,\phi_2\in K_{\bm\de,\bm\xi}^\perp$ we have
		\begin{align*}
			\|\N^1\phi_1-\N^1\phi_2\|_\H\leq &C |f({\Wd+\phi_1})-f({\Wd+\phi_2})-f'(\Wd)(\phi_1-\phi_2)|_{\frac{2N}{N+2},\Om} \\&+ \eps|\phi_1-\phi_2|_{\frac{2N}{N+2},\Om} \\ \leq& |f(\underbrace{\Wd+\phi_1}_{=a+b})-f(\underbrace{\Wd+\phi_2}_{=a;\  b=\phi_1-\phi_2})-f'(\Wd+\phi_2)(\phi_1-\phi_2)|_{\frac{2N}{N+2},\Om} \\&|f'(\Wd+\phi_2)(\phi_1-\phi_2)-f'(\Wd)(\phi_1-\phi_2)|_{\frac{2N}{N+2},\Om}+ \eps|\phi_1-\phi_2|_{\frac{2N}{N+2},\Om}\\ \leq
			&C\( ||\phi_1-\phi_2|^p|_{\frac{2N}{N+2},\Om}+ ||\phi_2|^p|\phi_1-\phi_2|_{\frac{2N}{N+2},\Om}+ \eps|\phi_1-\phi_2|_{\frac{2N}{N+2},\Om}\)\\ \leq& C\( |(\phi_1-\phi_2)^{p-1}|_{\frac{N}{2},\Om}|\phi_1-\phi_2|_{\frac{2N}{N-2},\Om}+ |\phi_2^{p-1}|_{\frac{N}{2},\Om}|\phi_1-\phi_2|_{\frac{2N}{N-2},\Om}\) \\&+C\eps \|\phi_1-\phi_2\|_\H^{p-1} \\ \leq&  C\( \underbrace{\|\phi_1-\phi_2\|_\H}_{\lesssim\|\phi_1\|_\H+\|\phi_2\|_\H}\|\phi_1-\phi_2\|_\H+ \|\phi_2\|_\H^{p-1}\|\phi_1-\phi_2\|_\H\)\\&+ C\eps \|\phi_1-\phi_2\|_\H\\ \leq&  C \(\|\phi_1\|_\H^{p-1}+ \|\phi_2\|_\H^{p-1}+ \eps \)\|\phi_1-\phi_2\|_\H
		\end{align*} and
		\begin{align*}
			\|\N^2\phi_1-\N^2\phi_2\|_\H\leq &C |[f'({u_0-\sum P\Uh})-f'({u_0-\sum \Uh})](\phi_1-\phi_2)|_{\frac{2N}{N+2},\Om} \\ \leq & C|f'(\sum \varphi_{\de_h,\xi_h})|_{\frac{N}{2},\Om} |\phi_1-\phi_2|_{\frac{2N}{N-2},\Om} \\ \leq& C |\sum \varphi_{\de_h,\xi_h}|^{p-1}_{\frac{2N}{N-2},\Om}\|\phi_1-\phi_2\|_\H \\ \leq & C \(\sum |\varphi_{\de_h,\xi_h}|_{\frac{2N}{N-2},\Om}\)^{p-1}\|\phi_1-\phi_2\|_\H \\ \leq &C \sum |\varphi_{\de_h,\xi_h}|_{\frac{2N}{N-2},\Om}^{p-1} \|\phi_1-\phi_2\|_\H .
		\end{align*}
		Hence \begin{align*}
			\|\mathcal T_{\bm\de,\bm\xi}\phi_1-\mathcal T_{\bm\de,\bm\xi}\phi_2\|_\H \leq &C \|\N^1\phi_1-\N^1\phi_2\|_\H + \|\N^2\phi_1-\N^2\phi_2\|_\H\\ \leq &C \(\|\phi_1\|_\H+\|\phi_2\|_\H+\eps + \sum |\varphi_{\de_h,\xi_h}|_{\frac{2N}{N-2},\Om}^{p-1}\) \|\phi_1-\phi_2\|_\H 
		\end{align*} where $|\varphi_{\de_h,\xi_h}|_{\frac{2N}{N-2},\Om}^{p-1}=\O(\eps^{2(\alpha-\beta)})$. Then, for $\eps$ sufficiently small, $\mathcal T_{\bm\de,\bm\xi}$ is a contraction and the claim follows.
	\end{proof}

	\section{The reduced energy} 
	In this section we want to reduce the original problem to a finite dimensional one and solve \eqref{eq pi}. 
	Define $J_\eps:\H\to\R$ as \beq\label{funz}
	J_\e(u)=\frac 12 \int_\Omega |\nabla u|^2 \, dx -\frac{1}{p+1}\int_\Omega |u|^{p+1}\, dx-\frac\e 2 \int_\Omega u^2\, dx. \eeq  Then the critical points of $J_\eps$ are solutions to \eqref{BN}. Let us also define the reduced functional \begin{equation}\label{Jtilde}
		\tilde J_\eps (\bm t,\bm d, \bm\tau) = J_\eps(\Wd+\phi_{\bm\de,\bm\xi})
	\end{equation} where $\bm t=(t_1,\cdots,t_k)$, $\bm d = (d_1,\cdots, d_k)\in \RR^k$, $\bm\tau = (\tau_1,\cdots,\tau_k)\in (\RR^{N-1})^k$ as in Section \ref{ans},  $\Wd=u_0-\sum P\Uh$ and $\phi_{\bm\de,\bm\xi}$ is as in Proposition \ref{contraction} such that \[
		\|\phi_{\bm\de,\bm\xi}\|_\H \leq C\eps^{\frac{\hat\theta}{2}+\sigma}
	\] for some $\sigma>0$ and $\hat\theta=\frac{N^3-8N+8}{N(N^2-6N+4)}$.
	
	\begin{prop} 
		Let $N>6$ and $\bm\de,\bm\xi$ satisfy \eqref{deltai} and \eqref{par} {as in Section \ref{ans}}. There exists $\eps_0>0$ such that for any $\eps \in (0,\eps_0)$ it holds
	$$
			J_\eps(\Wd+\phi_{\bm\de,\bm\xi})=J_\eps(\Wd)+\O(\eps^{\hat\theta+\sigma}),
		$$
  where $J_\eps$ is defined in \eqref{funz}.
	\end{prop}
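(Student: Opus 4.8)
The plan is to carry out the usual Lyapunov--Schmidt energy expansion. Write $\phi:=\phi_{\bm\de,\bm\xi}\in K_{\bm\de,\bm\xi}^\perp$ and apply the second order Taylor formula to the function $s\mapsto J_\eps(\Wd+s\phi)$ on $[0,1]$, obtaining
$$
J_\eps(\Wd+\phi)=J_\eps(\Wd)+J_\eps'(\Wd)[\phi]+\int_0^1 (1-s)\,J_\eps''(\Wd+s\phi)[\phi,\phi]\,ds .
$$
It then suffices to show that each of the two correction terms is $\O(\eps^{\hat\theta+\sigma})$, using only the bound $\|\phi\|_\H\leq C\eps^{\hat\theta/2+\sigma}$ from Proposition \ref{contraction} and the error estimate from Proposition \ref{estimate err}.

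For the linear term, since $\langle\i(g),v\rangle=\int_\Om gv$ for every $v\in\H$, one has
$$
J_\eps'(\Wd)[\phi]=\langle\Wd,\phi\rangle-\int_\Om f(\Wd)\phi-\eps\int_\Om\Wd\,\phi=\big\langle\Wd-\i\big(f(\Wd)+\eps\Wd\big),\,\phi\big\rangle .
$$
Splitting $\Wd-\i(f(\Wd)+\eps\Wd)=\Pi\{\cdots\}+\Pi^\perp\{\cdots\}$, the first summand belongs to $K_{\bm\de,\bm\xi}$ and is hence orthogonal to $\phi\in K_{\bm\de,\bm\xi}^\perp$, while the second summand equals $\E_{\bm\de,\bm\xi}$ by \eqref{err}. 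Therefore $J_\eps'(\Wd)[\phi]=\langle\E_{\bm\de,\bm\xi},\phi\rangle$, and by Cauchy--Schwarz together with Propositions \ref{estimate err} and \ref{contraction},
$$
|J_\eps'(\Wd)[\phi]|\leq\|\E_{\bm\de,\bm\xi}\|_\H\,\|\phi\|_\H\leq C\,\eps^{\hat\theta+2\sigma}.
$$

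For the quadratic term, $J_\eps''(u)[\phi,\phi]=\|\phi\|_\H^2-p\int_\Om|u|^{p-1}\phi^2-\eps\int_\Om\phi^2$. We bound $\int_\Om|\Wd+s\phi|^{p-1}\phi^2$ by Hölder's inequality with conjugate exponents $\tfrac{p+1}{p-1}$ and $\tfrac{p+1}{2}$, so that $\int_\Om|\Wd+s\phi|^{p-1}\phi^2\leq|\Wd+s\phi|_{p+1,\Om}^{p-1}|\phi|_{p+1,\Om}^2$; here $|\phi|_{p+1,\Om}\leq C\|\phi\|_\H$ since $p+1=2^\star$, and $|\Wd+s\phi|_{p+1,\Om}\leq|u_0|_{p+1,\Om}+\sum_h|\Uh|_{2^\star,\Om}+|\phi|_{p+1,\Om}\leq C$ by $0\leq P\Uh\leq\Uh$ and \eqref{est U2star}. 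Since moreover $\eps\int_\Om\phi^2\leq C\eps\|\phi\|_\H^2$ because $\Om$ is bounded, we obtain $|J_\eps''(\Wd+s\phi)[\phi,\phi]|\leq C\|\phi\|_\H^2$ uniformly in $s\in[0,1]$, whence the integral remainder is $\O(\|\phi\|_\H^2)=\O(\eps^{\hat\theta+2\sigma})$. Combining with the bound on the linear term and relabelling $\sigma$ gives the assertion. There is no real obstacle in this argument; the two points worth a little attention are the identification $J_\eps'(\Wd)[\phi]=\langle\E_{\bm\de,\bm\xi},\phi\rangle$, which uses $\phi\in K_{\bm\de,\bm\xi}^\perp$ in an essential way, and the uniform-in-$s$ control of $\int_\Om|\Wd+s\phi|^{p-1}\phi^2$, for which the uniform $L^{p+1}$-bound on the ansatz $\Wd$ is precisely what is needed. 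Alternatively, one may avoid the integral Taylor formula altogether and expand the term $|\Wd+\phi|^{p+1}-|\Wd|^{p+1}$ pointwise using Lemma \ref{yan}, estimating the remainder by $C\big(|\Wd|^{p-1}\phi^2+|\phi|^{p+1}\big)$ and then integrating; the resulting bounds are the same.
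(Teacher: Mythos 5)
Your argument is correct and in substance coincides with the paper's proof: both reduce $J_\eps(\Wd+\phi_{\bm\de,\bm\xi})-J_\eps(\Wd)$ to a linear term controlled by the error estimate of Proposition \ref{estimate err} multiplied by $\|\phi_{\bm\de,\bm\xi}\|_\H$, plus quadratic and higher order terms controlled by $\|\phi_{\bm\de,\bm\xi}\|_\H^2$ via H\"older, Sobolev and the bound \eqref{phi est}. The only cosmetic difference is that you identify $J_\eps'(\Wd)[\phi_{\bm\de,\bm\xi}]=\langle\E_{\bm\de,\bm\xi},\phi_{\bm\de,\bm\xi}\rangle$ through the orthogonality $\phi_{\bm\de,\bm\xi}\in K_{\bm\de,\bm\xi}^\perp$ and invoke Proposition \ref{estimate err} as a black box, whereas the paper uses $-\Delta\Wd=f(u_0)-\sum_h f(\Uh)$ and re-runs the same $L^{\frac{2N}{N+2}}$ estimate directly.
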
	
	\begin{proof}
		It's easy to see that 
		\begin{align*}
			J_\eps(\Wd+\phi_{\bm\de,\bm\xi})-J_\eps(\Wd) =& \frac 12 \|\phi_{\bm\de,\bm\xi}\|_\H^2 + \int_\Om \nabla\Wd \cdot\nabla \phi_{\bm\de,\bm\xi} - \int_\Om \Wd^p\phi_{\bm\de,\bm\xi} \\&- \frac{1}{p+1} \int_\Om \left[|\Wd+\phi_{\bm\de,\bm\xi}|^{p+1}+|\Wd|^{p+1}-(p+1)\Wd^p\phi_{\bm\de,\bm\xi}\right] \\& -\frac \eps 2\int_\Om \phi_{\bm\de,\bm\xi}^2-\eps\int_\Om \Wd\phi_{\bm\de,\bm\xi} .
		\end{align*}
		Using \eqref{phi est} 
		\begin{align*}
			\eps \int_\Om \phi_{\bm\de,\bm\xi}^2 \leq \eps |\phi|_{2,\Om} \leq \eps |\phi_{\bm\de,\bm\xi}|_{\frac{2N}{N-2},\Om}^2\leq \eps \|\phi_{\bm\de,\bm\xi}\|_\H^2 \leq C \eps^{\hat\theta+\sigma}
		\end{align*} and
		\begin{align*}
			\eps \int_\Om \Wd\phi_{\bm\de,\bm\xi} \leq \eps |\Wd|_\frac{2N}{N+2}|\phi_{\bm\de,\bm\xi}|_{\frac{2N}{N-2},\Om} \leq C \eps^{1+2\alpha}\|\phi_{\bm\de,\bm\xi}\|_\H \leq C \eps^{\hat\theta+\sigma}
		\end{align*} by \eqref{est Uconj}.
		Now by \eqref{est U2star} and \eqref{phi est}
		\begin{align*}\int_\Om \left[|\Wd+\phi_{\bm\de,\bm\xi}|^{p+1}+|\Wd|^{p+1}\right. &\left.-(p+1)\Wd^p\phi_{\bm\de,\bm\xi}\right] \leq \int_\Om  \left[|\phi_{\bm\de,\bm\xi}|^{p+1}+\phi^2\Wd^{p-1}\right] \\ &\leq |\phi_{\bm\de,\bm\xi}^{p+1}|_{\frac{2N}{N+2},\Om}+|\phi_{\bm\de,\bm\xi}|_{\frac{N}{N-2},\Om}|\Wd^{p-1}|_{\frac{N}{2},\Om} \\ \leq & C \|\phi_{{\bm\de,\bm\xi}}\|_\H^{p+1}+\|\phi_{\bm\de,\bm\xi}\|_\H^2|\Wd|_{\frac{2N}{N-2},\Om}^{p-1} \\ \leq & C \(\eps^{(p+1)\frac{\hat\theta}{2}+\sigma}+\eps^{{\hat\theta}+\sigma} \)\leq C \eps^{{\hat\theta}+\sigma}.
		\end{align*}
		Moreover
		\begin{align*}
			\int_\Om \nabla\Wd \cdot\nabla \phi_{\bm\de,\bm\xi} - \int_\Om \Wd^p\phi_{\bm\de,\bm\xi} =& -\int_\Om \left[\Delta \Wd-\Wd^p\right]\phi_{\bm\de,\bm\xi} \\=& \int_\Om  \left[f(u_0)-\sum f(\Uh)-f(u_0-\sum P\Uh )\right]\phi_{\bm\de,\bm\xi} \\ \leq & |f(u_0)-\sum f(\Uh)-f(u_0-\sum P\Uh )  |_{\frac{2N}{N+2},\Om} |\phi_{\bm\de,\bm\xi}|_{\frac{2N}{N-2},\Om} 
			\\\leq & C \eps^{\hat\theta+\sigma} 
		\end{align*} as in the proof of Proposition  \ref{estimate err}.
	\end{proof}
	
	Now we want to evaluate and find an expansion of $$J_\eps\(u_0-\sum_{i=1}^k P\Ui\).$$

	\begin{prop} \label{prop energy}
		Let $N>6$ and $\bm\de,\bm\xi$ satisfy \eqref{deltai} and \eqref{par} {as in Section \ref{ans}}. Then 
		it holds
		\begin{equation*}\begin{aligned}
				\tilde J_\eps (\bm d,\bm t,\bm \tau) 
				&=\e^{\hat\theta}\left(\mathfrak A \sum_i d_i^2+\mathfrak C\sum_i t_i^2 -\alpha_N\mathtt C \sum_{h<i}\frac{d_0^{N-2}}{|\tau_i-\tau_h|^{N-2}}-\frac{\mathtt C}{2}d_0^{\frac{N-2}{2}}t_0\sum_i \langle D_{N-1}^2\partial_\nu u_0(\xi_0)\tau_i, \tau_i\rangle\right)\\
				&+\mathtt A_1
				+\mathcal O\left(\e^{\hat\theta+\sigma}\right).\end{aligned}\end{equation*}
		$C^0$-uniformly with respect to  $\bm \tau=(\tau_1,\cdots,\tau_k) \in(\RR^{N-1})^k$, $\bm d=(d_1,\cdots,d_k), \bm t=(t_1,\cdots,t_k)\in \RR^{k}$ such that $|d_i|,|t_i|<a$ and $|\tau_i-\tau_h|>\rho$ for all $i=1,\cdots,k$ and $i\neq h$, and where
		\[\mathtt A_1 = \mathtt A+\e^{\theta} \mathfrak g(d_0, t_0)-k \e^{\theta+\sigma}\mathfrak f(d_0, t_0) . \]	
	\end{prop}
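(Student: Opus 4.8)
By the previous Proposition one has $\tilde J_\eps(\bm d,\bm t,\bm\tau)=J_\eps(\Wd)+\O(\eps^{\hat\theta+\sigma})$ with $\Wd=u_0-\sum_{i=1}^k P\Ui$, so it suffices to expand $J_\eps(\Wd)$ up to order $\eps^{\hat\theta}$. The plan is to split $J_\eps(\Wd)$ into the Dirichlet, the $L^{p+1}$ and the mass term. Integrating by parts and using $-\Delta P\Ui=\Ui^p$, $-\Delta u_0=u_0^p$, the Dirichlet part equals $\tfrac12\int_\Om u_0^{p+1}-\sum_i\int_\Om u_0\Ui^p+\tfrac12\sum_{i,h}\int_\Om\Ui^p P\Uh$. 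For the mass term, $\tfrac\eps2\int_\Om\Wd^2=\tfrac\eps2\int_\Om u_0^2-\eps\sum_i\int_\Om u_0 P\Ui+\tfrac\eps2\sum_{i,h}\int_\Om P\Ui P\Uh$: the first summand is a constant, the second is $\O(\eps^{1+2\alpha+\sigma})$ by \eqref{est Uconj}, and in the third only the diagonal matters, equal to $\mathtt B\eps\delta_i^2(1+o(1))$ for each $i$; after inserting $\delta_i=\eps^\alpha d_0+\eps^{\hat\alpha}d_i$ these give the constant $k\mathtt B\eps^\theta d_0^2$ (which feeds into $\mathtt A_1$ with a sign) plus an $\eps^{\hat\theta}$–contribution to the $d_i$–quadratic form.

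The $L^{p+1}$ term is the delicate one. By Remark \ref{distxi} the concentration points satisfy $|\xi_i-\xi_h|\sim\eps^{\hat\beta}|\tau_i-\tau_h|\gg\delta_i\sim\eps^\alpha$ (since $\alpha>\hat\beta$), so the bubbles are mutually far apart on their own scale. I would split $\Om=\bigcup_i B_{\eta_i}(\xi_i)\cup\bigl(\Om\setminus\bigcup_i B_{\eta_i}(\xi_i)\bigr)$; on $B_{\eta_i}(\xi_i)$ write $\Wd=-P\Ui+g_i$ with $g_i=u_0-\sum_{h\neq i}P\Uh$ small compared with $P\Ui$ there, and expand $|{-P\Ui+g_i}|^{p+1}$ by means of Lemma \ref{yan} up to the quadratic term, retaining $\int\Ui^{p+1}$, $\int\Ui^p g_i$ and $\int\Ui^{p-1}g_i^2$ and estimating the remainders exactly as in the proof of Proposition \ref{estimate err}; on the complement one expands around $u_0$ using \eqref{U out}. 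Combining with the Dirichlet part, everything organizes into the single–bubble self–energy $\tfrac1N\int_\R\U_{1,0}^{p+1}$ and the building blocks $\int_\Om u_0\Ui^p$, $\int_\Om\Ui^p\Uh$ ($h\neq i$) and $\int_\Om\Ui^p\varphi_{\de_h,\xi_h}$.

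It then remains to evaluate these blocks with precision $\eps^{\hat\theta}$. For the first, $\int_\Om u_0\Ui^p=u_0(\xi_i)\,\mathtt C\,\delta_i^{(N-2)/2}+\O(\cdots)$; one Taylor–expands $u_0(\xi_i)=-\partial_\nu u_0(\hat\xi_i)\,\eta_i+\O(\eta_i^2)$ with $\eta_i=\eps^\beta t_0+\eps^{\tilde\beta}t_i$ from \eqref{etai}, together with $\partial_\nu u_0(\hat\xi_i)=\partial_\nu u_0(\xi_0)+\tfrac12\eps^{2\hat\beta}\langle D_{N-1}^2\partial_\nu u_0(\xi_0)\tau_i,\tau_i\rangle+o(\eps^{2\hat\beta})$, the linear term vanishing because $\xi_0$ is a critical point of $\xi\mapsto\partial_\nu u_0(\xi)$ on $\partial\Om$. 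For the third, Lemma \ref{yan0} and the estimate $H(\xi_h,\xi_i)=\tfrac{1}{2^{N-2}\eta_i^{N-2}}+o(\eta_i^{-(N-2)})$ give $\int_\Om\Ui^p\varphi_{\de_h,\xi_h}=\alpha_N\mathtt C\,\frac{(\delta_i\delta_h)^{(N-2)/2}}{2^{N-2}\eta_i^{N-2}}+\O(\cdots)$. For the second, the classical interaction estimate yields $\int_\Om\Ui^p\Uh=\alpha_N\mathtt C\,\frac{(\delta_i\delta_h)^{(N-2)/2}}{|\xi_i-\xi_h|^{N-2}}+\O(\cdots)$, and $|\xi_i-\xi_h|^{N-2}=\eps^{(N-2)\hat\beta}|\tau_i-\tau_h|^{N-2}(1+o(1))$ by Remark \ref{distxi}. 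Now I would insert $\delta_i=\eps^\alpha d_0+\eps^{\hat\alpha}d_i$, $\eta_i=\eps^\beta t_0+\eps^{\tilde\beta}t_i$ and sort the powers of $\eps$ using $\theta=1+2\alpha=(\alpha-\beta)(N-2)$, $\hat\theta=1+2\hat\alpha=(\alpha-\hat\beta)(N-2)$ and $\hat\beta=\hat\alpha-\alpha=\tilde\beta-\beta$ from Section \ref{ans}. All contributions depending only on $(d_0,t_0,\xi_0)$ collect into the $\eps$–independent constant $\mathtt A$, the $\eps^\theta$–coefficient $\mathfrak g(d_0,t_0)$ and a lower–order remainder $-k\eps^{\theta+\sigma}\mathfrak f(d_0,t_0)$, together $\mathtt A_1$; the terms linear in $d_i$ or in $t_i$ have coefficients that are precisely the left–hand sides of system \eqref{sys} and hence vanish; what survives at order $\eps^{\hat\theta}$ is a quadratic form in $(\bm d,\bm t)$ which, after using the same identities \eqref{sys}, reduces to $\mathfrak A\sum_i d_i^2+\mathfrak C\sum_i t_i^2$ for explicit positive constants $\mathfrak A,\mathfrak C$, plus the new pairwise interaction $-\alpha_N\mathtt C\sum_{h<i}d_0^{N-2}|\tau_i-\tau_h|^{-(N-2)}$ coming from $\int\Ui^p\Uh$, plus the curvature term $-\tfrac{\mathtt C}{2}d_0^{(N-2)/2}t_0\sum_i\langle D_{N-1}^2\partial_\nu u_0(\xi_0)\tau_i,\tau_i\rangle$ coming from the $\tau$–expansion of $\partial_\nu u_0(\hat\xi_i)$, up to an error $\O(\eps^{\hat\theta+\sigma})$. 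The $C^0$–uniformity in $(\bm d,\bm t,\bm\tau)$ over the region \eqref{par} follows since all of the above estimates are uniform there, $|\tau_i-\tau_h|$ being bounded away from $0$.

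I expect the main obstacle to be the combinatorial bookkeeping inside the $L^{p+1}$ term: one must expand the critical power around each tightly concentrated bubble, keep exactly the genuinely non–negligible pieces — the $u_0$–bubble interaction and the diagonal (self–)Robin interaction, both of order $\eps^\theta$, and the off–diagonal bubble–bubble interaction, of order $\eps^{\hat\theta}$ — and show that all remaining terms are $o(\eps^{\hat\theta})$, which is exactly where the precise two–scale ansatz \eqref{deltai} and the criticality of $(d_0,t_0,\xi_0)$ for $\Psi$ (i.e.\ system \eqref{sys}) enter, and is also where the quadratic form in $(\bm d,\bm t)$ is checked to be diagonal.
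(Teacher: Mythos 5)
Your overall route coincides with the paper's: reduce to expanding $J_\eps(\Wd)$ via the previous proposition, isolate the blocks $\int_\Om u_0\Ui^p$, $\int_\Om \Ui^p\Uh$, $\int_\Om \Ui^p\varphi_{\de_h,\xi_h}$ and the mass term, insert the two--scale ansatz \eqref{deltai}, and use the criticality system \eqref{sys} to kill the linear terms and diagonalize the quadratic form. The one step you wave through is, however, exactly the step on which the whole statement hinges, and as written your bookkeeping does not close there. You assert $\int_\Om\Ui^p\varphi_{\de_h,\xi_h}=\alpha_N\mathtt C\,\frac{(\de_i\de_h)^{(N-2)/2}}{2^{N-2}\eta_i^{N-2}}+\O(\cdots)$ for all pairs $(i,h)$, which is of size $\eps^{\theta}$ (recall $\eta_i\sim\eps^\beta$, $\de_i\sim\eps^\alpha$, $(\alpha-\beta)(N-2)=\theta$), i.e.\ much larger than the precision $\eps^{\hat\theta}=\eps^{\theta+2\hat\beta}$ you aim for, and it depends on $t_i$ through $\eta_i$; yet in your final count the off--diagonal part of this block appears neither in $\mathtt A_1$, nor in the linear or quadratic terms, nor in the error $\O(\eps^{\hat\theta+\sigma})$. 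Saying that the remainders are ``estimated exactly as in Proposition \ref{estimate err}'' does not cover it: that proposition works in a different norm and at a different precision, and does not address this interaction at all.

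Moreover, the value you quote is the diagonal (Rey) asymptotics of the regular part, which is only correct for $h=i$. For $h\neq i$ the geometry of \eqref{deltai} gives $|\xi_i-\xi_h|\sim\eps^{\hat\beta}\gg\eta_i\sim\eps^{\beta}$, so $H(\xi_h,\xi_i)$ behaves like $|\xi_i-\xi_h^{\ast}|^{-(N-2)}$ with $\xi_h^{\ast}$ the reflected point, i.e.\ it is comparable to $|\xi_i-\xi_h|^{-(N-2)}$ and not to $\eta_i^{-(N-2)}$. Hence $\int_\Om\Ui^p\varphi_{\de_h,\xi_h}$ ($h\neq i$) is of order exactly $\eps^{\hat\theta}$ --- the same order as the free interaction $\int_\Om\Ui^p\Uh$ from which you extract the term $-\alpha_N\mathtt C\sum_{h<i}d_0^{N-2}|\tau_i-\tau_h|^{-(N-2)}$ --- so it cannot be discarded by a crude bound: it must be computed to precision $o(\eps^{\hat\theta})$ and its effect on the coefficient of $\sum_{h<i}|\tau_i-\tau_h|^{-(N-2)}$ tracked explicitly. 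The paper's own proof is terse at precisely this point (it bounds the block by $(\de_i\de_h)^{(N-2)/2}\eta_h^{-(N-2)}$ and then absorbs it into $\O(\eps^{\hat\theta+\sigma})$), so you cannot treat it as routine; a complete proof must supply this computation. A smaller remark: your bound $\eps\int_\Om u_0P\Ui\lesssim\eps\de_i^2$ from \eqref{est Uconj} only gives $\eps^{\theta}$, which is not $o(\eps^{\hat\theta})$; you need the extra smallness coming from $u_0$ vanishing on $\partial\Om$ (so that $u_0\sim\eps^{\beta}$ on $B_{\eta_i}(\xi_i)$), as in the paper's treatment of the corresponding term.
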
 
  For the definition of $\tilde J_\eps$ we refer to \eqref{Jtilde}.
	\begin{proof} We can write $J_\e(u_0-\sum_{i=1}^k P\Ui)$ as
		$$\begin{aligned}
			&J_\e(u_0-\sum_{i=1}^k P\Ui)=\underbrace{\frac12\int_\Omega |\nabla u_0|^2-\frac{1}{2^\star}\int_\Omega|u_0|^{2^\star}}_{:=J_0(u_0)}-\e\underbrace{\frac 12 \int_\Omega u_0^2}_{:=\tilde J_0(u_0)}+\underbrace{\sum_{i=1}^k\left(\frac 12 \int_\Omega|\nabla P\Ui|^2-\frac{1}{2^\star}\int_\Omega (P\Ui)^{2^\star}\right)}_{:=(I)}\\
			&+\underbrace{\sum_{i>h}\int_\Omega\left(\nabla P\Ui\nabla P\Uh-f(P\Ui)P\Uh\right)}_{:=(II)}-\underbrace{\sum_{h<i}\int_\Omega f(P\Ui)P\Uh}_{:=(III)}\\
			&-\underbrace{\frac \e 2\sum_{i=1}^k\int_\Omega( P\Ui)^2}_{(IV)}-\underbrace{\e\sum_{h<i}\int_\Omega P\Ui P\Uh}_{:=(V)}-\underbrace{\e\sum_{i=1}^k\int_\Omega u_0 P\Ui}_{(VI)}\\
			&-\underbrace{\sum_{i=1}^k\int_\Omega u_0(P\Ui)^{2^\star-1}}_{:=(VII)}-\underbrace{\sum_{i=1}^k \int_\Omega \left(\nabla u_0\nabla P\Ui-f(u_0)P\Ui\right)}_{=0}\\
			&-\underbrace{\int_\Omega \left[F(u_0-\sum_{i=1}^k P\Ui)-F(u_0)-\sum_{i=1}^k F(P\Ui)-\sum_{i\neq h} f(P\Ui)P\Uh\right]}_{(VIII)}\\
			&-\underbrace{\int_\Omega\left[\sum_{i=1}^kf(u_0)P\Ui-\sum_{i=1}^ku_0 f(P\Ui)\right]}_{:=(IX)}
		\end{aligned}$$
		where  $f(u)=|u|^{p-1}u$ and $F(s)=\int_0^s f(t)dt$.
		Now we evaluate each term. Recalling that $\eta_i:={\rm dist}(\xi_i, \partial\Omega)=\O(\eps^\beta)$, we have
		$$\begin{aligned}
			(I)&=\left(\frac 12-\frac{1}{2^\star}\right)\sum_i \int_\Omega (\Ui)^{2^\star}+\frac 12 \sum_i \int_\Omega (\Ui)^{2^\star-1}\varphi_{\delta_i, \xi_i}+\mathcal O\left(\int_\Omega (\Ui)^{2^\star-2}\varphi_{\delta_i, \xi_i}^2+\int_\Omega \varphi_{\delta_i, \xi_i}^{2^\star}\right)\\
			&=\frac kN \int_{\mathbb R^N} \mathcal \U^{2^\star}+\frac 12 \alpha_N\sum_i \delta_i^{N-2}H(\xi_i, \xi_i)\int_{\mathbb R^N}\mathcal \U^{2^\star-1}+\mathcal O{\left(\frac{\delta^{N}_i}{\eta_i^N}\right)}
			\\		&=\frac kN \int_{\mathbb R^N} \mathcal \U^{2^\star}+\frac 12 \alpha_N\sum_i \delta_i^{N-2}H(\xi_i, \xi_i)\int_{\mathbb R^N}\mathcal \U^{2^\star-1}+\mathcal O\left(\e^{\hat\theta+\sigma}\right)
		\end{aligned}$$
		
		$$\begin{aligned}
			(IV)&=\frac \e 2 \sum_i \d_i^2\int_{\mathbb R^N}\mathcal U^2+\mathcal O(\e\sum_i\|\varphi_{\delta_i, \xi_i}\|_{L^2(\Omega)}^2)\\
			&+\e\sum_i\|\varphi_{\delta_i, \xi_i}\|_{L^\infty(\Omega)}\int_{B_{\eta_i}(\xi_i)}\Ui+\mathcal O(\e\sum_i\|\varphi_{\delta_i, \xi_i}\|_{L^\frac{2N}{N+2}(\Omega)}\|\Ui\|_{L^{2^\star}(\Omega\setminus B_{\eta_i}(\xi_i))})\\
			&=\frac \e 2 \sum_i \d_i^2\int_{\mathbb R^N}\mathcal U^2+\mathcal O\left(\e\frac{\delta^{N-2}_i}{\eta_i^{N-2}}\right)\\
			&=\frac \e 2 \sum_i \d_i^2\int_{\mathbb R^N}\mathcal U^2+\mathcal O\left(\e^{\hat\theta+\sigma}\right)\end{aligned}$$
		Now
		$$\begin{aligned}
			(VII)&=\sum_i \int_\Omega (u_0 \Ui)^{2^\star-1}+\sum_i \int_\Omega u_0\left((P\Ui)^{2^\star}-(\Ui)^{2^\star}\right)\\
			&=\sum_i \d_i^{\frac{N-2}{2}}\int_{\frac{\Omega-\xi_i}{\eta_i}}u_0(\delta_i y+\xi_i)U^{2^\star-1}+\mathcal O\left(\sum_i\int_{B_{\eta_i}(\xi_i)}u_0(x)\varphi_{\d_i, \xi_i}^{2^\star-1}\right)\\
			&+\mathcal O\left(\sum_i \int_{B_{\eta_i}(\xi_i)}u_0(x) (\Ui)^{2^{\star}-2}\varphi_{\d_i, \xi_i}\right)+\mathcal O\left(\sum_i \int_{\Omega\setminus B_{\eta_i}(\xi_i)}u_0(x)(\Ui)^{2^\star-1}\right) \\
			&=\sum_i \d_i^{\frac{N-2}{2}}u_0(\xi_i)\int_{\mathbb R^N} \U^{2^\star-1}+\mathcal O\left(\sum_i \d_i^{\frac{N-2}{2}}u_0(\xi_i)\int_{\frac{\eta_i}{\delta_i}}^{+\infty}\frac{r^{N-1}}{(1+r^2)^{\frac{N+2}{2}}}\right)\\
			&+\mathcal O\left(\sum_i \|\varphi_{\delta_i, \xi_i}\|^{2^\star-1}_{L^\infty(\Omega)}\eta_i^N u_0(\xi_i)\right)+\mathcal O\left(\sum_i \d_i^2 \|\varphi_{\delta_i, \xi_i}\|_{L^\infty(\Omega)}\eta_i^N u_0(\xi_i)\int_{B_1(0)}\frac{1}{|y|^4}\right)\\
			&+\mathcal O\left(\sum_i \d_i^{\frac{N-2}{2}}u_0(\xi_i)\int_{\frac{\eta_i}{\d_i}}^{+\infty}\frac{r^{N-1}}{(1+r^2)^{\frac{N+2}{2}}}\right)
		\end{aligned}$$
		Hence
		$$\begin{aligned}
			(VII)&=\sum_i \d_i^{\frac{N-2}{2}}u_0(\xi_i)\int_{\mathbb R^N} \U^{2^\star-1}+\mathcal O\left(\sum_i \d_i^{\frac{N-2}{2}}u_0(\xi_i)\frac{\d_i^2}{\eta_i^2}\right)\\
			&=\sum_i \d_i^{\frac{N-2}{2}}u_0(\xi_i)\int_{\mathbb R^N} \U^{2^\star-1}+\mathcal O\left(\e^{\hat\theta+\sigma}\right).
		\end{aligned}$$
		The other important term is $(III)$. Indeed
		$$\begin{aligned} (III)&=\sum_{h<i}\int_\Omega (P\Ui)^{2^\star-1}P\Uh=\sum_{h<i}\int_\Omega(\Ui)^{2^\star-1}\U_h-\sum_{h<i}\int_\Omega (\Ui)^{2^\star-1}\varphi_{\d_h, \xi_h}\\
			&+\sum_{h<i}\int_\Omega\left((P\Ui)^{2^\star-1}-(\Ui)^{2^\star-1}\right)P\Uh\\
			&=\sum_{h<i}\frac{\d_i^{\frac{N-2}{2}}\alpha_N\d_h^{\frac{N-2}{2}}}{|\xi_i-\xi_h|^{N-2}}\int_{\mathbb R^N}\U^{2^\star-1}\left(1+o(1)\right)+\mathcal O\left(\sum_{h<i}\frac{\d_h^{\frac{N-2}{2}}\d_i^{\frac{N-2}{2}}}{\eta_h^{N-2}}\right)\\
			&+\mathcal O\left(\sum_{h<i}\int_{B_{\eta_i}(\xi_i)}(\Ui)^{2^\star-2}\varphi_{\d_i, \xi_i}\Uh\right)+\mathcal O\left(\sum_{h<i}\int_{\Omega\setminus B_{\eta_i}(\xi_i)}(\Ui)^{2^\star-1}\Uh\right)\\
			&=\sum_{h<i}\frac{\d_i^{\frac{N-2}{2}}\alpha_N\d_h^{\frac{N-2}{2}}}{|\xi_i-\xi_h|^{N-2}}\int_{\mathbb R^N}\U^{2^\star-1}\left(1+o(1)\right)+\mathcal O\left(\sum_{h<i}\frac{\d_h^{\frac{N-2}{2}}\d_i^{\frac{N-2}{2}}}{\eta_h^{N-2}}\right)\\
			&+\mathcal O\left(\sum_{h<i}\frac{\d_i^{\frac{N-2}{2}}\d_h^{\frac{N-2}{2}}}{|\xi_h-\xi_h|^{N-2}}\frac{\d_i^2}{\eta_i^2}\right)\\
			&=\sum_{h<i}\frac{\d_i^{\frac{N-2}{2}}\alpha_N\d_h^{\frac{N-2}{2}}}{|\xi_i-\xi_h|^{N-2}}\int_{\mathbb R^N}\U^{2^\star-1}\left(1+o(1)\right)\\
			&=\sum_{h<i}\frac{\d_i^{\frac{N-2}{2}}\alpha_N\d_h^{\frac{N-2}{2}}}{|\xi_i-\xi_h|^{N-2}}\int_{\mathbb R^N}\U^{2^\star-1}+\mathcal O\left(\e^{\hat\theta+\sigma}\right)
		\end{aligned}$$
		We have to show that the other terms are of higher order.\\ Reasoning as the third term of (III) we have 
		$$\begin{aligned} (II)&=\sum_{h>i}\int_\Omega\left((P\Ui)^{2^\star-1}-(\Ui)^{2^\star-1}\right)P\Uh=\mathcal O\left(\sum_{h>i}\frac{\d_i^{\frac{N-2}{2}}\d_h^{\frac{N-2}{2}}}{|\xi_h-\xi_h|^{N-2}}\frac{\d_i^2}{\eta_i^2}\right)\\& =\mathcal O\left(\e^{\hat\theta+\sigma}\right)\end{aligned}$$
		Now
		$$\begin{aligned} (V)&\lesssim\e \sum_{i<h}\int_\Omega \Ui\Uh\lesssim \e \sum_{i<h} \d_i^{\frac{N-2}{2}}\d_h^{\frac{N-2}{2}}\int_{B_{\eta_i}(\xi_i)}\frac{1}{|x-\xi_i|^{N-2}|x-\xi_h|^{N-2}}\\ &+\e \sum_{i<h}\int_{\Omega\setminus B_{\eta_i}(\xi_i)} \Ui\Uh\\
			&\lesssim \e\sum_{i<h} \d_i^{\frac{N-2}{2}}\d_h^{\frac{N-2}{2}}\int_{B_{\eta_i}(0)}\frac{1}{|y|^{N-2}|y+\xi_i-\xi_h|^{N-2}}+\e\sum_{i<h} \frac{\d_i^{\frac{N-2}{2}}\d_h^{\frac{N-2}{2}}}{|\xi_i-\xi_h|^{N-2}}\d_i^2\int_{\frac{\eta_i}{\d_i}}^{+\infty}\frac{r^{N-1}}{(1+r^2)^{N-2}}\\
			&\lesssim \e\sum_{i<h} \frac{\d_i^{\frac{N-2}{2}}\d_h^{\frac{N-2}{2}}}{|\xi_i-\xi_h|^{N-2}}\int_{B_{\eta_i}(0)}\frac{1}{|y|^{N-2}}+\e\sum_{i<h} \frac{\d_i^{\frac{N-2}{2}}\d_h^{\frac{N-2}{2}}}{|\xi_i-\xi_h|^{N-2}}\eta_i^2\\
			&\lesssim \e \sum_{i<h} \frac{\d_i^{\frac{N-2}{2}}\d_h^{\frac{N-2}{2}}}{|\xi_i-\xi_h|^{N-2}}=o\left(\sum_{i<h} \frac{\d_i^{\frac{N-2}{2}}\d_h^{\frac{N-2}{2}}}{|\xi_i-\xi_h|^{N-2}}\right)=\mathcal O\left(\e^{\hat\theta+\sigma}\right)
		\end{aligned}$$
		Now
		$$\begin{aligned} (VI)&\lesssim\e \sum_i \d_i^{\frac{N-2}{2}}u_0(\xi_i) \eta_i^2=\mathcal O\left(\e^{\hat\theta+\sigma}\right) \end{aligned}$$
		Now we can split the term $(VIII)+(IX)$ into the sum of integrals on the balls $ B_{\eta_h}(\xi_h)$ and the integral on  $\Omega \backslash \cup B_{\eta_h}(\xi_h) $ that we call $A_h$ and $B$ respectively, 
		where	we can evaluate the integral $B$ on $\Omega \setminus \cup B_{\eta_h}(\xi_h)$ as
	$$\begin{aligned}|B|\leq &
		\int_{\Omega\setminus \bigcup_h B_{\eta_h}(\xi_h)}\left|F(u_0-\sum_h P\Uh)-F(u_0)+f(u_0)\sum_h P\Uh\right|\\
		&+\int_{\Omega\setminus \bigcup_h B_{\eta_h}(\xi_h)}u_0\sum_h f(P\Uh)+\sum_{i\neq h }\int_{\Omega\setminus \bigcup_h B_{\eta_h}(\xi_h)}f(P\Ui)P\Uh +\sum_h \int_{\Omega\setminus \bigcup_h B_{\eta_h}(\xi_h)}F(P\Uh)\\
		\lesssim & \sum_h \int_{\Omega\setminus \bigcup_h B_{\eta_h}}u_0^{2^\star-2}(\Uh)^2+\sum_h \int_{\Omega\setminus \bigcup_h B_{\eta_h}}u_0 (\Uh)^{2^\star-1}\\
		&+\sum_{i\neq h}\int_{\Omega\setminus \bigcup_h B_{\eta_h}}\Ui^{2^\star-1}\Uh+\sum_h \int_{\Omega\setminus \bigcup_h B_{\eta_h}}\Uh^{2^\star}\\
		\lesssim &\sum_h u_0^{2^\star-2}(\xi_h)\d_h^2\int_{\frac{\eta_h}{\d_h}}^{+\infty}\frac{r^{N-1}}{(1+r^2)^{N-2}}+\sum_h u_0(\xi_h)\d_h^{\frac{N-2}{2}}\int_{\frac{\eta_h}{\d_h}}^{+\infty}\frac{r^{N-1}}{(1+r^2)^{\frac{N+2}{2}}}\\
		&+\sum_{i\neq h} \frac{\d_i^{\frac{N-2}{2}}\d_h^{\frac{N-2}{2}}}{|\xi_i-\xi_h|^{N-2}}\int_{\frac{\eta_h}{\d_h}}^{+\infty}\frac{r^{N-1}}{(1+r^2)^{\frac{N+2}{2}}}+\sum_h \int_{\frac{\eta_h}{\d_h}}^{+\infty}\frac{r^{N-1}}{(1+r^2)^{N}}\\
		\lesssim &\sum_h u_0^{2^\star-2}(\xi_h)\d_h^2\left(\frac{\d_h}{\eta_h}\right)^{N-4}+\sum_h u_0(\xi_h)\d_h^{\frac{N-2}{2}}\left(\frac{\d_h}{\eta_h}\right)^2\\
		&+\sum_{i\neq h}\frac{\d_i^{\frac{N-2}{2}}\d_h^{\frac{N-2}{2}}}{|\xi_i-\xi_h|^{N-2}}\left(\frac{\d_h}{\eta_h}\right)^2+\sum_h \left(\frac{\d_h}{\eta_h}\right)^N\\
		=&\mathcal O\left(\e^{\hat\theta+\sigma}\right)
	\end{aligned}$$
	and in a similar way for the integral $A_h$  on a ball $B_{\eta_h}(\xi_h)$ we get that \[|A_h|= \O(\eps^{\hat\theta+\sigma}).\] 
	At the end we get
	
	$$\begin{aligned} &J_\e(u_0-\sum_h P\Ui)=\underbrace{J_0(u_0)-\e\tilde J_0(u_0)+\frac kN  \int_{\mathbb R^N}\mathcal U^{2^\star}}_{:=\mathtt A}\\
		&+\frac 12 \alpha_N \sum_i \d_i^{N-2}H(\xi_i, \xi_i)\int_{\mathbb R^N}\mathcal \U^{2^\star-1}-\frac \e 2 \sum_i \d_i^2\int_{\mathbb R^N}\mathcal \U^2-\sum_i \d_i^{\frac{N-2}{2}}u_0(\xi_i)\int_{\mathbb R^N} U^{2^\star-1}\\
		&-\sum_{h<i}\frac{\d_i^{\frac{N-2}{2}}\alpha_N\d_h^{\frac{N-2}{2}}}{|\xi_i-\xi_h|^{N-2}}\int_{\mathbb R^N}U^{2^\star-1}+\mathcal O\left(\e^{\hat\theta+\sigma}\right)\end{aligned}$$
	
	We put $$\mathtt B:=\frac 12\int_{\mathbb R^N}\mathcal U^2,\quad \mathtt C:=\int_{\mathbb R^N}\mathcal U^{2^\star-1}.$$
	
	Then
	$$\begin{aligned} &J_\e(u_0-\sum_i P\Ui)=\mathtt A+\frac 12 \alpha_N \mathtt C \sum_i \d_i^{N-2}H(\xi_i, \xi_i)- \e \mathtt B\sum_i \d_i^2-\mathtt C\sum_i \d_i^{\frac{N-2}{2}}u_0(\xi_i)\\
		&-\alpha_N\mathtt C\sum_{h<i}\frac{\d_i^{\frac{N-2}{2}}\d_h^{\frac{N-2}{2}}}{|\xi_i-\xi_h|^{N-2}}+\mathcal O\left(\e^{\hat\theta+\sigma}\right).\end{aligned}$$
	Now
	$$\begin{aligned}- \e \mathtt B\sum_h \d_i^2&=-\e\mathtt B\sum_i \left(\e^\alpha d_0+\e^{\hat\alpha}d_i\right)^2\\
		&=-\e^{1+2\alpha}\mathtt B k d_0^2-2\e^{1+\alpha+\hat\alpha}\mathtt B d_0\sum_i d_i -\e^{1+2\hat\alpha}\mathtt B\sum_i d_i^2 \end{aligned}$$
	Moreover
	$$\begin{aligned}
		&\frac 12 \alpha_N \mathtt C\sum_i \d_i^{N-2}H(\xi_i, \xi_i)=\frac 12 \alpha_N\mathtt C \sum_i \d_i^{N-2}\left[\frac{1}{2^{N-2}{\rm dist}(\xi_i, \partial\Omega)^{N-2}}+{o\left(\frac{1}{{\rm dist}(\xi_i, \partial\Omega)^{N-2}}\right)}\right]\\
		&=\frac{ 1}{2^{N-1}} \alpha_N\sum_i \e^{\alpha(N-2)}(d_0+\e^{\hat\alpha-\alpha})^{N-2}\left[\frac{1}{\e^{\beta(N-2)}t_0^{N-2}}-\frac{N-2}{\e^{\beta(N-1)}t_0^{N-1}}\e^{\tilde\beta}t_i+\frac{(N-2)(N-1)}{2\e^{\beta N}t_0^N}\e^{2\tilde\beta}t_i^2\right.\\
		&\left.+\mathcal O\left(\e^{3\tilde\beta-\beta(N+1)}\right)\right]+\normalcolor{}{o\left(\sum_i\frac{\de_i^{N-2}}{{\rm dist}(\xi_i, \partial\Omega)^{N-2}}\right)}\\
		&=\frac{\alpha_N}{2^{N-1}} \mathtt C\sum_i \e^{\alpha(N-2)}\left(d_0^{N-2}+(N-2)d_0^{N-3}d_i \e^{\hat\alpha-\alpha}+\frac{(N-2)(N-3)}{2}d_0^{N-4}d_i^2\e^{2\hat\alpha-2\alpha}+\mathcal O\left(\e^{3\hat\alpha-3\alpha}\right)\right)\\&*\left[\frac{1}{\e^{\beta(N-2)}t_0^{N-2}}-\frac{N-2}{\e^{\beta(N-1)}t_0^{N-1}}\e^{\tilde\beta}t_i+\frac{(N-2)(N-1)}{2\e^{\beta N}t_0^N}\e^{2\tilde\beta}t_i^2+\mathcal O\left(\e^{3\tilde\beta-\beta(N+1)}\right)\right]\\
		&+{o\left(\sum_i\frac{\de_i^{N-2}}{{\rm dist}(\xi_i, \partial\Omega)^{N-2}}\right)}\\
		&=\e^{(\alpha-\beta)(N-2)}\frac{\alpha_N}{2^{N-1}} \mathtt C k \frac{d_0^{N-2}}{t_0^{N-2}}\\
		&-\e^{\alpha(N-2)-\beta(N-1)+\tilde\beta}\frac{\alpha_N(N-2)}{2^{N-1}} \frac{d_0^{N-2}}{t_0^{N-1}}\mathtt C\sum_i t_i+\e^{\alpha(N-3)-\beta(N-2)+\hat\alpha}\frac{\alpha_N}{2^{N-1}} \frac{d_0^{N-3}}{t_0^{N-2}}\mathtt C\sum_id_i\\
		&+\e^{\alpha(N-2)-\beta N+2\tilde\beta}\frac{\alpha_N(N-2)(N-1)}{2^{N}} \frac{d_0^{N-2}}{t_0^{N}}\mathtt C\sum_i t_i^2-\e^{\alpha(N-3)+\hat\alpha-\beta(N-1)+\tilde\beta}\frac{\alpha_N(N-2)^2}{2^{N-1}} \frac{d_0^{N-3}}{t_0^{N-1}}\mathtt C\sum_i t_id_i\\
		&+\e^{\alpha(N-4)+2\hat\alpha-\beta(N-2)}\frac{\alpha_N(N-2)(N-3)}{2^{N}} \frac{d_0^{N-4}}{t_0^{N-2}}\mathtt C\sum_i d_i^2+\underbrace{\mathcal O\left(\e^{\alpha(N-2)+3\tilde\beta-\beta(N+1)}\right)}_{:=\mathcal O\left(\e^{\hat\theta+\sigma}\right)}\\
		&+{{o\left(\sum_i\frac{\de_i^{N-2}}{{\rm dist}(\xi_i, \partial\Omega)^{N-2}}\right)}}\\
	\end{aligned}$$
	Now by \eqref{diff xi} we have that
	$$-\alpha_N\mathtt C\sum_{h<i}\frac{\d_i^{\frac{N-2}{2}}\d_h^{\frac{N-2}{2}}}{|\xi_i-\xi_h|^{N-2}}=-\e^{\alpha(N-2)-\hat\beta(N-2)}\alpha_N \mathtt C\sum_{h<i}\frac{d_0^{N-2}}{|\tau_i-\tau_h|^{N-2}}(1+o(1)).$$
	At the end, observing that 
	\[   \hat\xi_i-\xi_0= (\eps^{\hat\beta}\tau_i, \vartheta(\xi'_0+\eps^{\hat\beta}\tau_i)-\vartheta(\xi_0'))=  (\eps^{\hat\beta}\tau_i, \eps^{\hat\beta} \underbrace{\nabla_{N-1}\vartheta(\xi'_0)^T}_{=0}\cdot\tau_i+\O(\eps^{2\hat\beta}))
	\] and 
\begin{align*}
		\nabla u_0(\hat\xi_i)^T\cdot\nu(\hat\xi_i) &= \partial_\nu u_0(\hat\xi_i) = \partial_\nu u_0(\xi_0)+\nabla_N \partial_\nu u_0(\xi_0)^T\cdot(\xi_i-\xi_0) \\&+ \frac{1}{2} (D_N^2\partial_{\nu}u_0(\xi_i) (\xi_i-\xi_0))^T\cdot(\xi_i-\xi_0)+ \O(|\hat\xi_i-\xi_0|^3)\\&=
		\partial_\nu u_0(\xi_0)+\eps^{\hat\beta} \underbrace{\nabla_{N-1}\partial_\nu u_0(\xi_0)^T}_{\mbox{\tiny We need this equal to zero}}\cdot \tau_i +\O(\eps^{2\hat\beta})\partial_{x_N}\partial_\nu u_0(\xi_0)\\&+ \frac 12 \eps^{2\hat\beta}(D_{N-1}^2\partial_\nu u_0(\xi_0)\tau_i)^T\cdot \tau_i + \O(\eps^{3\hat\beta}),
	\end{align*}
	as $\hat\xi_i\in \Om$ and $\xi_0$ is a critical point of $\partial_\nu u_0(\xi)$, we have
	\begin{align*}
		u_0(\xi_i)&= u_0(\hat\xi_i+(\eps^\beta t_0+\eps^{\tilde\beta}t_i)\nu(\hat\xi_i)) \\&= \underbrace{u_0(\hat\xi_i)}_{=0} +(\eps^\beta t_0+\eps^{\tilde\beta}t_i)\nabla u_0(\hat\xi_i)^T \cdot\nu(\hat\xi_i)+ \O((\eps^\beta t_0+\eps^{\tilde\beta}t_i)^2) \\&= (\eps^\beta t_0+\eps^{\tilde\beta}t_i) \partial_\nu u_0(\xi_0)+ \frac 12(\eps^\beta t_0+\eps^{\tilde\beta}t_i)\eps^{2\hat\beta}(D_{N-1}^2\partial_\nu u_0(\xi_0)\tau_i)^T\cdot \tau_i \\&+\O((\eps^\beta t_0+\eps^{\tilde\beta}t_i)^2) + \O(\eps^{2\hat\beta}(\eps^\beta t_0+\eps^{\tilde\beta}t_i)).
	\end{align*}
	Then
	$$\begin{aligned}
		-\mathtt C\sum_i \d_i^{\frac{N-2}{2}}u_0(\xi_i)&=-\mathtt C\sum_i \e^{\alpha\frac{N-2}{2}}\left(d_0^{\frac{N-2}{2}}+\frac{N-2}{2}\e^{\hat\alpha-\alpha}d_0^{\frac{N-4}{2}}d_i+\frac{(N-2)(N-4)}{4}\e^{2(\hat\alpha-\alpha)}d_0^{\frac{N-6}{2}}d_i^2\right)u_0(\xi_i)\\
		&=-\mathtt C \e^{\alpha\frac{N-2}{2}+\beta}d_0^{\frac{N-2}{2}}t_0 k \partial_\nu u_0(\xi_0)+\O(\eps^{\alpha\frac{N-2}{2}+2\beta}t_0 )
		\\&-\mathtt C \e^{\alpha\frac{N-2}{2}+\tilde\beta}d_0^{\frac{N-2}{2}}\partial_\nu u_0(\xi_0)\sum_i t_i+\sum\O(\eps^{\alpha\frac{N-2}{2}+\beta+\tilde\beta}t_i)
		\\&-\mathtt C\frac{N-2}{2} \e^{\alpha\frac{N-2}{2}+\hat\alpha-\alpha+\beta}d_0^{\frac{N-4}{2}}t_0\partial_\nu u_0(\xi_0)\sum_i d_i+\sum\O(\eps^{\alpha\frac{N-2}{2}+2\beta+\hat\alpha-\alpha}d_i)\\&
		-\frac{\mathtt C}{ 2} \e^{\alpha\frac{N-2}{2}+\beta+2\hat\beta}d_0^{\frac{N-2}{2}}t_0\sum_i (D_{N-1}^2\partial_\nu u_0(\xi_0)\tau_i)^T\cdot \tau_i+\sum\O_{\tau_i}(\eps^{\alpha\frac{N-2}{2}+\tilde\beta+2\hat\beta})\\
		&-\mathtt C\frac{N-2}{2} \e^{\alpha\frac{N-2}{2}+\hat\alpha-\alpha+\tilde\beta}d_0^{\frac{N-4}{2}}\partial_\nu u_0(\xi_0)\sum_i d_it_i+\sum\O(\eps^{\alpha\frac{N-2}{2}+\hat\alpha-\al+\beta+\tilde\beta}t_id_i)\\
		&-\mathtt C \frac{(N-2)(N-4)}{4}\e^{\alpha\frac{N-2}{2}+2\hat\alpha-2\alpha+\beta}d_0^{\frac{N-6}{2}}t_0\partial_\nu u_0(\xi_0)\sum_i d^2_i\\&+\sum\O(\eps^{\alpha\frac{N-2}{2}+2\beta+2\hat\al-2\al}d_i^2)+\sum\O(\eps^{\alpha\frac{N-2}{2}+2\tilde\beta}t_i^2)\\
	\end{aligned}$$
	where the lower order terms are $\mathcal O\left(\e^{\hat\theta+\sigma}\right)$. Now the zero order terms are
	$$\begin{aligned}&\mathtt A +\e^{(\alpha-\beta)(N-2)}\frac{\alpha_N}{2^{N-1}} \mathtt C k \frac{d_0^{N-2}}{t_0^{N-2}}-\e^{1+2\alpha}\mathtt B k d_0^2-\mathtt C \e^{\alpha\frac{N-2}{2}+\beta}d_0^{\frac{N-2}{2}}t_0 k \partial_\nu u_0(\xi_0)
		+ \eps^{\theta+\sigma} k\mathfrak f(d_0, t_0)
		\\&=\mathtt A+\e^{\theta}k \left(	\frac{\alpha_N}{2^{N-1}} \mathtt C\frac{d_0^{N-2}}{t_0^{N-2}}-\mathtt B d_0^2-\mathtt C d_0^{\frac{N-2}{2}}t_0  \partial_\nu u_0(\xi_0)\right) + \eps^{\theta+\sigma} k \mathfrak f(d_0, t_0)
		.
	\end{aligned}$$
	The first order terms are
	$$\begin{aligned}&-2\e^{1+\alpha+\hat\alpha}\mathtt B d_0\sum_i d_i +\e^{\alpha(N-3)-\beta(N-2)+\hat\alpha}\frac{\alpha_N}{2^{N-1}} \frac{d_0^{N-3}}{t_0^{N-2}}\mathtt C\sum_id_i-\mathtt C\frac{N-2}{2} \e^{\alpha\frac{N-2}{2}+\hat\alpha+\beta}d_0^{\frac{N-4}{2}}t_0\partial_\nu u_0(\xi_0)\sum_i d_i\\
		&-\e^{\alpha(N-2)-\beta(N-1)+\tilde\beta}\frac{\alpha_N(N-2)}{2^{N-1}} \frac{d_0^{N-2}}{t_0^{N-1}}\mathtt C\sum_i t_i-\mathtt C \e^{\alpha\frac{N-2}{2}+\tilde\beta}d_0^{\frac{N-2}{2}}\partial_\nu u_0(\xi_0)\sum_i t_i\\
		&=\e^{\hat\theta}\underbrace{\left(-2\mathtt B d_0+\frac{\alpha_N}{2^{N-1}}\frac{d_0^{N-3}}{t_0^{N-2}}\mathtt C-\mathtt C\frac{N-2}{2}d_0^{\frac{N-4}{2}}t_0\partial_\nu u_0(\xi_0)\right)}_{\hbox{\tiny{}} =0}\sum_i d_i\\
		&+\e^{\hat\theta}\underbrace{\left(-\frac{\alpha_N(N-2)}{2^{N-1}}\frac{d_0^{N-2}}{t_0^{N-1}}\mathtt C-\mathtt C d_0^{\frac{N-2}{2}}\partial_\nu u_0(\xi_0)\right)}_{\hbox{\tiny{}} =0}\sum_i t_i
	\end{aligned}$$
	The quantities in the previous box are zero as $d_0$ and $t_0$ satisfy the system in \eqref{sys}. 
	Then the order zero terms are a function of $(d_0, t_0)$, namely
	$$\begin{aligned}&\mathtt A+\e^{\theta}k \left(	\frac{\alpha_N}{2^{N-1}} \mathtt C\frac{d_0^{N-2}}{t_0^{N-2}}-\mathtt B d_0^2-\mathtt C d_0^{\frac{N-2}{2}}t_0  \partial_\nu u_0(\xi_0)\right)+k \e^{\theta+\sigma}\mathfrak f(d_0, t_0)
		\\&=\mathtt A+\e^\theta k\mathfrak g(d_0, t_0)+ k \e^{\theta+\sigma}\mathfrak f(d_0, t_0).
	\end{aligned}$$
	Now the second order terms are
	$$\begin{aligned}&-\e^{1+2\hat\alpha}\mathtt B\sum_i d_i^2+\e^{\alpha(N-2)-\beta N+2\tilde\beta}\frac{\alpha_N(N-2)(N-1)}{2^{N}} \frac{d_0^{N-2}}{t_0^{N}}\mathtt C\sum_i t_i^2\\
		&-\e^{\alpha(N-3)+\hat\alpha-\beta(N-1)+\tilde\beta}\frac{\alpha_N(N-2)^2}{2^{N-1}} \frac{d_0^{N-3}}{t_0^{N-1}}\mathtt C\sum_i t_id_i\\
		&+\e^{\alpha(N-4)+2\hat\alpha-\beta(N-2)}\frac{\alpha_N(N-2)(N-3)}{2^{N}} \frac{d_0^{N-4}}{t_0^{N-2}}\mathtt C\sum_i d_i^2-\e^{\alpha(N-2)-\hat\beta(N-2)}\alpha_N \mathtt C\sum_{h<i}\frac{d_0^{N-2}}{|\tau_i-\tau_h|^{N-2}}\\
		&-\frac{\mathtt C}{ 2} \e^{\alpha\frac{N-2}{2}+\beta+2\hat\beta}d_0^{\frac{N-2}{2}}t_0\sum_i (D_{N-1}^2\partial_\nu u_0(\xi_0)\tau_i)^T\cdot \tau_i\\
		&-\mathtt C\frac{N-2}{2} \e^{\alpha\frac{N-2}{2}+\hat\alpha-\alpha+\tilde\beta}d_0^{\frac{N-4}{2}}\partial_\nu u_0(\xi_0)\sum_i d_it_i\\
		&-\mathtt C \frac{(N-2)(N-4)}{4}\e^{\alpha\frac{N-2}{2}+2\hat\alpha-2\alpha+\beta}d_0^{\frac{N-6}{2}}t_0\partial_\nu u_0(\xi_0)\sum_i d^2_i\\
		&=\e^{\hat\theta}\underbrace{\left(-\mathtt B +\frac{\alpha_N(N-2)(N-3)}{2^N}\frac{d_0^{N-4}}{t_0^{N-2}}\mathtt C-\mathtt C \frac{(N-2)(N-4)}{4}d_0^{\frac{N-6}{2}}t_0\partial_\nu u_0(\xi_0)\right)}_{:=\mathfrak A}\sum_i d_i^2\\
		&+\e^{\hat\theta}\underbrace{\left(-\frac{\alpha_N(N-2)^2}{2^{N-1}}\frac{d_0^{N-3}}{t_0^{N-1}}\mathtt C-\mathtt C\frac{N-2}{2}d_0^{\frac{N-4}{2}}\partial_\nu u_0(\xi_0)\right)}_{:=\mathfrak B}\sum_i t_id_i\\
		&+\e^{\hat\theta}\underbrace{\left(\frac{\alpha_N (N-2)(N-1)}{2^N}\frac{d_0^{N-2}}{t_0^N}\mathtt C\right)}_{:=\mathfrak C>0}\sum_i t_i^2\\
		&+\e^{\hat\theta}\left(-\alpha_N\mathtt C \sum_{h<i}\frac{d_0^{N-2}}{|\tau_i-\tau_h|^{N-2}}-\frac{\mathtt C}{2}d_0^{\frac{N-2}{2}}t_0\sum_i (D_{N-1}^2\partial_\nu u_0(\xi_0)\tau_i)^T\cdot \tau_i \right)
	\end{aligned}$$
	By using the first equation of \eqref{sys} we get
	$$\mathfrak A:= \frac{\alpha_N(N^2-5N+5)}{2^N}\frac{d_0^{N-4}}{t_0^{N-2}}\mathtt C-\mathtt C\frac{(N-2)(N-5)}{4}d_0^{\frac{N-6}{2}}t_0\partial_\nu u_0(\xi_0)>0$$
	while by using the second equation of \eqref{sys} we get
	$$\mathfrak B:=0 .$$
	Then at the end
	$$\begin{aligned}J&_\e(u_0-\sum_i P\Ui)=\mathtt A+\e^{\theta} \mathfrak g(d_0, t_0)+k \e^{\theta+\sigma}\mathfrak f(d_0, t_0)\\
		&+\e^{\hat\theta}\left(\mathfrak A \sum_i d_i^2+\mathfrak C\sum_i t_i^2 -\alpha_N\mathtt C \sum_{h<i}\frac{d_0^{N-2}}{|\tau_i-\tau_h|^{N-2}}-\frac{\mathtt C}{2}d_0^{\frac{N-2}{2}}t_0\sum_i (D_{N-1}^2\partial_\nu u_0(\xi_0)\tau_i)^T\cdot \tau_i\right)\\
		&+\mathcal O\left(\e^{\hat\theta+\sigma}\right).\end{aligned}$$
\end{proof}

Now standard arguments permit us to conclude that if $(\bm d_\eps,\bm t_\eps, \bm \tau_\eps)$ is a critical point of $\tilde J_\eps$, then 
	\[W_{\bm\de_\eps,\bm\xi_\eps}+\phi_{\bm\de_\eps,\bm\xi_\eps}= u_0-\sum_{h=1}^k P\U_{{{\de}_h}_\eps,{{\xi}_h}_\eps}+\phi_{\bm\de_\eps,\bm\xi_\eps}\] is a solution of \eqref{BN} where $\bm \de_\eps$ and $ \bm \xi_\eps$ are as in Section \ref{ans}.

Indeed we can rewrite the problem \eqref{eq pi} as
\begin{equation} \label{cijs}
	u-\i(f(u)+\eps u) = \sum c_i^j P\psi_i^j    
\end{equation} and our goal is to find appropriate parameters $\bm d=(d_1,\cdots,d_k),\bm t=(t_1,\cdots, t_k)\in \RR^k$ and points $\bm \tau=(\tau_1,\cdots,\tau_k)\in(\RR^{N-1})^k$ such that  $c_i^j=c_i^j(\bm d,\bm t, \bm \tau)=0$ for all $i=1,\cdots,k$ and $j=0,\cdots,N$.

\begin{prop}\label{cij null}
	If  for all $i=1,\cdots,k$ and $j=1,\cdots N$
	\begin{equation} \label{hp} DJ_\eps(u)[\partial_{\de_i}u] = DJ_\eps(u)[\partial_{(\xi_i)_j}u] = 0,\end{equation} then $c_i^j=0$.
\end{prop}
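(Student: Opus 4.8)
The plan is to convert the $k(N+1)$ scalar identities in \eqref{hp} into a square linear system for the numbers $c_i^j$ and to recognise its coefficient matrix as an invertible diagonal matrix up to a vanishing perturbation. First I would record the variational identity: for every $v\in\H$,
\[ DJ_\eps(u)[v]=\l u,v\r-\int_\Om f(u)\,v-\eps\int_\Om u\,v=\l\, u-\i(f(u)+\eps u),\,v\,\r, \]
so that, by the very definition of the coefficients $c_i^j$ in \eqref{cijs},
\[ DJ_\eps(u)[v]=\sum_{h=1}^k\sum_{m=0}^N c_h^m\,\l P\psi_h^m,\,v\r\qquad\text{for every }v\in\H. \]
Next I would differentiate $u=u_0-\sum_h P\Uh+\phi_{\bm\de,\bm\xi}$ with respect to the concentration parameters: from the explicit formula \eqref{bubble} one has $\de\,\partial_\de\U_{\de,\xi}=-\psi^0_{\de,\xi}$ and $\de\,\partial_{\xi_j}\U_{\de,\xi}=-\psi^j_{\de,\xi}$ for $j=1,\dots,N$, whence, recalling $P\Uh=\i(\Uh^p)$ and $P\psi_i^m=\i(p\U_i^{p-1}\psi_i^m)$,
\[ \de_i\,\partial_{\de_i}u=P\psi_i^0+\de_i\,\partial_{\de_i}\phi_{\bm\de,\bm\xi},\qquad \de_i\,\partial_{(\xi_i)_j}u=P\psi_i^j+\de_i\,\partial_{(\xi_i)_j}\phi_{\bm\de,\bm\xi}. \]
Here $\phi_{\bm\de,\bm\xi}$ is of class $C^1$ in the parameters, which follows from the uniform contraction principle applied to Proposition \ref{contraction}.

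I would then take $v=\de_i\,\partial_{\de_i}u$ in the $\partial_{\de_i}$-identity of \eqref{hp} and $v=\de_i\,\partial_{(\xi_i)_j}u$ in the $\partial_{(\xi_i)_j}$-identities, obtaining, for all $i=1,\dots,k$ and $j=0,\dots,N$ (writing $\theta_{i0}=\de_i$ and $\theta_{ij}=(\xi_i)_j$),
\[ \sum_{h=1}^k\sum_{m=0}^N c_h^m\,\l P\psi_h^m,\,P\psi_i^j\r=-\de_i\sum_{h=1}^k\sum_{m=0}^N c_h^m\,\l P\psi_h^m,\,\partial_{\theta_{ij}}\phi_{\bm\de,\bm\xi}\r. \]
To control the right-hand side I would differentiate the orthogonality relations $\l\phi_{\bm\de,\bm\xi},P\psi_h^m\r=0$, which hold for every admissible choice of parameters, getting $\l\partial_{\theta_{ij}}\phi_{\bm\de,\bm\xi},P\psi_h^m\r=-\l\phi_{\bm\de,\bm\xi},\partial_{\theta_{ij}}P\psi_h^m\r$; since $P\psi_h^m$ depends only on $(\de_h,\xi_h)$ this vanishes for $h\neq i$, while for $h=i$ a scaling computation together with \eqref{ibound} gives $\|\partial_{\theta_{ij}}P\psi_i^m\|_\H\leq C\de_i^{-1}$. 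Combined with \eqref{phi est} this yields $\de_i\,|\l\phi_{\bm\de,\bm\xi},\partial_{\theta_{ij}}P\psi_i^m\r|\leq C\,\eps^{\frac{\hat\theta}{2}+\sigma}$, so the right-hand side of the system is $o(1)\sum_{h,m}|c_h^m|$. For the left-hand side I would use exactly the estimate already established in the proof of Proposition \ref{prop L inv} (see \eqref{wn0}): $\l P\psi_i^j,P\psi_h^m\r=\mathtt K_j\,\de_{ih}\de_{jm}+o(1)$ with $\mathtt K_j=\int_\R p\,\U_{1,0}^{p-1}(\psi^j)^2>0$, the vanishing of the $i=h,\ j\neq m$ entries being a parity cancellation and the vanishing of the $i\neq h$ entries following from the bubbles concentrating at scale $\de_i\sim\eps^\alpha$ at mutual distance $\sim\eps^{\hat\beta}$ with $\hat\beta<\alpha$ (cf. \eqref{diff xi}). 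Hence the system reads $(M+o(1))\,(c_i^j)=0$ with $M=\mathrm{diag}(\mathtt K_0,\dots,\mathtt K_N)$ repeated $k$ times, an invertible matrix; for $\eps$ small the perturbed matrix is still invertible, and therefore $c_i^j=0$ for all $i=1,\dots,k$, $j=0,\dots,N$.

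The parity cancellations and the bubble-interaction estimates needed on the left-hand side are already contained in the paper, so the routine work is light. I expect the main obstacle to be the two ingredients that make the perturbation term negligible: the $C^1$-dependence of $\phi_{\bm\de,\bm\xi}$ on the parameters (via the uniform contraction principle), and the uniform bound $\|\partial_{\theta_{ij}}P\psi_i^m\|_\H\leq C\de_i^{-1}$. The latter requires differentiating the profile $\U_{\de_i,\xi_i}^{p-1}\psi^m_{\de_i,\xi_i}$ — concentrated at scale $\de_i$ — with respect to $\de_i$ or to a component of $\xi_i$, checking that this costs exactly one power of $\de_i^{-1}$ in the $L^{2N/(N+2)}(\Om)$ norm and that the differentiated profiles still belong to $L^{2N/(N+2)}(\R)$; both are fine since $N>6$. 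Once this is in place, the exact cancellation $\de_i\cdot\|\phi_{\bm\de,\bm\xi}\|_\H\cdot\de_i^{-1}=\O(\eps^{\hat\theta/2+\sigma})$ closes the argument, independently of the fine $\eps$-scales of the construction.
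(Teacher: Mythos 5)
Your proof is correct and follows the same overall strategy as the paper: test the identity $u-\i(f(u)+\eps u)=\sum_{h,m}c_h^m P\psi_h^m$ against the parameter derivatives of $u$, obtain a square linear system for the $c_i^j$, and invert its nearly diagonal coefficient matrix for $\eps$ small. Two of your steps differ from the paper's treatment and are worth comparing. First, you exploit the exact identities $\de_i\,\partial_{\de_i}P\U_i=-P\psi_i^0$ and $\de_i\,\partial_{(\xi_i)_j}P\U_i=-P\psi_i^j$, so that the main part of the matrix is precisely the Gram matrix $\l P\psi_h^m,P\psi_i^j\r$ already controlled in \eqref{wn0}; the paper instead estimates the integrals $\de_i\int_\Om\U_h^{p-1}\psi_h^j\,\partial_{\de_i}P\U_i$ afresh (the constants $b_1,b_2$), which amounts to the same computation. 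Second --- and this is the genuine difference --- you control the contribution of $\partial_{\theta}\phi_{\bm\de,\bm\xi}$ by differentiating the orthogonality constraints $\l\phi_{\bm\de,\bm\xi},P\psi_h^m\r=0$, thereby reducing everything to the already established bound \eqref{phi est} together with the elementary scaling estimate $\|\partial_{\theta}P\psi_i^m\|_\H\lesssim\de_i^{-1}$ (valid since the rescaled differentiated profiles stay in $L^{\frac{2N}{N+2}}(\R)$); the paper instead asserts $\de_i\|\partial_{\de_i}\phi_{\bm\de,\bm\xi}\|_\H=o(1)$ and $\de_i\|\partial_{(\xi_i)_j}\phi_{\bm\de,\bm\xi}\|_\H=o(1)$ via the implicit function theorem and a rerun of the argument of Proposition \ref{prop L inv}, a step it does not carry out in detail. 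Your variant is thus somewhat more economical: both proofs need the $C^1$ dependence of $\phi_{\bm\de,\bm\xi}$ on the parameters, but yours avoids any quantitative estimate on $\|\partial_\theta\phi_{\bm\de,\bm\xi}\|_\H$. The remaining ingredients you invoke (positive diagonal entries $\mathtt K_j=p\int_\R\U_{1,0}^{p-1}(\psi^j)^2$, parity cancellation for $j\neq m$, off-diagonal decay because the centers are at mutual distance $\sim\eps^{\hat\beta}$ much larger than the concentration scale $\de_i\sim\eps^{\alpha}$) are exactly those behind \eqref{wn0}, so the conclusion $c_i^j=0$ follows as you state.
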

\begin{proof}
	It's easy to see that
	\[
		D J_\eps(u)[f]=\langle u-\i (f(u)+\eps)), f \rangle .
	\] Then by \eqref{hp} follows
	\[\begin{cases}
			&\langle u-\i (f(u)+\eps)),  \partial_{\delta_i}u  \rangle = 0 
			\\&  \langle u-\i (f(u)+\eps)), \nabla_{\xi_i} u \rangle = 0      
		\end{cases}
	\]
	and combining it with \eqref{cijs}, for all $i=1,\cdots, k$, $r=1,\cdots,N$ we have \[
		\sum_{h,j} c_h^j \int_\Omega \U_h^{p-1}\psi_h^j \partial_{\delta_i}u =      \sum_{h,j} c_h^j \int_\Omega \U_h^{p-1}\psi_h^j \partial_{(\xi_i)_r}u = 0. 
	\]
	We have the following estimates on the integrals
	\[\begin{aligned}
			&\delta_i \int_\Omega \U_h^{p-1}\psi_h^j \partial_{\delta_i}PU_i = \begin{cases}
				b_1 +{o(1)} &\mbox{ if } i=h, j=0
				\\ {o(1)} &\mbox{ otherwise }
			\end{cases} ; \\
			&\delta_i\int_\Omega \U_h^{p-1}\psi_h^j \partial_{(\xi_i)_r}PU_i = \begin{cases}
				b_2 + o(1) &\mbox{ if } i=h, r=j  \\ o(1) &\mbox{ otherwise } 
			\end{cases}
		\end{aligned}
	\]
	where $b_1,b_2\neq 0$. {Recalling that $\phi$ satisfies \eqref{problem}, by the Implicit Function Theorem we can prove that there exist $\partial_{\delta_i} \phi, \nabla_{\xi_i}\phi \in \H$. In particular using \eqref{L in} and proceeding as in Proposition \ref{prop L inv}, it is possible to show that}
 \begin{equation}
     \delta_i \|\partial_{\delta_i}\phi\|_\H = o(1); \qquad 
     \delta_i \|\partial_{(\xi_i)_j}\phi\|_\H = o(1)
 \end{equation} for all $i=1,\cdots, k$ and $j=1,\cdots,N$. Then
	\begin{align*}
		& \delta_i  \int_\Omega \U_h^{p-1}\psi_h^j \partial_{\delta_i}\phi 
  =  o(1); \\ 
		& \delta_i \int_\Omega \U_h^{p-1}\psi_h^j \partial_{(\xi_i)_r}\phi 
  =o(1).
	\end{align*}
	The linear system in the $c_i^j$'s has the only possible solution  $c_i^j=0$ for all $i=1,\cdots,k$ and $j=0,\cdots,N$.
\end{proof}

Now we are able to conclude our proof.

\begin{proof}[Proof of Theorem \ref{main thm}] 
	By Proposition \ref{prop energy}, we have that 
	\[\begin{aligned}
		\tilde J_\eps (\bm d,\bm t,\bm \tau) 
		&=\e^{\hat\theta} \Phi(\bm\de,\bm t,\bm \tau)+\mathtt A_1
		+\mathcal O\left(\e^{\hat\theta+\sigma}\right) \end{aligned}\]
	where $A_1$ depends only on $(d_0,t_0,\xi_0)$ and 
	\[ \Phi (\bm d,\bm t,\bm \tau)=\mathfrak A \sum_i d_i^2+\mathfrak C\sum_i t_i^2 -\alpha_N\mathtt C \sum_{h<i}\frac{d_0^{N-2}}{|\tau_i-\tau_h|^{N-2}}-\frac{\mathtt C}{2}d_0^{\frac{N-2}{2}}t_0\sum_i  (D_{N-1}^2\partial_\nu u_0(\xi_0)\tau_i)^T\cdot \tau_i .\]
	Since the constants  $\mathfrak A, \mathfrak C$ are strictly positive,  $\bm d=\bm t = (0,\cdots,0) \in \RR^{2k} $ is  the unique critical point for $\Phi$ in $\bm d$ and $\bm t$.
	Since the matrix $D_{N-1}^2 \partial_\nu u_0 (\xi_0) $ is positive definite there also exists a critical point $\bm \tau_0$ in $\bm \tau$. The point $(\bm 0,\bm 0,\bm \tau)$ is a critical point for $\Phi$, which is stable under small perturbation of the function. Hence for all $\eps$ sufficiently small  there exists $(\bm\de_\eps,\bm t_\eps,\bm \tau_\eps)$, satisfying \eqref{par}, critical point for $\tilde J_\eps$, which is close to $(\bm 0,\bm 0,\bm \tau)$.

Now  observe that for all $i=1,\cdots,k$ and $ r=1,\cdots,N-1$
	\begin{align*} \begin{cases}
			&\frac{\partial}{\partial_{\delta_i}} P\U_i = \frac{\partial}{\partial_{d_i}} P\U_i \frac{\partial}{\partial_{\delta_i}} d_i; \\
			& \frac{\partial}{\partial_{(\xi_i)_r}} P\U_i = \frac{\partial}{\partial_{t_i}} P\U_i \frac{\partial}{\partial_{(\xi_i)_r}} t_i + \sum_{r=1}^{N-1}\frac{\partial}{\partial_{(\tau_i)_r}} P\U_i \frac{\partial}{\partial_{(\xi_i)_r}} (\tau_i)_r .
		\end{cases}
	\end{align*} 
	Hence the assumptions of Proposition \ref{cij null} are satisfied and we conclude that $u_\eps= W_{\bm\de_\eps,\bm\xi_\eps}+\phi_{\bm\de_\eps,\bm\xi_\eps}$ is a solution of \eqref{BN}.
\end{proof}

\section*{Data Availability Statements}
All data generated or analysed during this study are included in this article.



\normalcolor

\bibliography{biblio}

\begin{thebibliography}{10}

\bibitem{at-br-pe}
F.~V. Atkinson, H.~Brezis, and L.~A. Peletier.
\newblock Nodal solutions of elliptic equations with critical {S}obolev
  exponents.
\newblock {\em J. Differential Equations}, 85(1):151--170, 1990.

\bibitem{ba-co}
A.~Bahri and J.-M. Coron.
\newblock On a nonlinear elliptic equation involving the critical {S}obolev
  exponent: the effect of the topology of the domain.
\newblock {\em Comm. Pure Appl. Math.}, 41(3):253--294, 1988.

\bibitem{ba-li-rey}
A.~Bahri, Y.~Li, and O.~Rey.
\newblock On a variational problem with lack of compactness: the topological
  effect of the critical points at infinity.
\newblock {\em Calc. Var. Partial Differential Equations}, 3(1):67--93, 1995.

\bibitem{ba-mi-pi}
T.~Bartsch, A.~M. Micheletti, and A.~Pistoia.
\newblock On the existence and the profile of nodal solutions of elliptic
  equations involving critical growth.
\newblock {\em Calc. Var. Partial Differential Equations}, 26(3):265--282,
  2006.

\bibitem{ben-el-pa}
M.~Ben~Ayed, K.~El~Mehdi, and F.~Pacella.
\newblock Blow-up and symmetry of sign-changing solutions to some critical
  elliptic equations.
\newblock {\em J. Differential Equations}, 230(2):771--795, 2006.

\bibitem{bn}
H.~Br\'{e}zis and L.~Nirenberg.
\newblock Positive solutions of nonlinear elliptic equations involving critical
  {S}obolev exponents.
\newblock {\em Comm. Pure Appl. Math.}, 36(4):437--477, 1983.

\bibitem{CFP}
A.~Capozzi, D.~Fortunato, and G.~Palmieri.
\newblock An existence result for nonlinear elliptic problems involving
  critical {S}obolev exponent.
\newblock {\em Ann. Inst. H. Poincar\'{e} Anal. Non Lin\'{e}aire},
  2(6):463--470, 1985.

\bibitem{ca-clapp}
A.~Castro and M.~Clapp.
\newblock The effect of the domain topology on the number of minimal nodal
  solutions of an elliptic equation at critical growth in a symmetric domain.
\newblock {\em Nonlinearity}, 16(2):579--590, 2003.

\bibitem{CSS}
G.~Cerami, S.~Solimini, and M.~Struwe.
\newblock Some existence results for superlinear elliptic boundary value
  problems involving critical exponents.
\newblock {\em J. Funct. Anal.}, 69(3):289--306, 1986.

\bibitem{de-do-mu0}
M.~Del~Pino, J.~Dolbeault, and M.~Musso.
\newblock ``{B}ubble-tower'' radial solutions in the slightly supercritical
  {B}rezis-{N}irenberg problem.
\newblock {\em J. Differential Equations}, 193(2):280--306, 2003.

\bibitem{de-do-mu}
M.~del Pino, J.~Dolbeault, and M.~Musso.
\newblock The {B}rezis-{N}irenberg problem near criticality in dimension 3.
\newblock {\em J. Math. Pures Appl. (9)}, 83(12):1405--1456, 2004.

\bibitem{de-so}
G.~Devillanova and S.~Solimini.
\newblock Concentration estimates and multiple solutions to elliptic problems
  at critical growth.
\newblock {\em Adv. Differential Equations}, 7(10):1257--1280, 2002.

\bibitem{dru}
O.~Druet.
\newblock Elliptic equations with critical {S}obolev exponents in dimension 3.
\newblock {\em Ann. Inst. H. Poincar\'{e} C Anal. Non Lin\'{e}aire},
  19(2):125--142, 2002.

\bibitem{han}
Z.-C. Han.
\newblock Asymptotic approach to singular solutions for nonlinear elliptic
  equations involving critical {S}obolev exponent.
\newblock {\em Ann. Inst. H. Poincar\'{e} C Anal. Non Lin\'{e}aire},
  8(2):159--174, 1991.

\bibitem{ia-pa}
A.~Iacopetti and F.~Pacella.
\newblock A nonexistence result for sign-changing solutions of the
  {B}rezis-{N}irenberg problem in low dimensions.
\newblock {\em J. Differential Equations}, 258(12):4180--4208, 2015.

\bibitem{iava}
A.~Iacopetti and G.~Vaira.
\newblock Sign-changing blowing-up solutions for the {B}rezis-{N}irenberg
  problem in dimensions four and five.
\newblock {\em Ann. Sc. Norm. Super. Pisa Cl. Sci. (5)}, 18(1):1--38, 2018.

\bibitem{yan}
Y.-Y. Li.
\newblock On a singularly perturbed equation with neumann boundary condition.
\newblock {\em Communications in Partial Differential Equations}, 23:487--545,
  1998.

\bibitem{mi-pi}
A.~M. Micheletti and A.~Pistoia.
\newblock On the effect of the domain geometry on the existence of sign
  changing solutions to elliptic problems with critical and supercritical
  growth.
\newblock {\em Nonlinearity}, 17(3):851--866, 2004.

\bibitem{mu-pi}
M.~Musso and A.~Pistoia.
\newblock Multispike solutions for a nonlinear elliptic problem involving the
  critical {S}obolev exponent.
\newblock {\em Indiana Univ. Math. J.}, 51(3):541--579, 2002.

\bibitem{mu-sa}
M.~Musso and D.~Salazar.
\newblock Multispike solutions for the {B}rezis-{N}irenberg problem in
  dimension three.
\newblock {\em J. Differential Equations}, 264(11):6663--6709, 2018.

\bibitem{pi-va1}
A.~Pistoia and G.~Vaira.
\newblock Clustering phenomena for linear perturbation of the {Y}amabe
  equation.
\newblock In {\em Partial differential equations arising from physics and
  geometry}, volume 450 of {\em London Math. Soc. Lecture Note Ser.}, pages
  311--331. Cambridge Univ. Press, Cambridge, 2019.

\bibitem{pi-va2}
A.~Pistoia and G.~Vaira.
\newblock Nodal solutions of the {B}rezis-{N}irenberg problem in dimension 6.
\newblock {\em Anal. Theory Appl.}, 38(1):1--25, 2022.

\bibitem{poho}
S.~I. Poho\v{z}aev.
\newblock On the eigenfunctions of the equation {$\Delta u+\lambda f(u)=0$}.
\newblock {\em Dokl. Akad. Nauk SSSR}, 165:36--39, 1965.

\bibitem{pre2}
B.~Premoselli.
\newblock Towers of bubbles for {Y}amabe-type equations and for the
  {B}r\'{e}zis-{N}irenberg problem in dimensions {$n\geq 7$}.
\newblock {\em J. Geom. Anal.}, 32(3):Paper No. 73, 65, 2022.

\bibitem{rey1}
O.~Rey.
\newblock The role of the {G}reen's function in a nonlinear elliptic equation
  involving the critical {S}obolev exponent.
\newblock {\em J. Funct. Anal.}, 89(1):1--52, 1990.

\bibitem{rey2}
O.~Rey.
\newblock Concentration of solutions to elliptic equations with critical
  nonlinearity.
\newblock {\em Ann. Inst. H. Poincar\'{e} C Anal. Non Lin\'{e}aire},
  9(2):201--218, 1992.

\bibitem{ro-ve}
F.~Robert and J.~V\'{e}tois.
\newblock Examples of non-isolated blow-up for perturbations of the scalar
  curvature equation on non-locally conformally flat manifolds.
\newblock {\em J. Differential Geom.}, 98(2):349--356, 2014.

\bibitem{sa-te}
J.-C. Saut and R.~Temam.
\newblock Generic properties of nonlinear boundary value problems.
\newblock {\em Comm. Partial Differential Equations}, 4(3):293--319, 1979.

\bibitem{thizy-ve}
P.-D. Thizy and J.~V\'{e}tois.
\newblock Positive clusters for smooth perturbations of a critical elliptic
  equation in dimensions four and five.
\newblock {\em J. Funct. Anal.}, 275(1):170--195, 2018.

\bibitem{va}
G.~Vaira.
\newblock A new kind of blowing-up solutions for the {B}rezis-{N}irenberg
  problem.
\newblock {\em Calc. Var. Partial Differential Equations}, 52(1-2):389--422,
  2015.

\end{thebibliography}
\bibliographystyle{abbrv}
 
\end{document}